\documentclass[10pt]{amsart}
\usepackage{latexsym,amsfonts,amssymb,amsthm,amsmath,mathrsfs,color,amscd,graphicx,fullpage,parskip,hyperref,bm,bbm,dsfont}

\usepackage{xcolor}

\usepackage{comment}
\includecomment{uncomment}
\usepackage{commath,mathtools,setspace}
\usepackage{paralist}
\usepackage{extarrows}

\excludecomment{comment} 

\newcommand{\s}[1]{{\mathcal #1}}
\newcommand{\sr}[1]{{\mathscr #1}}
\newcommand{\bb}[1]{{\mathbb #1}}

\newcommand{\ip}[2]{\left\langle #1,#2 \right\rangle}

\newcommand{\firststep}{\setcounter{step}{1}\textbf{Step \arabic{step}.} }
\newcommand{\nextstep}{\stepcounter{step}\textbf{Step \arabic{step}.} }

\DeclareMathOperator{\argmin}{argmin}


\newcommand{\sP}{{\mathscr P}}

\newcommand{\mres}{\mathbin{\vrule height 1.6ex depth 0pt width
0.13ex\vrule height 0.13ex depth 0pt width 1.3ex}}


\newcommand{\bH}{{\mathbb H}}

\newcommand{\R}{{\mathbb R}}

\DeclareMathOperator{\spt}{spt}
\DeclareMathOperator{\range}{range}
\newcommand{\ds}{\displaystyle}

\usepackage{mathtools}

\newtheorem{theorem}{Theorem} 

\newtheorem{lemma}[theorem]{Lemma}
\newtheorem{proposition}[theorem]{Proposition}

\newtheorem{definition}[theorem]{Definition}

\newtheorem{remark}[theorem]{Remark}

\numberwithin{equation}{section}
\numberwithin{theorem}{section}

\newcounter{step}
\setcounter{step}{1}

\begin{document}
	
	\title[Monotonicity conditions for MFG]{On monotonicity conditions for Mean Field Games}
	
	\author{P.~Jameson Graber}
	\address{J. Graber: Baylor University, Department of Mathematics;\\
		Sid Richardson Building\\
		1410 S.~4th Street\\
		Waco, TX 76706\\
		Tel.: +1-254-710- \\
		Fax: +1-254-710-3569 
	}
	\email{Jameson\_Graber@baylor.edu}
	
	\author{Alp\'ar R. M\'esz\'aros}
	\address{A.R. M\'esz\'aros: Department of Mathematical Sciences, Durham University, Durham DH1 3LE, United Kingdom}
	\email{alpar.r.meszaros@durham.ac.uk} 
	
	\date{\today}   
	
	\begin{abstract}
	In this paper we propose two new monotonicity conditions that could serve as sufficient conditions for uniqueness of Nash equilibria in mean field games. In this study we aim for \emph{unconditional uniqueness} that is independent of the length of the time horizon, the regularity of the starting distribution of the agents, or the regularization effect of a non-degenerate idiosyncratic noise. Through a rich class of simple examples we show that these new conditions are not only in dichotomy with each other, but also with the two widely studied monotonicity conditions in the literature, the Lasry--Lions monotonicity and displacement monotonicity conditions. 
	\end{abstract}
	
	\keywords{mean field games; monotonicity conditions; uniqueness of solutions; non-uniqueness of solutions}

	\maketitle
	
	
\section{Introduction}

Mean field games were introduced in the seminal papers \cite{lasry07,huang2006large} to model Nash equilibria among a continuum of players in stochastic or deterministic differential games.
A fundamental mathematical question is whether such equilibria are unique.
Lasry and Lions introduced a convenient criterion, referred to as the \emph{Lasry--Lions monotonicity condition}, that can guarantee uniqueness of equilibria for a large class of mean field games.
Since then, the Lasry--Lions (LL) monotonicity condition has been the most popular criterion used to establish uniqueness in the mathematical literature on mean field games.
Conversely, various non-uniqueness results have been recently obtained in the literature in the absence of LL monotonicity on the data (see \cite{bardi2019on, briani2018stable, cirant2019on}). 
Indeed, the state of the art circa 2019 could be justly represented by the following comment by Cardaliaguet and Porretta in their notes on mean field games \cite[Section 1.3.2]{cardaliaguet2020intro}:
\begin{quotation}
	``Let us mention that there are no general criteria for the uniqueness of solutions to (1.29) [a generic mean field game system] in arbitrary time horizon $T$, except for the Lasry--Lions monotonicity condition\dots''
\end{quotation}

Nevertheless, more recently, several other authors have brought to light another criterion known as \emph{displace\-ment monotonicity} \cite{ahuja2016wellposedness,bensoussan2019stochastic, bensoussan2020control, gangbo2020global,gangbo2021mean,meszaros2021mean}, which is a sufficient condition on the data to ensure uniqueness of Nash equilibria for a general class of games. Importantly, displacement monotonicity and LL monotonicity are in dichotomy, meaning that neither one necessarily implies the other (see \cite{gangbo2020global,gangbo2021mean}). These findings revealed that not only the LL monotonicity condition is not a necessary condition regarding uniqueness issues for mean field games, but there might be other regimes of sufficient conditions on the data which could ensure the uniqueness of Nash equilibria.

Given this brief history, a natural question arises:
\begin{quotation}
	Are there other monotonicity conditions under which uniqueness of the mean field equilibrium is guaranteed?
\end{quotation}
The main purpose of this note is to provide a strong affirmative answer to this question.
In what follows, we establish two new general monotonicity conditions, in addition to LL and displacement monotonicity, such that all four types of conditions are in dichotomy with one another--none of them implies any of the others.
We do not claim that these four conditions constitute a definitive list.
On the contrary, we conjecture that other general criteria guaranteeing the uniqueness of equilibria wait to be discovered. Throughout the paper we will consider deterministic problems. However, we expect the newly proposed monotonicity conditions to translate naturally to models subject to noise, thought not without additional challenges (see Remark \ref{rem:deterministic}). These issues will be the subject of future research.

In this paper, by ``uniqueness'' we generally mean that the Nash equilibrium is unique for \emph{arbitrary initial measures and arbitrary time horizons}, without any particular help from the regularization effect of idiosyncratic noise. In particular, our focus here will always be on deterministic models. It is well-known that uniqueness tends to hold under more or less arbitrary structural assumptions on the data, so long as the time horizon is sufficiently small (\cite{carmona2018probabilisticII,cirant2020short,gangbo2015existence,mayorga2019short}).
By contrast, in this investigation, we will always insist that the time horizon be arbitrarily large.
We may occasionally restrict our attention to only certain classes of initial measures--for example, those having an upper bound on one or more of their moments--but generally speaking, ``uniqueness'' is taken to be synonymous with ``unconditional uniqueness.''
In this setting, it is natural to appeal to monotonicity as a general criterion for uniqueness.
Indeed, mathematical analysis offers virtually no other generic tools to guarantee uniqueness of a fixed point.
Nevertheless, for a sufficiently rich mathematical model, we can always ask, \emph{``Monotone in what sense?''}
This will be the crucial point in our investigation.
It turns out that a mean field game can be construed as a fixed point problem in several alternative parameter spaces, all of which involve radically different sensitivity analyses.
By turning our attention to each parameter separately, we derive new monotonicity conditions to establish uniqueness.

\subsection*{Our contributions in this paper}

The main results of this paper are as follows.
In Section \ref{sec:4 conditions}, we state four types of monotonicity conditions on the data, two of which, labeled \eqref{Sigma} and \eqref{L2L2}, are new. The main idea behind the condition \eqref{Sigma} is as follows. If we assume that the measure dependence of the data (the Lagrangian and final cost functions) has a specific factorization, we can rewrite the classical fixed point formulation of the mean field game, using the new structure imposed by this factorization. This will then lead to a new fixed point formulation of the game \emph{not} in the space of probability measures, but rather in new parameter spaces given by this factorization. This idea was initiated in our parallel work \cite{graber2022conservation}, where our motivation was to find new quantities that are transported through the flow of the feedback strategies in the corresponding mean field games, which in particular gave a new perspective in the study of the associated master equations. In specific situations, the condition \eqref{Sigma} reduces to \eqref{sigma}. This reduction happens when the Lagrangian function does not depend on the measure variable, and so the mean field interaction in the game is though the final data only.
Condition \eqref{L2L2} was initially inspired by \eqref{Sigma}, but in fact it can be seen as more natural: rather than factorizing through an arbitrary parameter space, we replace measures with representative $L^2$ random variables, on which monotonicity has a clear interpretation thanks to the inner product.

The main results of Section \ref{sec:uniqueness} are two-fold. On the positive side, we establish that both of our newly proposed monotonicity conditions can be used to prove uniqueness under suitable hypotheses on the Hamiltonian and final datum. In the same time, we revisit the sufficiency of the LL and displacement monotonicity conditions in connection with the question of uniqueness of Nash equilibria. On the negative side, we point to counterexamples showing that the Lasry--Lions monotonicity condition and \eqref{sigma},  in general do not necessarily provide uniqueness of Nash equilibria in mean field games.

Let us give some comments regarding the philosophy behind these negative results. First, the Lasry--Lions monotonicity condition seems to be inspired from a PDE analysis perspective, i.e.~characterizing the Nash equilibrium via the solution to the coupled system of Hamilton--Jacobi and Fokker--Planck equations. More precisely, its essence relies in a procedure of ``cross-multiplying the variables and integrating by parts''. This procedure can be carried out successfully, as long as we have enough regularity to justify the formal computations. This is typically the case in the presence of the regularizing effect of a non-degenerate idiosyncratic noise. However, as long as the solutions are too weak (which is the case for instance when the intensity of the noise is degenerate), such a method breaks down, and there is a good reason to believe that uniqueness might fail. Instead of the presence of a non-degenerate idiosyncratic noise, an alternative framework which yields the sufficiency of the LL monotonicity condition for uniqueness of MFG Nash equilibria (as explained in \cite[Section 3.4]{carmona2018probabilistic}) is the uniqueness of optimizers in optimal control problems for representative agents. Such a condition (independently of the length of the time horizon, and in case when the dynamics of the representative agents are given by a linear control system) can be morally guaranteed only in the case of fully convex problems, i.e. when the Lagrangian is jointly convex in the position and velocity variables, while the final datum is convex in the position variable (cf. Theorem \ref{thm:nonunique optimizers}). However, such convexity assumptions together with the LL monotonicity will enforce the final datum to become displacement monotone (see the discussion in \cite[Remark 2.8]{gangbo2021mean}). It is worth mentioning that displacement monotonicity of non-separable Lagrangians implies in particular that the Lagrangian is jointly convex in the position and velocity variables (see \cite[Lemma 2.5]{meszaros2021mean}). Therefore the uniqueness of optimizers in the optimal control problems for representative agents is philosophically more related to the displacement monotonicity of the Hamiltonian and the final datum.

A general phenomenon which underlies both the LL monotonicity and \eqref{Sigma} is that these monotonicity conditions typically ensure uniqueness of the optimal value of the game, i.e.~the value function; however, uniqueness of Nash equilibria (i.e.~the flow of measures describing the distributions of the agents) in general does not follow from uniqueness of the value function, unless the cost functional is \emph{injective on the set of Nash equilibria}. Such non-injectivity properties pose indeed great issues for instance in the case when the value function fails to be differentiable, and so there are potentially multiple optimal feedback strategies. From the PDE perspective, it is again at such scenarios when the above described method fails. Interestingly, both the displacement monotonicity and the condition \eqref{L2L2} prevent such non-injectivity issues, and if they are present, the uniqueness of Nash equilibria is a generic property. Again, philosophically, the LL monotonicity condition is intimately linked to PDE arguments, the displacement monotonicity condition is strongly connected to optimal control arguments, while the monotonicity conditions \eqref{Sigma} and \eqref{L2L2} are inspired from game theoretic arguments.

The next natural question which arises in this study is whether there are any possible connections between all these monotonicity conditions. For our main results in Section \ref{sec:no implications}, we show that all four monotonicity conditions are completely distinct: there does not exist any necessary implication from one to another (although we give some examples for which two or more of these monotonicity conditions may hold at the same time).
Taken together, these results establish that the uniqueness question in mean field games is significantly richer than has been previously shown in the literature, and we believe they provide a starting point for new avenues of research.

\section{The setting} \label{sec:setting}

Let us introduce first some notations and preliminaries that will be used throughout the paper. 
	If $\s{X}$ and $\s{Y}$ are topological spaces, then for a Borel measurable map $f:\mathcal{X} \to \mathcal{Y}$ and a measure $m$ supported on $\s{X}$, we denote by $f_\sharp m$ the push-forward measure supported on $\s{Y}$ given by the relation $(f_\sharp m)(E) := m\del{f^{-1}(E)}$.	
	For $p\ge 1$, we use the notation $\sP_p(\R^d)$ to denote the space of nonnegative Borel probability measures supported in $\R^d$ with finite $p-$order moments. On $\sP_p(\R^d)$ we define the standard $p-$Wasserstein distance $W_p:\sP_p(\R^d)\times\sP_p(\R^d)\to[0,+\infty)$ as,
	$$W_p(\mu,\nu):=\inf\left\{\int_{\R^d\times\R^d}|x-y|^p\dif{\gamma}:\ \ \gamma\in\sP_p(\R^d\times\R^d),(p^1)_\sharp\gamma=\mu,(p^1)_\sharp\gamma=\mu\right\}^{\frac1p}.$$
Classical results imply (cf. \cite{ambrosio2008gradient}) that there exists at least one optimizer $\gamma$ in the previous problem. We denote by $\Pi_o(\mu,\nu)$ the set of all optimal plans $\gamma$.
	
	Let $(\Omega,\s{A},\bb{P})$ be an atomless probability space. We use the notation $\bb{H}:=L^2(\Omega;\R^d).$ It is a well-known result that if $\bb{P}$ has no atoms, then for each $m \in \sP_2(\bb{R}^d)$ there exists $X \in \bb{H}$ such that $X_\sharp \bb{P} = m$.  In this case, $m$ is the law of the random variable $X$ and we use the notation $m=\mathcal{L}_X$.
	
	Using the terminology from \cite{ambrosio2008gradient}(see also \cite[Chapter 5]{carmona2018probabilistic}), we say that a function $U:\sP_2(\R^d)\to\R$ has a {\it Wasserstein gradient} at $m\in\sP_2(\R^d)$, if there exists $D_m U(m,\cdot)\in \overline{\nabla C_c^\infty(\R^d)}^{L^2_m}$ (the closure of gradients of $C_c^\infty(\R^d)$ function in $L^2_m(\R^d;\R^d)$) such that  for all $m'\in\sP_2(\R^d)$ in any small neighborhood of $m$ we have the first order Taylor expansion
	$$
	U(m') = U(m) +\iint_{\R^d\times\R^d} D_mU(m,x)\cdot(y-x)\dif \gamma(x,y) + o(W_2(m,m')),\ \forall\gamma\in\Pi_o(m,m').
	$$
We say that $U$ is {\it differentiable on $\sP_2(\R^d)$} if its Wasserstein gradient exists at any point.
	
For $U:\sP_2(\R^d)\to\R$, we can define its `lift' $\tilde U:\bb{H}\to\R$ by $\tilde U(X):=U(X_\sharp\bb{P})$. By the results from \cite{gangbo2019differentiability} and \cite[Chapter 5]{carmona2018probabilistic} (cf. \cite{lions07}), $U$ is differentiable at $m$, if and only if $\tilde U$ is Fr\'echet differentiable at $X$ for any $X\in\bb{H}$, such that $X_\sharp\bb{P}=m$. In this case we can write the decomposition 
$$
D\tilde U(X) = D_m U(m,\cdot)\circ X\ \ {\rm{in}}\ \bb{H}, \ \ \forall X\in\bb{H}:\ X_\sharp\bb{P}=m,
$$
where $D\tilde U(X)\in\bb{H}$ stands for the Fr\'echet derivative of $\tilde U$ at $X$.

\subsection{Mild solutions to a mean field game}\label{subsec:mild}

Let $T>0$ be a given time horizon, let $L:[0,T]\times\R^d\times\R^d\times\sP_2(\R^d)\to\R$ and $G:\R^d\times\sP_2(\R^d)\to\R$ be given continuous functions. Let $m_0\in\sP_2(\R^d)$ be the initial agent distribution. 

Let $\Gamma:={\rm{AC}}(0,T;\R^d)$ be the space of absolutely continuous curves on $[0,T]$ with values in $\R^d$. For $t\in[0,T]$, $e_t:\Gamma\to\R^d$ stands for the evaluation map, i.e. $e_t(\gamma)=\gamma(t)$.	
	
Here we give a Lagrangian formulation of the mean field equilibrium problem (cf.~\cite[Section 3]{benamou2017variational} in the case of potential mean field games with local couplings and \cite{cannarsa2018existence} in the case of mean field games with state constraints). In this informal discussion, the data $L$ and $G$ are supposed to satisfy suitable assumptions, as imposed in the mentioned references.

We define an equilibrium to be a measure 
$$\eta\in \sP_{m_0}(\Gamma):=\left\{\tilde\eta\in\sP(\Gamma):\ (e_0)_\sharp\tilde\eta=m_0\right\},$$ such that it is supported on a set of curves $\bar\Gamma\subseteq\Gamma$ and the functional $J^\eta:\Gamma\to\R$,
$$
J^\eta[\gamma]:=\int_0^T L(s,\gamma_s,\dot\gamma_s,(e_s)_\sharp\eta)\dif s + G(\gamma_T,(e_T)_\sharp\eta)
$$
satisfies	
$$
J^\eta[\bar\gamma]\le J^\eta[\gamma],\ \ \forall\gamma\in\Gamma,\forall\bar\gamma\in\bar\Gamma,\ \gamma(0)=\bar\gamma(0).
$$

We can recast this definition as a fixed point problem.
The set $\sr{P}_{m_0}(\Gamma)$ denotes the set of all $\eta \in \sr{P}(\Gamma)$ such that $(e_0)_\sharp \eta = m_0$.
For any $\eta \in \sr{P}_{m_0}(\Gamma)$, there exists a unique Borel measurable family of probabilities $\{\eta_x\}_{x \in \bb{R}^d}$ on $\Gamma$ that disintegrates $\eta$ in the sense that
\begin{equation}
	\begin{cases}
		\eta(\dif \gamma) = \int_{\bb{R}^d} \eta_x(\dif \gamma)\dif m_0(x),\\
		\spt(\eta_x) \subset \Gamma[x] \quad m_0-\text{a.e.} \ x \in \bb{R}^d,
	\end{cases}
\end{equation}
where $\Gamma[x] := \cbr{\gamma \in \Gamma : \gamma(0) = x}$.
We define the set-valued map $E: \sr{P}_{m_0}(\Gamma) \to 2^{\sr{P}_{m_0}(\Gamma)}$ by
\begin{equation}
	E(\eta) = \cbr{\hat{\eta} \in \sr{P}_{m_0}(\Gamma) : \spt(\hat{\eta}_x) \subseteq \Gamma^\eta[x] \quad m_0-\text{a.e.}~x \in \bb{R}^d},
\end{equation}
where
\begin{equation}
	\Gamma^\eta[x] := \cbr{\bar\gamma \in \Gamma[x] : J^\eta[\bar\gamma] \leq J^\eta[\gamma], \ \forall \gamma \in \Gamma[x]}.
\end{equation}
An equilibrium corresponding to the initial measure $m_0$ is simply a fixed point of $E$, i.e.~$\eta \in E(\eta)$.

In this manuscript we exploit additional structural assumptions on the terminal cost $G$ and the Lagrangian, namely that for any given curve $(m_s)_{s\in[0,T]}$ in ${\rm{AC}}([0,T];\sr{P}_2(\bb{R}^d))$
\begin{equation}\label{eq:g = G}
	G(x,m_T) = g(x,\Sigma(m)_T) \quad \text{and} \quad L(s,x,v,m) = \ell(s,x,v,\Sigma(m)_s),  
\end{equation}
for some $\Sigma:{\rm{AC}}([0,T];\sr{P}_2(\bb{R}^d)) \to C([0,T];\mathcal{X})$, and $g:\bb{R}^d \times \mathcal{X} \to \bb{R},\ \ell:[0,T]\times\R^d\times\R^d\times\mathcal{X}\to\R,$ where $\mathcal{X}$ is a given Hilbert space. We emphasize that the space $\mathcal{X}$ has nothing to do with the Hilbert space $\bb{H}$ of $L^2$ random variables, defined above. Such structure will let us investigate new monotonicity properties associated to the mean field game, which will be useful to establish uniqueness of Nash equilibria.

This is quite a natural assumption, as in many application the dependence on the measure variables is through a generalized moment, a finite dimensional projection, etc.  A typical example is for instance when $\mathcal{X}=\bb{R}^k$ is a finite dimensional space, $\Sigma(m)_s = \int_{\R^d} \psi(s,x) \dif m_s(x)$, for $s\in[0,T]$ or $\Sigma(m)_s = \int_{\R^d} (\psi(s,\cdot)\star m_s)(x) \dif m_s(x)$, etc. where $\psi:[0,T]\times\bb{R}^d \to \bb{R}^k$ is given. When $\mathcal{X}$ is finite dimensional, such dependence on the measure variable can be also seen as a  sort of ``dimension reduction'', cf.~\cite{lasry2022dimension, NurSau}.

For $\sigma:[0,T]\to\mathcal{X}$ given, we can now consider the cost
$$
J^{\sigma}[\gamma]:=\int_0^T \ell(s,\gamma_s,\dot\gamma_s,\sigma_s)\dif s + g(\gamma_T,\sigma_T),
$$
where if $\Sigma\left(((e_s)_\sharp\eta)_{s\in[0,T]}\right) = \sigma$, then $J^{\sigma}[\gamma] = J^\eta[\gamma]$.
Likewise, we replace $\Gamma^\eta[x]$ with
\begin{equation*}
	\Gamma^{\sigma}[x] := \cbr{\bar\gamma \in \Gamma[x] : J^{\sigma}[\bar\gamma] \leq J^{\sigma}[\gamma] \ \forall \gamma \in \Gamma[x]}.
\end{equation*}
Finally, we replace $E(\eta)$ with the set-valued map $\mathcal{E}:C([0,T];\mathcal{X}) \to C([0,T];\mathcal{X})$ given by
\begin{equation} \label{eq:Escript}
	\mathcal{E}(\sigma) := \left\{\Sigma\del{((e_s)_\sharp \hat \eta)_{s\in[0,T]}}   : \hat{\eta} \in \sr{P}_{m_0}(\Gamma), \ \operatorname{spt}(\hat{\eta}_x) \subseteq \Gamma^{\sigma}[x] \quad m_0-\text{a.e.}~x \in \bb{R}^d\right\}.
\end{equation}

We consider $\sigma=(\sigma_s)_{s\in[0,T]}$ to be an equilibrium provided that $\sigma \in \mathcal{E}(\sigma)$.
If $\sigma \in C([0,T];\mathcal{X})$ is an equilibrium, then $\sigma = \Sigma\left(((e_s)_\sharp\eta)_{s\in[0,T]}\right)$, for some equilibrium $\eta \in \sr{P}_{m_0}(\Gamma)$.
Conversely, if $\eta$ is an equilibrium in $\sr{P}_{m_0}(\Gamma)$ then $\sigma = \Sigma\left(((e_s)_\sharp \eta)_{s\in[0,T]}\right)$ is an equilibrium in $C([0,T];\mathcal{X})$.

At this point, it is worth noticing that the question of uniqueness of mean field game Nash equilibria could be linked to uniqueness of the fixed point of the operator $\mathcal{E}$ defined over $C([0,T];\mathcal{X})$. In particular, as $\mathcal{X}$ is a Hilbert space, for convenience we embed $C([0,T];\mathcal{X})$ into $L^2([0,T];\mathcal{X})$, which is a Hilbert space on its own, and so the existence and uniqueness of a fixed point will be ensured by a particular monotonicity condition in a Hilbert space. This is what we will investigate later.

\begin{remark} \label{rem:deterministic}
	At this point the reader may notice that it is easier to analyze deterministic games from this point of view.
	For a stochastic game, the set-valued maps $E$ and $\s{E}$ would be defined in terms of a stochastic flow rather than a measure on curves.
\end{remark}

Let us present some particular cases now to demonstrate the richness of phenomena generated by the structural assumption described above. If one assumes that the Lagrangian does not depend on the measure and time variables, i.e. 
\begin{equation}\label{L:reduced}
L(t,x,v,m) = L(x,v),
\end{equation} 
then only the final measure $(e_T)_\sharp \eta$ figures into the cost $J^\eta[\eta]$.

Under the condition \eqref{L:reduced}, in the structural assumption \eqref{eq:g = G} only $\Sigma_T$ appears, and so the cost has the form
$$
J^{\sigma}[\gamma]:=\int_0^T L(\gamma_s,\dot\gamma_s)\dif s + g(\gamma_T,\sigma),
$$
where if $\Sigma_T((e_T)_\sharp\eta) = \sigma$, then $J^{\sigma}[\gamma] = J^\eta[\gamma]$.
$\Gamma^{\sigma}[x]$ will have the form
\begin{equation*}
	\Gamma^{\sigma}[x] := \cbr{\bar\gamma \in \Gamma[x] : J^{\sigma}[\bar\gamma] \leq J^{\sigma}[\gamma] \ \forall \gamma \in \Gamma[x]}.
\end{equation*}
Finally, the operator $\mathcal{E}$ reduces 
to $\mathcal{E}_T:\mathcal{X}\to \mathcal{X}$ given by
\begin{equation} \label{eq:E2}
	\mathcal{E}_T(\sigma) := \left\{\Sigma_T\del{(e_T)_\sharp \hat \eta} : \hat{\eta} \in \sr{P}_{m_0}(\Gamma), \ \operatorname{spt}(\hat{\eta}_x) \subseteq \Gamma^{\sigma}[x] \quad m_0-\text{a.e.}~x \in \bb{R}^d\right\}.
\end{equation}
So, again $\sigma\in\mathcal{X}$ is an equilibrium provided that $\sigma \in \mathcal{E}_T(\sigma)$.
If $\sigma \in \mathcal{X}$ is an equilibrium, then $\sigma = \Sigma_T(m)$ for some equilibrium measure $m \in \sr{P}_2(\bb{R}^d)$, and as before $m = (e_T)_\sharp \eta$ for some equilibrium $\eta \in \sr{P}_{m_0}(\Gamma)$.
Conversely, if $\eta$ is an equilibrium in $\sr{P}_{m_0}(\Gamma)$ then $\sigma = \Sigma_T((e_T)_\sharp \eta)$ is an equilibrium in $\mathcal{X}$. In this case, supposing that $\mathcal{X}$ is a Hilbert space will allow us to to use a monotonicity condition relying on the Hilbert space structure.

We may now notice that under the condition \eqref{L:reduced}, not necessarily imposing special factorization via $\Sigma_T$, we are free to redefine the cost as
$$
J^{m}[\gamma]:=\int_0^T L(\gamma_s,\dot\gamma_s)\dif s + G(\gamma_T,m),
$$
where if $(e_T)_\sharp \eta = m$, then $J^m[\gamma] = J^\eta[\gamma]$.
Likewise, we replace $\Gamma^\eta[x]$ with
\begin{equation*}
	\Gamma^{m}[x] := \cbr{\bar\gamma \in \Gamma[x] : J^{m}[\bar\gamma] \leq J^{m}[\gamma], \ \forall \gamma \in \Gamma[x]}.
\end{equation*}
Finally, we replace $E(\eta)$ with the set-valued map $E_T:\sr{P}_2(\bb{R}^d) \to 2^{\sr{P}_2(\bb{R}^d)}$ given by
\begin{equation} \label{eq:E1}
	E_T(m) := \left\{(e_T)_\sharp \hat \eta : \hat{\eta} \in \sr{P}_{m_0}(\Gamma), \ \spt(\hat{\eta}_x) \subseteq \Gamma^{m}[x] \quad m_0-\text{a.e.}~x \in \bb{R}^d\right\}.
\end{equation}
We say that $m \in \sr{P}_2(\bb{R}^d)$ is an equilibrium in its own right provided $m \in E_T(m)$.
To justify this definition, suppose $m \in E_T(m)$.
Then there exists $\hat \eta \in \sr{P}_{m_0}(\Gamma)$ such that $m = (e_T)_\sharp \hat \eta$ and $\operatorname{spt}(\hat{\eta}_x) \subseteq \Gamma^{m}[x] = \Gamma^{\hat \eta}[x]$ for $m_0$-a.e.~$x$, which implies $\hat \eta$ is an equilibrium.
Conversely, if $\eta$ is an equilibrium, then we observe that $m = (e_T)_\sharp \eta \in E_T(m)$.

We will make use of the following ``lifted version'' of $E_T$, namely $\tilde E_T : \bb{H} \to 2^\bb{H}$ given by
\begin{equation} \label{eq:E1hat}
	\tilde E_T(X) := \{Y \in \bb{H} : \s{L}_Y \in E_T(\s{L}_X)\}.
\end{equation}

In fact, one can study the ``lifted version'' of the operator $E$ itself, namely we can introduce $\tilde E: C([0,T];\bb{H})\to 2^{C([0,T];\bb{H})}$ defined as 
\begin{align}\label{eq:Etilde}
\tilde E(X):=\Big\{& Y=(Y_t)_{t\in[0,T]}\in C([0,T];\bb{H}):\\ 
&\nonumber \s{L}_{Y_t} = (e_t)_\sharp\eta, t\in[0,T],\eta\in\sP_{m_0}(\Gamma), \spt(\eta_x)\subseteq\Gamma^X[x],\ m_0-\text{a.e.}~x \in \bb{R}^d\Big\},
\end{align}
where 

we redefine the cost as
$$
J^{X}[\gamma]:=\int_0^T L(s,\gamma_s,\dot\gamma_s,\mathcal{L}_{X_t})\dif s + G(\gamma_T,\mathcal{L}_{X_T}),
$$
where if $(e_t)_\sharp \eta = \mathcal{L}_{X_t}$, then $J^X[\gamma] = J^\eta[\gamma]$.
Likewise, we replace $\Gamma^\eta[x]$ with
\begin{equation*}
	\Gamma^{X}[x] := \cbr{\bar\gamma \in \Gamma[x] : J^{X}[\bar\gamma] \leq J^{X}[\gamma], \ \forall \gamma \in \Gamma[x]}.
\end{equation*}

We can notice that under the assumption \eqref{L:reduced}, the operator $\tilde E$ reduces in fact to the operator $\tilde E_T.$

\subsection{Four types of monotonicity conditions} \label{sec:4 conditions}

We now introduce four monotonicity conditions, whose implications for uniqueness will be explored in this manuscript. Unless specified otherwise, the data functions $G$ and $L$ are supposed to have sufficient regularity (they are at least continuously differentiable). Precise hypotheses will be assumed on them in the statements of the specific results. First, let us recall the {\it Lasry--Lions} and {\it displacement} monotonicity conditions, studied intensively in the literature.
\begin{definition}[Lasry--Lions monotonicity] \label{def:LL}
	We say that condition (LL) holds for the function $G$ provided that $G$ satisfies the Lasry--Lions monotonicity condition, namely
	\begin{equation} \label{LL} \tag{LL}
		\int_{\R^d} \del{G(x,m_1) - G(x,m_2)} \dif\,(m_1 - m_2)(x) \geq 0 \quad \forall m_1, m_2 \in \sr{P}_2(\bb{R}^d).
	\end{equation}
	We say that condition \eqref{LL} holds \emph{strongly} if the inequality is strict whenever $G(\cdot,m_1)$ and $G(\cdot,m_2)$ are not identical. In other words, equality in \eqref{LL} implies that $G(\cdot,m_1)$ and $G(\cdot,m_2)$ are identical.
\end{definition}
This condition was introduced in the seminal paper \cite{lasry07}, and was the first one in the literature that guaranteed uniqueness of sufficiently regular solutions to MFG systems.
 The LL condition for the function $L$ can be stated in an analogous way.
 	Typically, this is done if $L$ can be ``separated'' as follows:
 	\begin{equation*}
 		L(t,x,v,m) = \ell(t,x,v) + f(t,x,m),
 	\end{equation*}
 	so that Definition \ref{def:LL} applies directly to $f(t,\cdot,\cdot)$.
 	We refer to e.g.~\cite[Section 5, condition 3]{cardaliaguet2017mfgcontrols} and to \cite{kobeissi2022mean} for a more general statement for Lagrangians appearing in so-called mean field games of controls; see also the discussion below in Section \ref{sec:example}.

\begin{remark}\label{rmk:trivial}
We would like to emphasize that the strong version of LL monotonicity in Definition \ref{def:LL} does not rule out the scenario when the function $G$ is independent of the measure variable. Indeed, if $G(x,m) \equiv G(x)$, for all $(x,m)\in\R^d\times\sP_2(\R^d)$, we have that the left hand side of \eqref{LL} is zero for any $m_1,m_2$, but then $G(\cdot,m_1)=G(\cdot,m_2)$ for any $m_1=m_2$. In particular, this means that even the constant zero function is strongly LL monotone. Therefore, we say that the strong LL monotonicity condition is non-trivially satisfied provided there exist $m_1,m_2\in\sP_2(\R^d)$ for which $G(\cdot,m_1)$ and $G(\cdot,m_2)$ are not identical.
\end{remark}

\begin{definition}[Displacement monotonicity] \label{def:D}
	We say that condition \eqref{D} holds for the function $G$ provided that $G$ satisfies the displacement monotonicity condition, namely
	\begin{equation} \label{D} \tag{D}
		\bb{E}\sbr{\del{D_x G(X_1,\s{L}_{X_1}) - D_x G(X_2,\s{L}_{X_2})} \cdot (X_1-X_2)} \geq 0 \quad \forall X_1,X_2 \in \bb{H}.
	\end{equation}
\end{definition}
The condition \eqref{D} naturally extends to Hamiltonians which are not necessarily separated (cf. \cite{gangbo2021mean,meszaros2021mean}).

Displacement monotonicity was first considered in the work \cite{ahuja2016wellposedness} (although under a different name) to study the uniqueness of solutions to MFG with common noise. It became evident later in the works \cite{gangbo2020global,gangbo2021mean,meszaros2021mean} that this condition can serve as an alternative sufficient condition both for the uniqueness of solutions to MFG systems and the well-posedness of the corresponding master equations. The discussions in Subsection \ref{subsec:mild} let us define the following monotonicity conditions.

Assuming the decomposition of $G$ and $L$ via $\Sigma$ as in \eqref{eq:g = G}, we can formulate the following condition.

\begin{definition}[Monotonicity condition in $L^2(0,T;\mathcal{X})$] \label{def:Sigma}
	Suppose that \eqref{eq:g = G} holds and 
	$$\Sigma:{\rm{AC}}([0,T];\sr{P}_2(\bb{R}^d)) \to C([0,T];\mathcal{X})$$ is given. Suppose furthermore that $\mathcal{X}$ is a Hilbert space.
	
	We say that the monotonicity condition \eqref{Sigma} holds provided that, for all initial measures $m_0 \in \sr{P}_2(\bb{R}^d)$ and all time horizons $T > 0$, $I - \mathcal{E}$ is a monotone vector field on $\operatorname{range} \Sigma$, i.e.
	\begin{equation} \label{Sigma} \tag{$\Sigma$}
		\langle\tau_1 - \tau_2, \sigma_1-\sigma_2\rangle_{L^2([0,T];\mathcal{X})} \leq \|\sigma_1 - \sigma_2\|_{L^2([0,T];\mathcal{X})}^2 \quad \forall \sigma_1,\sigma_2 \in \operatorname{range} \Sigma, \ \forall \tau_1 \in \mathcal{E}(\sigma_1), \forall \tau_2 \in \mathcal{E}(\sigma_2).
	\end{equation}
	We say that condition \eqref{Sigma} holds strictly if the inequality is strict for $\sigma_1 \neq \sigma_2$.
\end{definition}

\begin{remark}
We remark that the Hilbert space $L^2([0,T];\mathcal{X})$ appearing in condition \eqref{Sigma} could be replaced by other Hilbert spaces, which could appear naturally in particular problems. We refer to the discussion in Subsection \ref{sec:example}, in the case of mean field games of controls.
\end{remark}

\begin{definition}[Monotonicity condition in $L^2(0,T;\bb{H})$] \label{def:L2L2}
We say that the monotonicity condition \eqref{L2L2} holds provided that, for all initial measures $m_0 \in \sr{P}_2(\bb{R}^d)$ and all time horizons $T > 0$, $I - \tilde E$ is a monotone vector field on $\operatorname{range} \tilde E$, i.e.
\begin{equation} \label{L2L2} \tag{${\rm{L}}^2({\rm{L}}^2)$}
		\int_0^T\bb{E}\sbr{(Y^1_t - Y^2_t) \cdot (X^1_t-X^2_2)}\dif{t} \leq \int_0^T\bb{E}\abs{X^1_t - X^2_t}^2\dif{t},
	\end{equation}
$\forall X^1,X^2 \in C([0,T];\bb{H}), \ \forall Y^1 \in \tilde E(X^1), \forall Y^2 \in \tilde E(X^2).$
	We say that condition \eqref{L2L2} holds strictly if the inequality is strict for $X^1 \neq X^2$.
\end{definition}

\begin{remark}
\begin{enumerate}
\item Let us notice that the choice of the inner product in the definition of \eqref{Sigma} is for convenience. As $L^2([0,T];\mathcal{X})$ is Hilbert space, the terminology of \emph{monotonicity} is used in the standard sense of the word, as for operators between Hilbert spaces. This condition could be rephrased in a different non-Hilbertian setting, but we do not want to deviate the attention of the reader from the main message by further technical constructions.
\item We emphasize that the conditions \eqref{Sigma} and \eqref{L2L2} are significantly different from each other, as we will see later. Philosophically, one might be able to say that \eqref{L2L2} is a particular case of \eqref{Sigma}, by setting $\mathcal{X}=\mathbb{H}$. 
As the operation $\bH\ni X\mapsto \mathcal{L}_X$ is not invertible, one would need to define $\Sigma$ as a multivalued operator,  $\Sigma:{\rm AC}^2([0,T];\sP_2(\R^d))\to 2^{C([0,T];\bH)}$ as $\Sigma(m) = \{X\in C([0,T];\bH):\ \mathcal{L}_{X_t}=m_t,\ \forall t\in[0,T]\}.$
In this case, the definition of $\s{E}(\sigma)$ would also need to be modified accordingly. However, the main difference between the conditions \eqref{L2L2} and \eqref{Sigma} is that the former exploits the `full' probability measure, while the latter one regards only at some specific features of the measure in a given parameter space.
\end{enumerate}  
\end{remark}

It is worth mentioning that if $L$ is independent of the measure and time variables, i.e.~if \eqref{L:reduced} holds, then the previous monotonicity conditions may be replaced with the following ones.

\begin{definition}[Monotonicity condition in $\mathcal{X}$] \label{def:sigma}
	Suppose that \eqref{L:reduced} holds and suppose that $\mathcal{X}$ is a Hilbert space. We say that condition \eqref{sigma} holds provided that, for all initial measures $m_0 \in \sr{P}_2(\bb{R}^d)$ and all time horizons $T > 0$, $I - \mathcal{E}_T$ is a monotone vector field on $\operatorname{range} \Sigma_T$, i.e.
	\begin{equation} \label{sigma} \tag{$\sigma$}
		\langle\tau_1 - \tau_2,\sigma_1-\sigma_2\rangle_\mathcal{X} \leq \|\sigma_1 - \sigma_2\|_{\mathcal{X}}^2 \quad \forall \sigma_1,\sigma_2 \in \operatorname{range} \Sigma_T, \ \forall \tau_1 \in \mathcal{E}_T(\sigma_1), \forall \tau_2 \in \mathcal{E}_{T}(\sigma_2).
	\end{equation}
	We say that condition \eqref{sigma} holds strictly if the inequality is strict for $\sigma_1 \neq \sigma_2$.
\end{definition}

\begin{definition}[Monotonicity condition in $\bb{H}$] \label{def:X}
	Suppose that  \eqref{L:reduced} holds (but \eqref{eq:g = G} does not necessarily).
	We say that condition \eqref{X} holds provided that, for all initial measures $m_0 \in \sr{P}_2(\bb{R}^d)$ and all time horizons $T > 0$, $I - \tilde E_T$ is a monotone vector field on $\bb{H}$, i.e.
	\begin{equation} \label{X} \tag{${\rm{L}}^2$}
		\bb{E}\sbr{(Y_1 - Y_2) \cdot (X_1-X_2)} \leq \bb{E}\abs{X_1 - X_2}^2 \quad \forall X_1,X_2 \in \bb{H}, \ \forall Y_1 \in \tilde E_T(X_1), \forall Y_2 \in \tilde E_T(X_2).
	\end{equation}
	We say that condition \eqref{X} holds strictly if the inequality is strict for $X_1 \neq X_2$.
\end{definition}

Some comments about these definitions are in order.

\begin{remark}
\begin{enumerate}
\item Conditions \eqref{L2L2}, \eqref{Sigma} (and \eqref{X} and \eqref{sigma}) are supposed to hold uniformly with respect to the initial measure $m_0$ and the time horizon $T$.
The reason for this is two-fold.
For one, the conditions \eqref{LL} and \eqref{D} are also uniform with respect to these data.
More importantly, in this manuscript we are concerned with \emph{unconditional} uniqueness, i.e.~uniqueness that does not depend on the time horizon or the initial measure.
\item Monotonicity can go either direction, and we could have easily insisted on the opposite sign in conditions \eqref{L2L2}, \eqref{Sigma} (and \eqref{sigma} and \eqref{X}); we will give the names -\eqref{L2L2}, -\eqref{Sigma} (and -\eqref{X} and -\eqref{sigma}) to the analogous conditions with the inequalities reversed.
Formally, conditions \eqref{LL}, \eqref{D}, \eqref{X}, and \eqref{sigma} all have the ``same sign'', as the example below in Section \ref{sec:sigma does not imply others} illustrates.
Even for conditions \eqref{LL} and \eqref{D} it is known that a form of ``anti-monotonicity'' can also lead to uniqueness; see \cite{mou2022mean}.
Note, however, that for these it is not enough to simply ``reverse the sign'', but one must have sufficiently large anti-monotonicity and impose some specific structural assumptions on the Hamiltonian.
\item It is worth noticing that conditions \eqref{L2L2} and \eqref{Sigma} (and \eqref{X} and \eqref{sigma}) take into account the actual game itself, i.e.~monotonicity is imposed \emph{`after'} we have access to the global-in-time optimal response to an arbitrary crowd trajectory. In contrast to this, the conditions \eqref{LL} and \eqref{D} are imposed in some sense \emph{`locally'}, without having access to the global-in-time optimal response. This is one of the major differences between the philosophy behind our newly proposed conditions compared to the existing ones from the literature.
\end{enumerate}
\end{remark}

In the following sections, we will first address the question of uniqueness of MFG equilibria.
Namely, which of these conditions guarantee that the equilibrium measure is unique?
Second, we address the question of the logical connection between different conditions.
We will see that in general, these conditions are independent and do not imply one another.

\section{Monotonicity conditions and uniqueness} \label{sec:uniqueness}

In this section we investigate the following question: in what cases is there at most one MFG Nash equilibrium, i.e. at most one fixed point of $E$? 

We begin with a simple observation: under \eqref{L:reduced}, if \eqref{X} or -\eqref{X} holds strictly, then uniqueness is immediate!
Indeed, this means there is at most one fixed point $X \in \tilde{E}_T(X)$, and $m = \s{L}_X$ is thereby the unique Nash equilibrium.
By the same argument, if \eqref{L2L2}, -\eqref{L2L2} holds strictly, we again have uniqueness.
Examples where \eqref{X} or -\eqref{X} can be checked will be given below in Section \ref{sec:no implications}.
Examples where \eqref{L2L2} or -\eqref{L2L2} holds could be constructed in a similar spirit as in Section \ref{sec:example}.

Next, we observe that, under 
suitable assumptions on the Lagrangian (which, if it depends on the measure variable, satisfies the corresponding displacement monotonicity condition) \eqref{D} also implies uniqueness.
This follows directly from the results of \cite{meszaros2021mean} (see Theorem 4.5 and Corollary 4.6 in this reference).
It is remarkable that the condition need not be strict.
On the other hand, unlike \eqref{L2L2} (or \eqref{X}), a condition of the form -\eqref{D}, i.e.~with the inequality reverse, certainly does not imply uniqueness.
Indeed, to satisfy -\eqref{D}, it would be sufficient to have a function $G(x,m) = G(x)$ that is \emph{concave} with respect to $x$, i.e.~$D_{xx}^2 G(x) \leq 0$, and independent of $m$. Suppose also that \eqref{L:reduced} holds and $L$ is convex in the $v$ variable.
In that case, the cost is independent of the final measure $m$, which implies that $m$ is an equilibrium whenever it is the push-forward of $m_0$ through an optimal flow at time $T$.
However, since $G$ is concave, there can easily be more than one optimal flow, which implies more than one equilibrium.
See the discussion below for more details.

Suppose that \eqref{L:reduced} is in force. The situation is different and quite interesting for conditions \eqref{LL} and \eqref{sigma}.
In the case of Lasry--Lions monotonicity \eqref{LL}, an inspection of the proof of \cite[Theorem 4.1]{cannarsa2018existence}, for example, implies that when \eqref{LL} holds strictly, then the final cost $G(x,m)$ gives the same value for all equilibria $m$.
However, unless $m \mapsto G(x,m)$ is injective on the set of equilibria, this does not imply uniqueness of the equilibrium.
(Here we are referring only to the final measure; let alone the whole flow of measures!)
Likewise, when \eqref{sigma} holds strictly, we see that the parameter $\Sigma_T(m)$ has the same value for all equilibria $m$.
However, unless $m\mapsto\Sigma_T(m)$ is injective on the set of equilibria, this does not imply uniqueness of the equilibrium.
Thus both \eqref{LL} and \eqref{sigma}, when they hold in a strict (or strong) sense, do imply uniqueness of the value function, but not necessarily of the equilibrium distribution. 

As a consequence, we can conclude in particular that even the strong version of the Lasry--Lions monotonicity does not imply in general the uniqueness of MFG Nash equilibria. This observation seems to be new in the literature. Let us comment more on the literature to date regarding the strong version of the LL monotonicity in connection to the uniqueness of MFG Nash equilibria. As \cite[Theorem 1.4]{cardaliaguet2020intro} states, we can see that when non-degenerate idiosyncratic noise is present, under Lasry--Lions monotonicity we have uniqueness of MFG Nash equilibria if {\it either} the data which depend on the measure variable (the running and the final costs) are strongly LL monotone, {\it or} the Hamiltonian is strongly convex in the momentum variable.

 We consider two examples below. In one of them the final datum is nontrivially strongly LL monotone, while the running cost does not depend on the measure (hence trivially strongly LL monotone, cf. Remark \ref{rmk:trivial}). In the second one the running cost is nontrivially strongly LL monotone, while the final datum is trivially strongly LL monotone. In both cases the Hamiltonian is purely quadratic in the momentum variable, and therefore strongly convex in this variable. These examples
would fit into the setting of \cite[Theorem 1.4]{cardaliaguet2020intro}, except that we do not have noise. Therefore, the message we would like to convey is that regularity of the measure variable (for instance, as a consequence of the regularizing effect of the noise), or some other sufficient assumptions (for instance the uniqueness of optimal feedback strategies in the single agent optimization problems) need to be imposed in addition to the strong monotonicity of the running and final costs or strong convexity of the Hamiltonian, otherwise uniqueness of MFG Nash equilibria might fail. For deterministic problems, such an additional sufficient assumption is that the measure component of the MFG system stays essentially bounded, as we can see in \cite[Theorem 1.8]{cardaliaguet2020intro}. In our examples the measure component is in fact a singular measure.

Under \eqref{L:reduced}, these difficulties can be obviated if one assumes \emph{a priori} that the minimizers of the cost functional $J^m$ are unique for each $m$.
Such an assumption makes it almost inevitable that $x\mapsto G(x,m)$ should be convex (independently of $m$), as the following result shows:
\begin{theorem}\label{thm:nonunique optimizers}
	Let $\phi:\bb{R}^d \to \bb{R}$ be continuous, non-constant, and bounded below, and let it have sub-quadratic growth  at infinity, i.e.~assume there exist $C>0$ and $\alpha \in (0,1)$ such that $\phi(x)\le C(1+|x|^{1+\alpha})$  for all $x\in\R^d.$
	Assume $\phi$ is not convex.
	Then there exists $t^* > 0$ such that for every $t \geq t^*$, there exists $x \in \bb{R}^d$ such that $\R^d\ni y\mapsto\frac{\abs{x-y}^2}{2t} + \phi(y)$ has at least two distinct minimizers.
\end{theorem}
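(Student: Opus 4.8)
The plan is to argue by contradiction, exploiting the fact that the inf-convolution of $\phi$ with the quadratic kernel $|\cdot|^2/(2t)$ is precisely the Hopf--Lax/Hamilton--Jacobi solution, and that uniqueness of the minimizer for \emph{every} $x$ and \emph{every} large $t$ would force $\phi$ to be convex. Concretely, set $\phi_t(x) := \inf_{y\in\R^d}\left(\frac{|x-y|^2}{2t}+\phi(y)\right)$. The sub-quadratic growth and lower boundedness of $\phi$ guarantee the infimum is attained (the function $y\mapsto \frac{|x-y|^2}{2t}+\phi(y)$ is continuous and coercive, since $|x-y|^2/(2t)$ dominates $C(1+|y|^{1+\alpha})$ for $|y|$ large), so $\phi_t$ is well-defined and finite everywhere; it is also locally Lipschitz (indeed semiconcave) and bounded below by $\inf\phi$.

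The key step is the following dichotomy. First I would recall (or quickly verify) that $\phi_t \nearrow \phi$ is false in general but $\phi_t(x)\to\phi(x)$ as $t\to 0^+$ for every $x$, while for the \emph{large}-$t$ regime one uses instead the semigroup property and the fact that $\phi_t$ is semiconcave with constant $1/t$. The crucial observation is: if for some fixed $t>0$ the minimizer $y(x)$ is unique for \emph{every} $x\in\R^d$, then $x\mapsto y(x)$ is continuous, $\phi_t$ is differentiable with $\nabla\phi_t(x) = (x-y(x))/t$, and a standard envelope computation shows $\phi_t$ is then $C^{1,1}$; moreover one gets the representation $\phi(y(x)) = \phi_t(x) - \frac{|x-y(x)|^2}{2t}$ and, differentiating the optimality condition, that $\phi$ is differentiable along the range of $y(\cdot)$ with $\nabla\phi(y(x)) = \nabla\phi_t(x)$. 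Now I would let $t\to\infty$: using sub-quadratic growth, for each fixed $x$ the minimizing $y$ must satisfy $|x-y|^2/(2t)\le \phi(x)-\inf\phi$, hence $|x-y(x)| = O(\sqrt{t})$, so $y(x)/t\to 0$ and in fact $y(x) - x = o(t)$; combined with semiconcavity with vanishing constant $1/t$, the functions $\phi_t$ converge locally uniformly to the convex envelope $\phi^{**}$ of $\phi$ (this is the classical fact that the Hopf--Lax semigroup with quadratic Hamiltonian drives any sub-quadratic initial datum to its convex hull as $t\to\infty$).

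To close the argument: if the conclusion of the theorem \emph{failed}, there would be a sequence $t_n\to\infty$ for each of which the minimizer is unique for all $x$; along this sequence each $\phi_{t_n}$ is convex (a semiconcave function that is a locally uniform limit... no — rather: uniqueness of the minimizer for all $x$ at a single $t$ already forces $\phi_t$ to be convex, by the following elementary fact — if $\phi_t$ is not convex at some point then its convex envelope touches it from below at two distinct points generating a nontrivial affine piece, and reading off the corresponding inf-convolution minimizers produces two minimizers for some intermediate $x$). Then $\phi = \lim \phi_{t_n}$ pointwise would be... not quite, since $\phi_t\le\phi$ strictly in general. So instead I would use $\phi_t\to\phi^{**}$ and note $\phi^{**}$ convex together with: since $\phi$ is assumed non-convex, $\phi^{**}\neq\phi$, so there is a point $y_0$ and an affine function $\ell\le\phi$ with $\ell(y_0)<\phi(y_0)$ but $\ell$ touching $\phi^{**}$; for $t$ large the unique minimizer $y(x_0)$ at the point $x_0 := y_0 + t\nabla\ell$ must lie in the contact set of $\ell$ with $\phi$, which (because $\phi$ is non-convex) can be arranged to contain at least two points — contradicting uniqueness. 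The threshold $t^*$ is then whatever value of $t$ makes this construction go through (it exists because the relevant contact/non-convexity data of $\phi$ is fixed and the quadratic term's influence $1/t$ shrinks).

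\textbf{Main obstacle.} The delicate point is making the last step rigorous and \emph{quantitative in $t$}: I must exhibit, for all sufficiently large $t$, a single $x$ admitting two distinct minimizers, rather than merely showing non-convexity of some limiting object. The cleanest route is probably to avoid limits entirely: pick two points $a\neq b$ where $\phi$ lies strictly above an affine function $\ell$ through $\phi(a)$ and $\phi(b)$ — equivalently, where the convex hull of the graph has an edge with endpoints $a,b$ (such an edge exists because $\phi$ is non-convex, continuous, and sub-quadratic, so $\phi^{**}$ is finite and $\neq\phi$, and the sub-quadratic growth guarantees the hull's boundary has genuine edges with finite endpoints rather than escaping to infinity). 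Then for $t$ large enough that the quadratic "penalty" $|x-y|^2/(2t)$ is flat enough to not break the tie, the midpoint-type choice $x = \frac{1}{2}(a+b) + t\,p$, where $p$ is the common slope $\nabla\ell$, makes $y=a$ and $y=b$ both minimizers of $\frac{|x-y|^2}{2t}+\phi(y)$; verifying this and pinning down how large $t$ must be (i.e.\ identifying $t^*$) is the real content, and it hinges on a careful comparison between the strict gap $\phi - \ell$ near $a,b$ and the curvature $1/t$ of the kernel.
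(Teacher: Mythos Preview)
Your proposal has several genuine gaps. The claim that $\phi_t\to\phi^{**}$ as $t\to\infty$ is false: taking $y$ near a minimizer of $\phi$ shows $\phi_t(x)\to\inf\phi$ for every fixed $x$, so the limit is a constant, not the convex envelope. The claim that uniqueness of minimizers for all $x$ forces $\phi_t$ to be convex is also false (it gives only $\phi_t\in C^1$). More seriously for your direct construction, a supporting affine $\ell\le\phi$ with two \emph{finite} contact points need not exist: for $\phi(y)=1-e^{-|y|^2}$ one has $\phi^{**}\equiv 0$, touching $\phi$ only at the origin, so sub-quadratic growth does not rescue this step. And even when such $a,b$ do exist, the choice $x=\tfrac12(a+b)+tp$ does not make $a,b$ minimizers. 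A direct computation shows $a$ is a global minimizer iff $\phi(y)-\ell(y)\ge\tfrac{(y-a)\cdot(b-y)}{2t}$ for all $y$; along the segment from $a$ towards $b$ the right side behaves like $\tfrac{|b-a|}{2t}\,|y-a|$, whereas if $\phi$ is $C^1$ at $a$ (so that necessarily $\nabla\phi(a)=p$) the left side is $o(|y-a|)$, and the inequality fails for every $t$. The true minimizers then sit strictly between $a$ and $b$, and without symmetry there is no reason they tie. (Try $\phi(y)=y^4-y^2$ in $d=1$: your $x=0$ does yield two minimizers, but they are $\pm\sqrt{\tfrac12(1-\tfrac1{2t})}$, not $\pm\tfrac1{\sqrt2}$.)

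The paper's argument is entirely different and bypasses convex-hull considerations. Writing $\tfrac{|x-y|^2}{2t}+\phi(y)=\tfrac{|x|^2}{2t}-\tfrac{x}{t}\cdot y+\psi(y)$ with $\psi(y)=\tfrac{|y|^2}{2t}+\phi(y)$, it reduces to finding $a\in\R^d$ for which $a\cdot y+\psi(y)$ has two minimizers, where $\psi$ is non-convex once $t$ is large. After normalizing so that $\psi(0)=0$ and $\psi(\pm h)<0$ for some $h$, it defines $F_i(a)=\min_{y_i\ge 0}(a\cdot y+\psi(y))-\min_{y_i\le 0}(a\cdot y+\psi(y))$, shows $F:\R^d\to\R^d$ is continuous, monotone in each coordinate, and coercive (the sub-quadratic growth enters precisely here), and applies the Browder--Minty theorem to obtain a zero. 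At that $a$ the half-space minima all coincide with the global minimum; if the corresponding $2d$ minimizers collapsed to a single point it would have to be the origin, contradicting $\psi(\pm h)<0$.
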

See Section \ref{sec:nonunique optimizers} for the proof.
Theorem \ref{thm:nonunique optimizers} proves that the Cauchy problem for the Hamilton--Jacobi equation
\begin{equation}
	\left\{
	\begin{array}{ll}
	\partial_t u + \frac{1}{2}\abs{\nabla u}^2 = 0, & {\rm{in}}\ (0,+\infty)\times\R^d,\\
	u(0,x) = \phi(x), & x\in \R^d.
	\end{array}
	\right.
\end{equation}
necessarily develops a shock at a certain time $t^* > 0$ depending only on $\phi$, where $\phi$ is \emph{any} non-convex function having sub-quadratic growth at infinity.
Although the result is stated only for quadratic Lagrangians, it underscores how common it is to have more than one minimizer for a given optimal control problem.

In the light of these observations, let us now proceed to state precisely our main results concerning uniqueness.
Our first main result establishes the uniqueness of MFG Nash equilibria under our newly proposed monotonicity condition \eqref{Sigma} (or \eqref{sigma}).
\begin{proposition}\label{prop:uniqueness_sigma}
Let $T>0$ be a given arbitrary large time horizon. Suppose that $G:\R^d\times\sP_2(\R^d)\to\R$ and $L:[0,T]\times\R^d\times\R^d\times\sP_2(\R^d)\to\R$ have the decomposition \eqref{eq:g = G} for some given Hilbert space $\mathcal{X}$, $g:\R^d\times\mathcal{X}\to\R$, $\ell:[0,T]\times\R^d\times\R^d\times\mathcal{X}\to \R$ and $\Sigma:{\rm{AC}}([0,T];\sr{P}_2(\bb{R}^d)) \to C([0,T];\mathcal{X})$. Suppose that the strict version of \eqref{Sigma} holds. Last, suppose that $g$ and $\ell$ are such that the Hamilton--Jacobi equation
\begin{equation}\label{eq:HJsigma}
\left\{
	\begin{array}{ll}
	-\partial_t u + h(t,x,-\nabla_x u,\sigma_s) = 0, & {\rm{in}}\ (0,T)\times\R^d,\\
	u(T,x) = g(x,\sigma_T), & x\in \R^d,
	\end{array}
	\right.
\end{equation}
has a unique classical solution for all $\sigma=(\sigma_s)_{s\in[0,T]}$ in $\range(\Sigma)$. Suppose also that the vector field $D_p h(\cdot,\cdot,-\nabla_x u,\sigma)$ has a globally defined flow. Here $h:[0,T]\times\R^d\times\R^d\times\mathcal{X}\to\R$ is defined in the standard way as $h(t,x,p,\sigma):=\sup_{v\in\R^d}\left\{p\cdot v - \ell(t,x,v,\sigma)\right\}$. Then the corresponding mean field game starting at any $m_0\in\sP_2(\R^d)$ has at most one Nash equilibrium. 

Suppose in addition that $L$ satisfies the reduced form \eqref{L:reduced} and that \eqref{sigma} holds in a strict sense.
Then we can again conclude that the corresponding mean field game has at most one Nash equilibrium.
\end{proposition}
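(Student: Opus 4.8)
The plan is to reduce the uniqueness of Nash equilibria to the uniqueness of the fixed point of $\mathcal{E}$ (resp.\ $\mathcal{E}_T$) on the relevant range, exploiting the elementary observation made at the start of Section \ref{sec:uniqueness}: if $I-\mathcal{E}$ is strictly monotone, then $\mathcal{E}$ has at most one fixed point. The subtlety here is that, unlike conditions \eqref{X} and \eqref{L2L2}, strict monotonicity of $\mathcal{E}$ only gives uniqueness of the parameter $\sigma \in \range(\Sigma)$, not of the underlying flow of measures. Hence the real content is showing that the map from equilibria $\eta \in \sr{P}_{m_0}(\Gamma)$ to the parameter $\sigma = \Sigma(((e_s)_\sharp\eta)_s)$ is injective, i.e.\ that $\sigma$ determines the equilibrium. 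This is exactly where the hypotheses on $g$, $\ell$ and the Hamilton--Jacobi equation \eqref{eq:HJsigma} enter.

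First I would argue as follows. Suppose $\eta^1,\eta^2$ are two Nash equilibria starting at $m_0$, and let $\sigma^i := \Sigma(((e_s)_\sharp\eta^i)_s) \in \range(\Sigma)$ for $i=1,2$. By the discussion in Subsection \ref{subsec:mild}, each $\sigma^i$ is a fixed point of $\mathcal{E}$, i.e.\ $\sigma^i \in \mathcal{E}(\sigma^i)$. Applying the strict version of \eqref{Sigma} with $\sigma_1 = \sigma^1$, $\sigma_2 = \sigma^2$, $\tau_1 = \sigma^1 \in \mathcal{E}(\sigma^1)$, $\tau_2 = \sigma^2 \in \mathcal{E}(\sigma^2)$ gives
\begin{equation*}
\|\sigma^1 - \sigma^2\|_{L^2([0,T];\mathcal{X})}^2 = \langle \sigma^1 - \sigma^2, \sigma^1 - \sigma^2\rangle_{L^2([0,T];\mathcal{X})} \leq \|\sigma^1 - \sigma^2\|_{L^2([0,T];\mathcal{X})}^2,
\end{equation*}
which, since the inequality in \eqref{Sigma} is strict for $\sigma^1 \neq \sigma^2$, forces $\sigma^1 = \sigma^2 =: \sigma$. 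The second step is to upgrade this to $\eta^1 = \eta^2$. Both equilibria are optimal for the \emph{same} cost $J^\sigma$, so for $m_0$-a.e.\ $x$ their disintegrations are supported on $\Gamma^\sigma[x]$, the set of minimizers of $J^\sigma$ among curves starting at $x$. Now invoke the hypothesis that \eqref{eq:HJsigma} has a unique classical solution $u = u^\sigma$ and that $D_p h(\cdot,\cdot,-\nabla_x u^\sigma, \sigma)$ has a globally defined flow: by the classical method of characteristics for Hamilton--Jacobi equations, the optimal trajectory from each starting point $x$ is then \emph{unique} — it is the characteristic curve $\dot\gamma_s = D_p h(s,\gamma_s,-\nabla_x u^\sigma(s,\gamma_s),\sigma_s)$, $\gamma_0 = x$, which is well-defined and unique precisely because the flow is globally defined and $\nabla_x u^\sigma$ is $C^1$. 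Hence $\Gamma^\sigma[x]$ is a singleton for $m_0$-a.e.\ $x$, which means the disintegrations $\eta^i_x$ are both the Dirac mass at that single optimal curve, so $\eta^1 = \eta^2$.

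For the last sentence of the statement, the argument is identical but simpler: under \eqref{L:reduced} the game is driven only by the terminal parameter $\sigma = \Sigma_T((e_T)_\sharp\eta) \in \range(\Sigma_T) \subset \mathcal{X}$, the operator $\mathcal{E}$ collapses to $\mathcal{E}_T:\mathcal{X}\to\mathcal{X}$, and strict monotonicity of $I - \mathcal{E}_T$ in the single Hilbert space $\mathcal{X}$ gives, by the same one-line computation, that the terminal parameter is common to all equilibria. One still needs the optimal flow to be unique to conclude $\eta^1=\eta^2$; here the relevant Hamilton--Jacobi equation is the one with Hamiltonian $h(x,p)$ independent of the measure and time (as in Theorem \ref{thm:nonunique optimizers}), and the hypothesis carrying this through is again built into the assumptions inherited from the first part (or, in the autonomous case, should be stated as the analogous requirement that the characteristic flow be globally defined). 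I expect the main obstacle to be bookkeeping the regularity: one must ensure that ``classical solution'' of \eqref{eq:HJsigma} is strong enough that the characteristic ODE makes sense and that \emph{every} minimizer of $J^\sigma$ is a characteristic (not merely that characteristics are minimizers) — this requires the standard verification-theorem argument, using $u^\sigma(0,x) = \inf_{\gamma(0)=x} J^\sigma[\gamma]$ together with the fact that along any minimizer the value is attained, so a competitor trajectory that is not a characteristic would contradict uniqueness of the characteristic through $x$. Everything else is the two-line monotonicity computation above.
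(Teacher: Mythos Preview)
Your proposal is correct and follows essentially the same approach as the paper's proof: establish that any equilibrium $\eta$ induces a fixed point $\sigma$ of $\mathcal{E}$, use strict \eqref{Sigma} to deduce uniqueness of $\sigma$, then use the classical solution of \eqref{eq:HJsigma} together with the globally defined characteristic flow $\dot\gamma_s = D_p h(s,\gamma_s,-\nabla_x u(s,\gamma_s),\sigma_s)$ to conclude that $\Gamma^\sigma[x]$ is a singleton for every $x$, whence the disintegrations $\eta_x$ are Dirac masses and $\eta$ is determined by $\sigma$. Your discussion of the verification step (that every minimizer is a characteristic) is slightly more detailed than the paper, which simply invokes ``uniqueness of characteristics'' under the stated hypotheses; otherwise the two arguments coincide.
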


\begin{remark}
The existence of a mean field game Nash equilibrium can be obtain under mild assumptions on the data $L$ and $G$, and this is relatively well documented in the literature. In the particular setting when the problem \eqref{eq:HJsigma} has a regular enough classical solution, this existence question was established in \cite[Theorem 3.6]{meszaros2021mean}. 
\end{remark}

\begin{proof}[Proof of Proposition \ref{prop:uniqueness_sigma}]
Let $\eta$ be an equilibrium (in particular $(e_0)_\sharp\eta = m_0)$, and set $m_s = (e_s)_\sharp \eta$ and $\sigma_s = \Sigma(m)_s$.
Then it follows that the curve $\sigma$ belongs to $\mathcal{E}(\sigma)$, as defined in \eqref{eq:Escript}, i.e.~$\sigma$ is a fixed point of $\s{E}$.
By the strict monotonicity condition \eqref{Sigma}, this fixed point $\sigma$ is unique.
To establish the uniqueness of the measure $\eta$, it is enough to establish that $\eta$ is completely determined by $\sigma$.
Indeed, by our assumptions, we have the uniqueness of characteristics associated to \eqref{eq:HJsigma}, which are also given by the flow of the vector field $D_p h(\cdot,\cdot,-\nabla_x u,\sigma)$, i.e. 
$$
\left\{
\begin{array}{ll}
\partial_s\gamma_s(x) = D_p h(s,\gamma_s(x),-\nabla_x u(s,\gamma_s(x)),\sigma_s), & s\in(0,T),\\
\gamma_0(x) = x, & x\in\R^d.
\end{array}
\right.
$$
It follows that the set of optimal curves is a singleton, i.e.~$\Gamma^\eta[x]$ consists only of the solution to this initial value problem.
Hence the measures $\eta_x$ are Dirac masses determined entirely by $\sigma$, whereupon $\eta$ itself is determined by $\sigma$, as well.

The case where $L$ satisfies \eqref{L:reduced} and \eqref{sigma} is satisfied is entirely analogous and therefore omitted. Under this reduction, in particular $h$ will also be independent of $t$ and $\sigma$.
\end{proof}

\subsection{An example of data satisfying \eqref{Sigma}} \label{sec:example} Let us pause for a moment to present data that fulfill the monotonicity condition \eqref{Sigma}. As we demonstrate below, our proposed monotonicity condition \eqref{Sigma} is well-suited for a rich class of MFG, including so-called MFG of {\it controls}.

In this example, we take both $x$ and $\sigma$ in $\bb{R}^d$, where $d$ is arbitrary. Let $T>0$ be a time horizon, which can be taken to be arbitrarily long.
	Let us assume that $H,L:(0,T)\times\R^d\times\R^d\to \R$ have the standard connection between them, via the Legendre--Fenchel transform, i.e.
	\begin{equation*}
		H = H(t,p,\pi) = \sup_{v\in\R^d} [v\cdot p - L(t,v,\pi)],
	\end{equation*}
	In the case of both $H,L$, the last coordinate $\pi\in\R^d$ will play the role of a special parameter, that we will describe below.
For simplicity, we will assume that the final cost depends only on the position variable $x$, i.e.~$g:\R^d\to\R$.
	
In this MFG of controls, $x$ denotes the position/state of individual agents. $m_0\in\sP(\R^d)$ represents their initial distribution with respect to their positions, while the flow of measures $m:(0,T)\to\sP_2(\R^d)$ is on the {\it velocity} variable (in the Lagrangian coordinate; i.e. on the controls) of the individual agents. Therefore the parameter curve $\pi:[0,T]\to\R^d$, defined as $\pi(t) = \int_{\R^d}v\dif m_t(v)$ represents the barycenter of these measures, and both the Hamiltonian and Lagrangian depend on this finite dimensional quantity.
	
	Given a trajectory $\pi:[0,T]\to\R^d$, we will define $\sigma:[0,T]\to\R^d$  as $\sigma(t) = \int_0^t \pi(s)\dif s$, which satisfies $\sigma(0)=0$.
	Our main assumption in this subsection will be
	\begin{equation} \label{eq:pi monotonicity}
		\pi \mapsto D_p H(t,p,\pi) - \pi \quad \text{is strictly monotone with respect to } \pi.
	\end{equation}
This monotonicity is understood in the classical sense of vector fields defined on finite dimensional spaces, i.e. for any $t,p\in (0,T)\times\R^d$ fixed we impose
\begin{equation} \label{eq:dpH monotone positive}
	\left[D_p H(t,p,\pi_1) - D_p H(t,p,\pi_2)\right]\cdot (\pi_1-\pi_2)> \abs{\pi_1 - \pi_2}^2,\ \ \forall\ \pi_1,\pi_2\in\R^d, \pi_1\neq \pi_2.
\end{equation}
As in our consideration, the direction of the monotonicity can be swapped, we can assume also the opposite sign instead, i.e. 
\begin{equation}\label{eq:dpH monotone negative}
	\left[D_p H(t,p,\pi_1) - D_p H(t,p,\pi_2)\right]\cdot (\pi_1-\pi_2)< \abs{\pi_1 - \pi_2}^2,\ \ \forall\ \pi_1,\pi_2\in\R^d, \pi_1\neq \pi_2.
\end{equation}	
A typical example for the $H$ would be 
\begin{equation}\label{ex:pi}
H(t,p,\pi)= \sup_{v\in\R^d} \cbr{v \cdot p - \ell\del{v + a(t)\pi}} = \ell^*(p) - a(t)p \cdot \pi,
\end{equation}
where $a:[0,T]\to \bb{R}$ is a smooth curve, and where $\ell:\bb{R}^d \to \bb{R}$ is a given function whose Legendre transform is denoted $\ell^*$.
If $a(t) < -1$, then we have the first inequality \eqref{eq:dpH monotone positive}, while if $a(t) > -1$, we have the second \eqref{eq:dpH monotone negative}.

	For a given path $\pi:[0,T]\to\R^d$ and initial condition $z\in\R^d$, the optimal trajectory $x(t;\pi,z)$ is given by solving
	\begin{equation}
		\dot{x}(t) = D_p H(t,-Dg(x(T)),\pi(t)), \quad x(0) = z.
	\end{equation}
We remark that this is because the classical forward-backward Hamiltonian system simplifies as $H$ does not depend on the position variable.
	
We note that we have an MFG Nash equilibrium $m:(0,T) \to \sP_2(\R^d)$ if and only if by defining $\pi(t) = \int_{\R^d}v\dif m_t(v)$, we have that 
$$
m_t= \dot x(t;\pi,\cdot)_\sharp m_0.
$$	
	
	We then define, for a given initial measure $m_0\in\sP_2(\R^d)$, $\Pi:C((0,T);\R^d)\to C((0,T);\R^d)$ as
	\begin{equation} \label{eq:market function}
		\Pi(\pi)_t = \int_{\R^d} \dot{x}(t;\pi,z)\dif m_0(z).
	\end{equation}
	The MFG equilibrium is now related to the fixed point problem 
	$$\pi = \Pi(\pi).$$

	We can now define $\Sigma: \mathcal{Y}\to\mathcal{Y}$ as 
	\begin{equation*}
		\Sigma(\sigma)_t = \int_{\R^d} x(t;\dot{\sigma},z)\dif m_0(z) - \int_{\R^d}z \dif m_0(z),
	\end{equation*}
	where $\mathcal{Y}:=\left\{\gamma\in C^1((0,T);\R^d):\ \gamma(0)=0\right\}$. The subtraction of the constant $ \int_{\R^d}z \dif m_0(z)$ in the definition of $\Sigma$ is performed in order to ensure that $\Sigma$ maps $\mathcal{Y}$ into itself.
	We realize that $\Pi(\pi)_t = \od{}{t}\Sigma(\sigma)_t$.
	
	To keep this exposition brief, let us assume that $g$ is linear, so that $Dg(x) = c$ is a constant in $\bb{R}^d$; this will allow us to quickly arrive at a result for this class of examples.
	Then \eqref{eq:market function} becomes
	\begin{equation*}
		\Pi(\pi)_t = \int D_p H(t,-c,\pi(t))\dif m_0(z).
	\end{equation*}
	We notice that by \eqref{eq:pi monotonicity},  $\Pi(\pi) - \pi$ is monotone in the sense that
	\begin{equation*}
		\ip{\Pi(\pi_1)_t - \pi_1(t) - \del{\Pi(\pi_2)_t - \pi_2(t)}}{\pi_1(t) - \pi_2(t)} > 0\ \  (\text{or } < 0) \quad \forall t
	\end{equation*}
	hence
	\begin{equation*}
		\int_0^T \ip{\Pi(\pi_1)_t - \pi_1(t) - \del{\Pi(\pi_2)_t - \pi_2(t)}}{\pi_1(t) - \pi_2(t)}\dif t > 0\ \  (\text{or } < 0).
	\end{equation*}
	As a consequence of this, we have that $\Sigma$ is monotone on $\mathcal{Y}$, in the sense of the $H^1([0,T])$ inner product for trajectories beginning at $\sigma(0) = 0$. This corresponds exactly to the monotonicity condition \eqref{Sigma} (where the parameter space $\mathcal{X}=\R^d$ and we have changed the inner product from the $L^2$ inner product to an $H^1$ inner product).
	
	The economic interpretation of \eqref{eq:pi monotonicity} is readily available.
	Consider the case of competitive production of a given resource, cf.~\cite{chan2017fracking,gueant2011mean}.
	The control can be the quantity sold at a given moment, i.e.~the 
	derivative of the stock (state).
	Then $\pi$ would represent the total quantity on offer by the market.
	Condition \eqref{eq:pi monotonicity} simply means that one must sell more if the market has more to offer.
	Assumptions under which such a condition holds can be found, for example, by looking at \cite[Lemma 4.1]{graber2022master} for the scalar case; for the vector case, it is enough to assume it holds in each coordinate.
	
	Let us compare condition \eqref{eq:pi monotonicity}, which leads immediately to the monotonicity \ref{Sigma} in our sense, with the Lasry--Lions type monotonicity studied in \cite{cardaliaguet2017mfgcontrols,kobeissi2022mean}.
	Under our restrictions, this condition would become
	\begin{equation} \label{eq:LL for MFGC}
		\int_{\R^d} \del{L\del{t,v,\int_{\R^d} \tilde{v}\dif \mu_1(\tilde{v})} - L\del{t,v,\int_{\R^d} \tilde{v}\dif \mu_2(\tilde{v})}}\dif\, (\mu_1 - \mu_2)(v) \geq 0
	\end{equation}
	for all measures $\mu_1,\mu_2\in\sP_2(\R^d)$. 
	Equation \eqref{eq:LL for MFGC} is not in general equivalent to \eqref{eq:pi monotonicity}. For instance, in the case of the example \eqref{ex:pi}, we have that 
	$$
	L(t,v,\pi)= \ell\del{v + a(t)\pi},
	$$
from which we get 	
\begin{equation} \label{eq:LL MFGC expression}
\begin{split}
	&\int_{\R^d} \del{L\del{t,v,\int_{\R^d} \tilde{v}\dif \mu_1(\tilde{v})} - L\del{t,v,\int_{\R^d} \tilde{v}\dif \mu_2(\tilde{v})}}\dif\, (\mu_1 - \mu_2)(v)\\
&=\int_{\R^d} \del{\ell\del{v + a(t)\int_{\R^d} \tilde{v}\dif \mu_1(\tilde{v})} - \ell\del{v + a(t)\int_{\R^d} \tilde{v}\dif \mu_2(\tilde{v})}} \dif\, (\mu_1 - \mu_2)(v)
\end{split}\end{equation}

\medskip

{\it Claim.} In general, the expression \eqref{eq:LL MFGC expression} will not always have the same sign for all probability measures $\mu_1$ and $\mu_2$.

{\it Proof of Claim.} To see this, let us assume for simplicity the dimension $d = 1$, $a(t) = 1$ is constant, and let us restrict our attention to measures with $\int_{\R} \tilde{v}\dif \mu_1(\tilde{v}) = 1$ and $\int_{\R} \tilde{v}\dif \mu_2(\tilde{v}) = 0$.
Then the expression reduces to
\begin{equation} \label{eq:LL MFGC expression0}
	\int_{\R} \del{\ell(v + 1) - \ell(v)} \dif\, (\mu_1 - \mu_2)(v).
\end{equation}
We take, for example, $\ell(v) = v^4$.
The expression \eqref{eq:LL MFGC expression0} reduces to
\begin{equation} \label{eq:LL MFGC expression1}
	\int_{\R} \del{4v^3 + 6v^2 + 4v + 1} \dif\, (\mu_1 - \mu_2)(v) = \int_{\R} \del{4v^3 + 6v^2} \dif\, (\mu_1 - \mu_2)(v) + 4.
\end{equation}
We can simplify even further by taking $\mu_2 = \delta_0$, the Dirac mass at zero, so that \eqref{eq:LL MFGC expression1} becomes
\begin{equation} \label{eq:LL MFGC expression2}
	\int_{\R} \del{4v^3 + 6v^2} \dif\mu_1(v) + 4.
\end{equation}
It suffices to show that this may be positive or negative, depending on the choice of $\mu_1$.
Take
\begin{equation}
	\mu_1 = b\delta_x + (1-b)\delta_{\frac{1-bx}{1-b}},
\end{equation}
where $b \in (0,1)$ and $x$ is some real number.
Note that $\int v \dif \mu_1(v) = 1$, as desired.
We also have
\begin{equation}
	\int_{\R} \del{4v^3 + 6v^2} \dif\mu_1(v) = (4x^3 + 6x^2)b + \del{4\del{\frac{1-bx}{1-b}}^3 + 6\del{\frac{1-bx}{1-b}}^2}(1-b),
\end{equation}
which, so long as $b \neq 1/2$, is a cubic polynomial in $x$ whose leading term is computed to be
\begin{equation*}
	4bx^3 - \frac{{4b}^3}{(1-b)^2}x^3 = \frac{{4b}(1-2b)}{(1-b)^2}x^3.
\end{equation*}
As a cubic polynomial can take any value in $\bb{R}$, we deduce that \eqref{eq:LL MFGC expression2} may be positive or negative, as desired.

\medskip
	
It turns out that the LL condition holds in this case for purely quadratic functions, i.e.~ $\ell(v) = \abs{v}^2$. 
In this case, by subtracting the squares and making a cancellation, one finds that the expression \eqref{eq:LL MFGC expression} reduces to
\begin{equation}
	2a(t)\abs{\int_{\R^d} v \dif \del{\mu_1 - \mu_2}(v)}^2,
\end{equation}
which can have a definite sign if $a(t)$ is chosen to be positive or negative.
Because of this fact, one can use the standard PDE approach \`a la Lasry--Lions to prove uniqueness in the study of exhaustible resource models for which the demand schedule is linear and hence the Hamiltonian is purely quadratic, cf.~\cite{graber2022master,graber2021nonlocal} (see also \cite{kobeissi2022mean,graber2021weak,bonnans2019schauder}, where the Lagrangian has a different structure allowing LL monotonicity to hold).
It was noticed in \cite{graber2022master,graber2021nonlocal} that the case of a nonlinear demand schedule makes it considerably more difficult to prove uniqueness, and this was done only under a certain smallness assumption.
Although in the present article we do not wish to address the many technicalities that arise in such models, we believe that in future work the approach proposed here could help generalize those uniqueness results.

\begin{remark}

 Recently, in \cite{mou2022mean-2} the authors introduced monotonicity conditions of displacement type (``Assumption 5.1'') for Hamiltonians appearing in mean field games of controls. In principle, one could check whether our condition \eqref{eq:pi monotonicity} implies Assumption 5.1 in that work. However, this would require introducing a large amount of additional notation and making a number of quite delicate calculations. In our context, one would first need to verify that the fixed point problem
	\begin{equation*}
		\zeta = D_p H\del{t,-\eta,\bb{E}[\zeta]}
	\end{equation*}
	has a unique fixed point in the space of $L^2$ random variables, given an arbitrary $L^2$ random variable $\eta$.
	One can show this much by using condition \eqref{eq:pi monotonicity}.
	After this one would need to define $\zeta$ implicitly as a function of $\eta$ and analyze its derivative in order to check that \cite[Assumption 5.1]{mou2022mean-2} holds; this would require additional smoothness assumptions on $H$ and a lot of subtle implicit differentiation (unless $L$ is, say, quadratic with respect to the velocity).
	Although such a calculation is potentially illuminating, we have chosen not to include it for the sake of brevity.
\end{remark}

\medskip

For the rest of the paper, in order to present the richness of phenomena behind the monotonicity conditions in a simple way, we will assume that the Lagrangian is independent of the time and measure variables, i.e. \eqref{L:reduced} holds. In this context, the monotonicity condition \eqref{Sigma} will simply reduce to the condition \eqref{sigma} and \eqref{L2L2} reduces to \eqref{X}. Furthermore, we restrict our attention to the case $\mathcal{X}=\R^k$, and most often we set $k=1$.

\subsection{Strong Lasry--Lions monotonicity or the strict \eqref{sigma} in general do not imply uniqueness of Nash equilibria}
We now give a family of examples for which \eqref{LL} holds, but uniqueness of MFG Nash equilibria does not hold.
\begin{proposition} \label{pr:LL not unique}
	Let $\phi:\R^d\to\R$ be any positive, bounded, function for which there exists some $x_0$ such that \mbox{$\phi(x_0 + y) = \phi(x_0 - y)$} for all $y$.
Assume, moreover, that \mbox{$\phi(x_0 + z) < \phi(x_0)$} for some $z$.
Define the data as follows:
\begin{equation}
	G(x,m) = \phi(x)\sigma_T(m), \quad \sigma_T(m) = \int_{\R^d} \phi \dif m, \quad L(x,v) = \frac{1}{2}\abs{v}^2.
\end{equation}
Then \eqref{LL} is satisfied strongly for $G$, but uniqueness of the MFG equilibria does not hold for the initial measure $m_0 = \delta_{x_0}$.
\end{proposition}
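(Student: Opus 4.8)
The plan is to prove the two claims separately: ``strong \eqref{LL}'' is essentially a one-line computation, while the failure of uniqueness is the substantive part. Since $G(x,m)=\phi(x)\sigma_T(m)$ with $\sigma_T(m)=\int_{\R^d}\phi\,\dif m\in\R$ (finite because $\phi$ is bounded), I would simply expand, for any $m_1,m_2\in\sP_2(\R^d)$,
\[
\int_{\R^d}\bigl(G(x,m_1)-G(x,m_2)\bigr)\,\dif(m_1-m_2)(x)=\bigl(\sigma_T(m_1)-\sigma_T(m_2)\bigr)\int_{\R^d}\phi\,\dif(m_1-m_2)=\bigl(\sigma_T(m_1)-\sigma_T(m_2)\bigr)^2\ge 0,
\]
so \eqref{LL} holds; equality forces $\sigma_T(m_1)=\sigma_T(m_2)$ and hence $G(\cdot,m_1)=\phi(\cdot)\sigma_T(m_1)=\phi(\cdot)\sigma_T(m_2)=G(\cdot,m_2)$, i.e. \eqref{LL} holds strongly. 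It is non-trivial in the sense of Remark \ref{rmk:trivial}, since for $m_1=\delta_{x_0}$, $m_2=\delta_{x_0+z}$ one has $\sigma_T(m_1)=\phi(x_0)\neq\phi(x_0+z)=\sigma_T(m_2)$ and (as $\phi\not\equiv 0$) $G(\cdot,m_1)\neq G(\cdot,m_2)$.

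For the non-uniqueness I would first translate so that $x_0=0$; then $\phi$ is even. Since $L(x,v)=\tfrac12|v|^2$ is position-independent, for a measure flow with terminal marginal $m$ the optimal curves from $0$ are straight segments whose endpoint minimizes $F_c(y):=\tfrac{|y|^2}{2T}+c\,\phi(y)$, with $c:=\sigma_T(m)=\int_{\R^d}\phi\,\dif m$. Writing $\mathcal{M}(c):=\argmin_{\R^d}F_c$ (nonempty, compact, and symmetric under $y\mapsto -y$ since $F_c$ is even), one checks that the equilibria for $m_0=\delta_0$ are exactly the curve measures induced by probability measures $m$ with $\spt(m)\subseteq\mathcal{M}(\sigma_T(m))$. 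Now fix $T>\tfrac{|z|^2}{2\phi(0)\delta}$, where $\delta:=\phi(0)-\phi(z)>0$. Then $\delta_{x_0}=\delta_0$ is \emph{not} an equilibrium: that would require $0\in\mathcal{M}(\phi(0))$, i.e. $F_{\phi(0)}(0)\le F_{\phi(0)}(z)$, i.e. $\phi(0)\delta\le\tfrac{|z|^2}{2T}$, contradicting the choice of $T$. An equilibrium exists (standard for bounded continuous data; alternatively, $c\mapsto\tfrac{c^2}{2}-\min_y F_c(y)$ is convex and coercive, and a minimizer $c^*$ satisfies $c^*\in[\min_{\mathcal{M}(c^*)}\phi,\max_{\mathcal{M}(c^*)}\phi]$, which yields an equilibrium measure on $\mathcal{M}(c^*)$). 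Fix such an equilibrium $m^*$, with value $c^*:=\sigma_T(m^*)$; since $\delta_0$ is not an equilibrium, $m^*\neq\delta_0$, so there is $w\in\spt(m^*)\subseteq\mathcal{M}(c^*)$ with $w\neq 0$, and hence also $-w\in\mathcal{M}(c^*)$ with $-w\neq w$.

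It remains to convert the reflection symmetry into a second equilibrium. I would pick $\rho\in(0,|w|)$, let $B$ be the open ball of radius $\rho$ about $w$ (so $B\cap(-B)=\emptyset$), set $\mu:=m^*\mres B$ (so $\mu(\R^d)=m^*(B)>0$ and $\spt\mu\subseteq\mathcal{M}(c^*)$), and define $\tilde m:=(m^*\mres B^c)+R_\sharp\mu$, where $R(y):=-y$. Then $\tilde m\ge 0$ is a probability measure, $\spt(\tilde m)\subseteq\mathcal{M}(c^*)$ (since $\spt(R_\sharp\mu)\subseteq -\mathcal{M}(c^*)=\mathcal{M}(c^*)$), and, using $\phi(-y)=\phi(y)$,
\[
\int_{\R^d}\phi\,\dif\tilde m=\int\phi\,\dif m^*-\int\phi\,\dif\mu+\int\phi(-y)\,\dif\mu(y)=\int\phi\,\dif m^*=c^*.
\]
Hence $\tilde m$ is again an equilibrium, and $\tilde m\neq m^*$ because $\tilde m-m^*=R_\sharp\mu-\mu\neq 0$ (its positive and negative parts are carried by the disjoint sets $-B$ and $B$ and have mass $\mu(\R^d)>0$). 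Thus for every sufficiently large $T$ the game started at $m_0=\delta_{x_0}$ has at least two Nash equilibria, so uniqueness (in particular, unconditional uniqueness) fails.

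The step I expect to be the real obstacle is the last one, carried out at the right level of generality: $\phi$ is merely positive, bounded, continuous and symmetric, so nothing can be assumed about the geometry of $\mathcal{M}(c^*)$ (it could be a point, a sphere, a disconnected set, \dots). The robust mechanism is that for large $T$ the optimal response to the radially symmetric ``price'' $c\,\phi(\cdot)$ cannot remain at $x_0$ — this is exactly where the hypothesis $\phi(x_0+z)<\phi(x_0)$ is used — which forces a nonzero point into $\mathcal{M}(c^*)$, after which evenness of $\phi$ lets one redistribute mass between $w$ and $-w$ at no cost and without changing $\sigma_T$. A secondary, routine point that still needs care is the claimed bijection between equilibrium curve-measures $\eta$ and their terminal marginals $m$ (valid here because each optimal terminal point carries a unique optimal curve, the straight segment), as well as the continuity of $\phi$ tacitly needed for the agents' optimal control problem and for $\mathcal{M}(c)$ to be well behaved.
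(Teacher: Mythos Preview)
Your proof is correct, but it takes a genuinely different route from the paper's. The paper bypasses the abstract existence step entirely: it first picks $\tau$ large enough so that $\frac{|z|^2}{2\tau}+\phi(x_0+z)<\phi(x_0)$, then chooses a minimizer $y^*\neq 0$ of $y\mapsto \frac{|y|^2}{2\tau}+\phi(x_0+y)$ (so $-y^*$ is also a minimizer by evenness), and finally \emph{sets} $T:=\tau/\phi(x_0+y^*)$. With this choice, the Dirac masses $\delta_{x_0\pm y^*}$ are each seen to be equilibria by a one-line rescaling, since $\sigma_T(\delta_{x_0\pm y^*})=\phi(x_0\pm y^*)=\phi(x_0+y^*)$ makes the cost $\frac{|x_0-y|^2}{2T}+\phi(y)\phi(x_0+y^*)$ proportional to the very function whose minimizers are $\pm y^*$.

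What each approach buys: the paper's argument is shorter and fully explicit (two Dirac equilibria for one carefully chosen $T$), with no appeal to abstract equilibrium existence or to Danskin-type subdifferential arguments. Your argument is somewhat longer but yields a slightly stronger conclusion (non-uniqueness for \emph{every} sufficiently large $T$, not just one) and a more robust mechanism: once any equilibrium must place mass off the center of symmetry, reflection produces a second one, regardless of the geometry of $\mathcal{M}(c^*)$. Both are valid; the paper's is the cleaner route if you only need one counterexample $T$.
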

\begin{proof}
	To see that \eqref{LL} is satisfied strongly, notice that
\begin{equation}
	\int_{\R^d} \del{G(x,m_1) - G(x,m_2)}\dif\, (m_1-m_2)(x) = \del{\int_{\R^d} \phi(x)\dif\, (m_1-m_2)(x)}^2.
\end{equation}
For the initial measure $m_0 = \delta_{x_0}$, the mean field game boils down to finding measures $m$ such that
\begin{equation}
	\operatorname{spt}(m) \subset \argmin \cbr{\frac{\abs{x_0-y}^2}{2T} + \phi(y)\int_{\R^d} \phi \dif m : y \in \bb{R}^d}.
\end{equation}
Now with $z$ as above, there exists $\tau$ large enough such that $\frac{\abs{z}^2}{2\tau} + \phi(x_0 + z) < \phi(x_0)$.
It follows that we can find some $y^*$ that minimizes $\frac{\abs{y}^2}{2\tau} + \phi(x_0 + y)$, hence so does $-y^*$ because $\phi(x_0 + y^*) = \phi(x_0 - y^*)$. We emphasize that $y^*\neq 0$, since we have $\frac{\abs{z}^2}{2\tau} + \phi(x_0 + z) < \phi(x_0)$.
Set $T = \frac{\tau}{\phi(x_0 + y^*)}$.
Then 
\begin{equation}
	\pm y^* \in \argmin \cbr{\frac{\abs{x_0-y}^2}{2T} + \phi(y)\phi(x_0 \pm y^*) : y \in \bb{R}^d}.
\end{equation}
It follows that $m = \delta_{\pm y^*}$ are both Nash equilibria.
\end{proof}

	The example given in Proposition \ref{pr:LL not unique} is a game where the measure-dependence occurs only in the final cost.
	However, the basic idea holds even when the running cost depends on the measure, as well.
	We give a simple example to illustrate the general idea.
	\begin{proposition}
		\label{pr:LL really not unique}
		Let $\phi:\R^d\to\R$ be given by $\phi(x) = \sqrt{2\abs{x}}$ and set $F(x,m) = \phi(x)\int_{\bb{R}^d} \phi \dif m$.
		Fix a final time $T$.
		Set the final coupling $G(x,m) = -T\abs{x}$, and set the Lagrangian to be
		\begin{equation}
			L(x,v,m) = \frac{1}{2}\abs{v}^2 + F(x,m) = \frac{1}{2}\abs{v}^2 + \phi(x)\int_{\bb{R}^d} \phi \dif m.
		\end{equation}
		Then \eqref{LL} is satisfied strongly for $F$, but uniqueness of the MFG equilibria does not hold for the initial measure $m_0 = \delta_0$.
	\end{proposition}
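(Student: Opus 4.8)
I would split the argument into a trivial half --- verifying that $F$ is strongly Lasry--Lions monotone --- and a substantial half --- a symmetry-and-concavity argument producing several equilibria. For the first half, exactly as in the proof of Proposition~\ref{pr:LL not unique}, since $F(x,m_1)-F(x,m_2)=\phi(x)\bigl(\int_{\R^d}\phi\,\mathrm{d}m_1-\int_{\R^d}\phi\,\mathrm{d}m_2\bigr)$ and $\int_{\R^d}\phi\,\mathrm{d}m<\infty$ for every $m\in\sr{P}_2(\R^d)$ (because $\phi(x)=\sqrt{2|x|}$ grows sublinearly), one gets
$$\int_{\R^d}\bigl(F(x,m_1)-F(x,m_2)\bigr)\,\mathrm{d}(m_1-m_2)(x)=\Bigl(\int_{\R^d}\phi\,\mathrm{d}(m_1-m_2)\Bigr)^2\ge 0,$$
and equality forces $\int\phi\,\mathrm{d}m_1=\int\phi\,\mathrm{d}m_2$, hence $F(\cdot,m_1)\equiv F(\cdot,m_2)$; so \eqref{LL} holds strongly, and non-trivially, for $F$.

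\textbf{Symmetry reduction.} For non-uniqueness I would exploit that every ingredient of the game depends on the state only through $|x|$ --- the kinetic cost $\tfrac12|v|^2$, the coupling $\phi$, and $G(x,m)=-T|x|$ --- while $m_0=\delta_0$ is invariant under $x\mapsto -x$. Consequently the reflection $\iota\colon\Gamma\to\Gamma$, $\iota(\gamma):=-\gamma$, sends equilibria to equilibria: if $\eta$ is an equilibrium with flow $m_s=(e_s)_\sharp\eta$, then $\int_{\R^d}\phi\,\mathrm{d}m_s$ is reflection-invariant, so $J^{\iota_\sharp\eta}=J^\eta$ and $\Gamma^{\iota_\sharp\eta}[0]=\iota(\Gamma^{\eta}[0])$, whence $\iota_\sharp\eta$ is again an equilibrium. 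It therefore suffices to produce one equilibrium $\eta$ with $\iota_\sharp\eta\neq\eta$, i.e. one not carried by the constant curve alone.

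\textbf{Building an asymmetric equilibrium.} By the standard existence theory for deterministic variational mean-field games (a Kakutani fixed-point argument in $\sr{P}_{m_0}(\Gamma)$, the data being continuous of at most quadratic growth) the game has an equilibrium $\eta^\ast$; write $m^\ast_s=(e_s)_\sharp\eta^\ast$ and introduce the scalar ``market'' parameter $c^\ast(s):=\int_{\R^d}\phi\,\mathrm{d}m^\ast_s\ge0$, so that the representative agent at the origin minimizes $\mathcal J_{c^\ast}[\gamma]:=\int_0^T\bigl(\tfrac12|\dot\gamma_s|^2+c^\ast(s)\sqrt{2|\gamma_s|}\bigr)\,\mathrm{d}s-T|\gamma_T|$ over $\gamma(0)=0$ (a functional coercive in $\|\dot\gamma\|_{L^2}$, hence with minimizers, whose minimizer set is $\iota$-symmetric since it depends on $\gamma$ only through $|\gamma_s|$ and $|\dot\gamma_s|$). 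Then $c^\ast\not\equiv0$: otherwise the best responses would be $\gamma_s=\pm Ts\neq0$, which cannot reproduce $m^\ast\equiv\delta_0$. Hence the support of $\eta^\ast$ contains a non-constant curve $\bar\gamma$, and $-\bar\gamma$ is also a minimizer, so $\delta_{\bar\gamma}$ and $\delta_{-\bar\gamma}$ are distinct equilibria --- provided the set of minimizers at level $c^\ast$ reduces, modulo reflection, to a single curve; otherwise a convex-combination argument over that (reflection-symmetric) set yields a whole family $(1-\theta)\delta_{\bar\gamma}+\theta\delta_{-\bar\gamma}$, $\theta\in[0,1]$, all reproducing $c^\ast$ since $|{-\bar\gamma_s}|=|\bar\gamma_s|$.

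\textbf{The main obstacle.} The delicate point is controlling the minimizer set of $\mathcal J_{c^\ast}$: since $q\mapsto c^\ast(s)\sqrt{2q}$ is concave, the agent's problem is \emph{non-convex}, and critical points need not be minimizers. An equilibrium carried by a single interior curve is in fact impossible --- self-consistency $c^\ast(s)=\sqrt{2\bar\gamma_s}$, the Euler--Lagrange equation $\ddot{\bar\gamma}_s=c^\ast(s)/\sqrt{2\bar\gamma_s}=1$, and the transversality condition $\dot{\bar\gamma}_T=T$ force $\bar\gamma_s=s^2/2$, yet this curve is not even a local minimizer of $\mathcal J_{c^\ast}$: along $h_s=s^{1/2+\delta}$ (small $\delta>0$) its second variation is $\int_0^T\bigl(\dot h_s^2-h_s^2/s^2\bigr)\,\mathrm{d}s=\bigl[(\tfrac12+\delta)^2-1\bigr]\,T^{2\delta}/(2\delta)<0$, a Hardy-type obstruction. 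Thus the genuine best responses must activate the state constraint $\gamma\ge0$ (agents rest at the origin on an initial time interval, then depart), and the bulk of the work is to solve this self-consistent free-boundary problem and to prove \emph{global} optimality of its solutions, most naturally through a verification argument for the associated constrained Hamilton--Jacobi equation. Once the minimizer set at $c^\ast$ is identified, the symmetry step above completes the proof.
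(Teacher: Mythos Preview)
Your proof is not complete: you explicitly leave the ``bulk of the work'' --- identifying the global minimizers of the representative-agent problem via a free-boundary analysis and a verification argument --- undone. There is also a loose end in the ``Building an asymmetric equilibrium'' paragraph: in the ``otherwise'' case (minimizer set strictly larger than $\{\pm\bar\gamma\}$), the convex combinations $(1-\theta)\delta_{\bar\gamma}+\theta\delta_{-\bar\gamma}$ need not reproduce $c^*$, since $c^*(s)=\int\phi(\gamma(s))\,\mathrm d\eta^*(\gamma)$ averages over the whole support of $\eta^*$, not just over $\{\pm\bar\gamma\}$.

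The paper takes a completely different and much shorter route: for each unit vector $a$ it sets $\xi(t)=\tfrac{t^2}{2}a$, $m(t)=\delta_{\xi(t)}$, and simply verifies that $\xi$ satisfies the Euler--Lagrange system $\ddot x=D\phi(x)\,\phi(\xi(t))$, $\dot x(T)=T\,x(T)/|x(T)|$. No abstract existence, no symmetry, no free boundary.

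Your ``main obstacle'' section, however, puts its finger on a real problem with the paper's own argument: in this non-convex problem, checking Euler--Lagrange is not the same as checking optimality. Your suspicion can in fact be made rigorous. For $\gamma\ge 0$ in one dimension with $c^*(s)=s$, an integration by parts (using $T\gamma(T)=\int_0^T(\gamma+s\dot\gamma)\,\mathrm ds$ and completing two squares) gives the exact identity
\[
J[\gamma]=\tfrac12\int_0^T\bigl[(\dot\gamma(s)-s)^2-(\sqrt{2\gamma(s)}-s)^2\bigr]\,\mathrm ds,
\]
so $J[\xi]=J[0]=0$, while the ``delayed'' trajectory $\gamma(s)=0$ on $[0,T-\varepsilon]$, $\gamma(s)=T(s-T+\varepsilon)$ on $[T-\varepsilon,T]$ makes the bracket of order $-T^2$ on the last interval and hence $J[\gamma]<0$ for small $\varepsilon>0$. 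Thus $\xi$ is \emph{not} a global minimizer of the agent's problem, $m(t)=\delta_{\xi(t)}$ is not an equilibrium in the sense of Section~\ref{subsec:mild}, and the paper's proof of this proposition has the very gap you diagnose. A complete proof would indeed require the free-boundary/verification analysis you outline but do not carry out.
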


	\begin{proof}
		The proof that \eqref{LL} is satisfied strongly for $F$ is the same argument as in Proposition \ref{pr:LL not unique}.
		To prove that Nash equilibria are not unique for $m_0 = \delta_0$, set
		\begin{equation}
			\xi(t) = \frac{t^2}{2}a, \quad t \in [0,T]
		\end{equation}
		for any fixed unit vector $a \in \bb{R}^d$.
		Set $m(t) = \delta_{\xi(t)}$.
		We claim that $m(t)$ is an equilibrium.
		It suffices to show that $\xi(t)$ is an optimal trajectory for any player starting from $x = 0$ and seeking to minimize
		\begin{equation}
			\int_0^T \del{\frac{1}{2}\abs{\dot{x}(t)}^2 + \phi(x(t))\int_{\bb{R}^d} \phi \dif m(t)}\dif t - T\abs{x(T)},
		\end{equation}
		which, given $m(t) = \delta_{\xi(t)}$, can be written
		\begin{equation}
			\int_0^T \del{\frac{1}{2}\abs{\dot{x}(t)}^2 + \phi(x(t))\phi(\xi(t))}\dif t - T\abs{x(T)}.
		\end{equation}
		The Euler-Lagrange equations for this optimal control problem are
		\begin{equation} \label{eq:E-L xi}
			\ddot{x}(t) = D\phi(x(t))\phi(\xi(t)), \quad \dot{x}(T) = T \frac{x(T)}{\abs{x(T)}}.
		\end{equation}
		Then we observe that $\xi(t)$ itself satisfies \eqref{eq:E-L xi}, because
		\begin{equation}
			D\phi(\xi(t))\phi(\xi(t)) = \frac{1}{\sqrt{2\abs{\xi(t)}}}\frac{\xi(t)}{\abs{\xi(t)}}\sqrt{2\abs{\xi(t)}} = \frac{\xi(t)}{\abs{\xi(t)}} = a = \ddot{\xi}(t)
		\end{equation}
		and
		\begin{equation}
			T \frac{\xi(T)}{\abs{\xi(T)}} = Ta = \dot{\xi}(T).
		\end{equation}
		Since $a$ is arbitrary, the equilibrium is not unique.
	\end{proof}

We turn back now to the setting of Proposition \ref{pr:LL not unique}.
By a similar construction, we can give a family of examples for which both \eqref{LL} and \eqref{sigma} hold, but uniqueness of the equilibrium still does not hold.
\begin{proposition} \label{pr:sigma not unique}
	Assume the dimension $d = 1$.
	Let $\phi$ be as in Proposition \ref{pr:LL not unique}.
	In addition, assume that $\phi$ is decreasing for $x_0 < x < x_0 + z$ and increasing for $x > x_0 + z$ (where $z>0$).
	Then for the initial measure $m_0 = \delta_{x_0}$, \eqref{sigma} holds strictly, and \eqref{LL} is satisfied strongly, but uniqueness does not hold.
\end{proposition}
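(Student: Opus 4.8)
The plan is to verify three statements, only the last of which is genuinely new: (a) $G$ satisfies \eqref{LL} strongly; (b) the game with $m_0=\delta_{x_0}$ has at least two Nash equilibria; and (c) \eqref{sigma} holds strictly. Statement (a) is exactly the computation from Proposition~\ref{pr:LL not unique}: since $G(x,m)=\phi(x)\sigma_T(m)$ with $\sigma_T(m)=\int_\R\phi\,dm$, one has $\int_\R\bigl(G(x,m_1)-G(x,m_2)\bigr)\,d(m_1-m_2)(x)=\bigl(\int_\R\phi\,d(m_1-m_2)\bigr)^2\ge 0$, with equality iff $\sigma_T(m_1)=\sigma_T(m_2)$, i.e.\ iff $G(\cdot,m_1)\equiv G(\cdot,m_2)$ (using $\phi>0$).

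For (b), since the hypotheses of Proposition~\ref{pr:LL not unique} are a subset of ours, non-uniqueness for $m_0=\delta_{x_0}$ follows from that proposition; let me recall the construction, as it also fixes the two equilibria used below. Because $L=\tfrac12|v|^2$, a representative agent facing a fixed final measure $m$ travels along a straight line to a minimizer of $y\mapsto\tfrac{(x_0-y)^2}{2T}+\phi(y)\sigma_T(m)$. Choosing $\tau$ large enough that $\tfrac{z^2}{2\tau}+\phi(x_0+z)<\phi(x_0)$ (possible since $\phi(x_0+z)<\phi(x_0)$), the coercive function $w\mapsto\tfrac{(w-x_0)^2}{2\tau}+\phi(w)$ has minimum strictly below $\phi(x_0)$, hence a minimizer $w^\ast=x_0+y^\ast$ with $y^\ast\ne 0$, and by the symmetry $\phi(x_0+y)=\phi(x_0-y)$ also the minimizer $x_0-y^\ast$. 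Taking $T=\tau/\phi(x_0+y^\ast)$ and rescaling shows $x_0\pm y^\ast\in\argmin_y\bigl(\tfrac{(x_0-y)^2}{2T}+\phi(y)\phi(x_0\pm y^\ast)\bigr)$, so $m=\delta_{x_0+y^\ast}$ and $m=\delta_{x_0-y^\ast}$ are both equilibria for this $T$, and they are distinct since $y^\ast\ne 0$. The extra monotonicity hypotheses on $\phi$ are not strictly needed here, but they are convenient: they confine every minimizer of $w\mapsto\tfrac{(w-x_0)^2}{2\tau}+\phi(w)$ to $[x_0-z,x_0+z]$, making the equilibrium picture transparent.

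For (c), fix an arbitrary $m_0\in\sP_2(\R)$ and $T>0$. For $\sigma\in\R=\mathcal{X}$ set $V_\sigma(x):=\inf_{y\in\R}\bigl(\tfrac{|x-y|^2}{2T}+\sigma\phi(y)\bigr)$ and $A_\sigma(x):=\argmin_{y\in\R}\bigl(\tfrac{|x-y|^2}{2T}+\sigma\phi(y)\bigr)$. Since $\phi$ is bounded this infimum is finite and attained, and since $\sigma\mapsto\tfrac{|x-y|^2}{2T}+\sigma\phi(y)$ is affine, $\sigma\mapsto V_\sigma(x)$ is concave. The set $\Gamma^\sigma[x]$ consists precisely of the straight segments from $x$ to points of $A_\sigma(x)$, so every $\tau\in\mathcal{E}_T(\sigma)$ has the form $\tau=\int_\R\bar\phi_\sigma(x)\,dm_0(x)$ with $\bar\phi_\sigma(x)=\int_{A_\sigma(x)}\phi\,d\kappa_x$ for some probability measure $\kappa_x$ on $A_\sigma(x)$. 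For any $y\in A_\sigma(x)$ and any $\sigma'$, $\tfrac{|x-y|^2}{2T}+\sigma'\phi(y)\ge V_{\sigma'}(x)$, whereas the same expression equals $V_\sigma(x)$ at $\sigma'=\sigma$; eliminating $\tfrac{|x-y|^2}{2T}$ gives $V_\sigma(x)+(\sigma'-\sigma)\phi(y)\ge V_{\sigma'}(x)$, and averaging in $y$ against $\kappa_x$ then integrating in $x$ against $m_0$ yields $W(\sigma')\le W(\sigma)+\tau(\sigma'-\sigma)$ for all $\sigma'$, where $W(\sigma):=\int_\R V_\sigma(x)\,dm_0(x)$. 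Hence every element of $\mathcal{E}_T(\sigma)$ is a supergradient of the concave function $W$ at $\sigma$. Adding the supergradient inequalities at $\sigma_1\ne\sigma_2$ with $\tau_i\in\mathcal{E}_T(\sigma_i)$ gives $(\tau_1-\tau_2)(\sigma_1-\sigma_2)\le 0<(\sigma_1-\sigma_2)^2$, which is precisely strict \eqref{sigma}.

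The only delicate point is the inclusion of $\mathcal{E}_T(\sigma)$ in the superdifferential of $W$ at $\sigma$: an element of $\mathcal{E}_T(\sigma)$ comes from an arbitrary admissible measure concentrated on optimal trajectories (not from a single measurable selection), so one must check that its barycentric form $\tau=\int_\R\bar\phi_\sigma(x)\,dm_0(x)$ with $\kappa_x$ a probability on $A_\sigma(x)$ is legitimate, i.e.\ that $x\mapsto\bar\phi_\sigma(x)$ is $m_0$-measurable and, by boundedness of $\phi$, $m_0$-integrable. Once that is in hand the rest is automatic, because concavity of $W$ in $\sigma$ is forced by the \emph{linear} dependence of $g(x,\sigma)=\phi(x)\sigma$ on $\sigma$; this is the structural reason \eqref{sigma} can hold while uniqueness fails, the failure being due to non-injectivity of $\Sigma_T$ on the equilibrium set (indeed $\sigma_T(\delta_{x_0+y^\ast})=\phi(x_0+y^\ast)=\phi(x_0-y^\ast)=\sigma_T(\delta_{x_0-y^\ast})$).
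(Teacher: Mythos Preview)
Your proof is correct, and for part (c) you take a genuinely different route from the paper. The paper restricts to $m_0=\delta_{x_0}$, uses the even symmetry of $\phi(x_0+\cdot)$ to reduce to $y\ge 0$, and then exploits the additional monotonicity hypotheses on $\phi$: via an elementary lemma (if $G$ is strictly increasing then minimizers of $F+G$ lie to the left of those of $F$), it shows that the minimizer set $\Omega(\sigma)$ is increasing in $\sigma$ and contained in $[0,z]$, and since $\phi(x_0+\cdot)$ is decreasing on $[0,z]$, concludes that $\sigma\mapsto\mathcal{E}_T(\sigma)$ is decreasing, hence $\sigma\mapsto\sigma-\mathcal{E}_T(\sigma)$ is strictly increasing.

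Your argument instead notes that $\sigma\mapsto V_\sigma(x)$ is concave (as an infimum of affine functions in $\sigma$), identifies every element of $\mathcal{E}_T(\sigma)$ as a supergradient of the concave function $W(\sigma)=\int_\R V_\sigma\,dm_0$, and then invokes the monotonicity of the superdifferential. This is more conceptual and strictly more general: it works for arbitrary $m_0$ and $T$, uses nothing about $\phi$ beyond positivity and boundedness (to guarantee attainment and integrability), and in particular does not need the extra shape hypotheses on $\phi$ that the proposition assumes. You in fact obtain the stronger inequality $(\tau_1-\tau_2)(\sigma_1-\sigma_2)\le 0$, i.e.\ $\mathcal{E}_T$ itself is antitone, not merely $I-\mathcal{E}_T$ monotone. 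The paper's approach, by contrast, yields concrete structural information about the location of minimizers relative to $z$ that your envelope argument does not see. The measurability concern you flag is mild---the disintegration already gives a Borel family $x\mapsto\hat\eta_x$, so $x\mapsto\bar\phi_\sigma(x)$ is Borel and bounded---and for the Dirac initial datum actually at stake in the proposition it is vacuous.
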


We emphasize that, in the statement of Proposition \ref{pr:sigma not unique}, the condition \eqref{sigma} holds only under the restriction that the initial condition must satisfy $m_0 = \delta_{x_0}$.
In the proof, we will use the following elementary lemma:
\begin{lemma} \label{lem:F+G}
	Let $F,G:I \to \bb{R}$ be continuous functions on an interval $I \subset \bb{R}$ such that $G$ is strictly increasing and let $\beta \in \argmin F\neq\emptyset$. 
	Then $\argmin (F+G) \subset I \cap \intoc{-\infty,\beta}$.
	Hence all the minimizers of $F + G$ are less than or equal to the minimizers of $F$.
\end{lemma}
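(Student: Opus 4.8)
The plan is to argue by contradiction, using only the two defining minimality inequalities together with the strict monotonicity of $G$; continuity of $F,G$ plays no role in this implication (it is presumably assumed only to guarantee that $\argmin F$ is nonempty in the intended application). Fix $\beta \in \argmin F$ and suppose, for contradiction, that there exists $x^* \in \argmin (F+G)$ with $x^* > \beta$. Both $\beta$ and $x^*$ belong to $I$, so all the values appearing below are well defined.

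The key step is to combine the two inequalities. Since $x^*$ minimizes $F+G$ on $I$ and $\beta \in I$, we have $F(x^*) + G(x^*) \le F(\beta) + G(\beta)$. Since $\beta$ minimizes $F$ on $I$ and $x^* \in I$, we have $F(\beta) \le F(x^*)$. Rearranging the first inequality and then using the second gives $G(x^*) \le G(\beta) + \bigl(F(\beta) - F(x^*)\bigr) \le G(\beta)$. On the other hand, $x^* > \beta$ together with the strict monotonicity of $G$ forces $G(x^*) > G(\beta)$, a contradiction.

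Hence no minimizer of $F+G$ can exceed $\beta$, i.e.\ $\argmin (F+G) \subset I \cap \intoc{-\infty,\beta}$. Since $\beta$ was an arbitrary element of $\argmin F$, this shows at the same time that every minimizer of $F+G$ is less than or equal to every minimizer of $F$, which is the final assertion. The only point requiring a little care is that $\argmin F$ and $\argmin (F+G)$ need not be singletons, so the argument must be run for an arbitrary pair $(\beta, x^*)$ as above; beyond that there is no real obstacle, the statement being genuinely elementary.
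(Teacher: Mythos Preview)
Your proof is correct and follows essentially the same approach as the paper's: both combine $F(\beta)\le F(x)$ with $G(\beta)<G(x)$ for $x>\beta$ to rule out minimizers of $F+G$ to the right of $\beta$. The only cosmetic difference is that the paper argues directly (showing $F(x)+G(x)>F(\beta)+G(\beta)$ for every $x>\beta$), whereas you phrase it as a contradiction specialized to a purported minimizer $x^*$.
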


\begin{proof}
	If $x > \beta$, then $F(x) \geq F(\beta)$ because $\beta$ minimizes $F$, and $G(x) > G(\beta)$ because $G$ is strictly increasing, so $F(x) + G(x) > F(\beta) + G(\beta)$.
	Thus no minimizer of $F + G$ can lie in $\intoo{\beta,\infty}$.
\end{proof}

\begin{proof}[Proof of Proposition \ref{pr:sigma not unique}]
	By Proposition \ref{pr:LL not unique}, we have only to show that \eqref{sigma} holds (here we have that $\mathcal{X}=\R$ and $\Sigma_T(m)=\sigma_T(m)$).
	For this, observe that with initial measure $m_0 = \delta_{x_0}$, we have
	\begin{equation*}
		\mathcal{E}_{T}(\sigma) = \cbr{\phi(x_0 + y) : y \in \Omega(\sigma)},
	\end{equation*}
	where
	\begin{equation}\label{def:Omega}
		\Omega(\sigma) := \argmin \cbr{\frac{y^2}{2T} + \phi(x_0 + y)\sigma : y \geq 0}.
	\end{equation}
	Note that we only need to consider $y \geq 0$ because $\phi(x_0 + y)$ is even.
	Since $\phi > 0$, we can take only $\sigma > 0$ and thus
	\begin{equation*}
		\Omega(\sigma) = \argmin \cbr{\frac{y^2}{2T\sigma} + \phi(x_0 + y) : y \geq 0}.
	\end{equation*}

	By the assumptions on $\phi$, it is obvious that $\Omega(\sigma)\neq\emptyset$. We claim that $\Omega(\sigma)$ is increasing in $\sigma$, i.e.~if $\sigma_2 > \sigma_1$ and $y_i \in \Omega(\sigma_i)$ for $i = 1,2$, then $y_2 \geq y_1$.
	Indeed, since
	\begin{equation}
		\frac{y^2}{2T\sigma_1} + \phi(x_0 + y) = \frac{y^2}{2T\sigma_2} + \phi(x_0 + y) + \del{\frac{1}{\sigma_1} - \frac{1}{\sigma_2}}\frac{y^2}{2T}
	\end{equation}
	and $y \mapsto \del{\frac{1}{\sigma_1} - \frac{1}{\sigma_2}}\frac{y^2}{2T}$ is a strictly increasing function on $\intco{0,\infty}$, the claim follows from Lemma \ref{lem:F+G}.
	
	We also claim that if $\Omega(\sigma_1) \ni z$ for some $\sigma_1 > 0$, then $\Omega(\sigma) = \{z\}$ for all $\sigma > \sigma_1$.
	Indeed, if $y \in \Omega(\sigma)$ for $\sigma \geq \sigma_1$, then since $\Omega(\sigma)$ is increasing we must have $y \geq z$.
	On the other hand, since $y \mapsto \frac{y^2}{2T\sigma} + \phi(x_0 + y)$ is strictly increasing on the interval $y \geq z$, it follows that $y \notin \Omega(\sigma)$ if $y > z$.
	The claim follows.
	
	Finally, we deduce that $\mathcal{E}_{T}(\sigma)$ is decreasing in $\sigma$.
	Indeed, let $0 < \sigma_1 < \sigma_2$ and let $y_i \in \Omega(\sigma_i)$ for $i = 1,2$.
	It follows that $y_1 \leq y_2 \leq z$.
	Since $\phi(x_0 + y)$ is decreasing on $[0,z]$, it follows that $\phi(x_0 + y_2) \leq \phi(x_0 + y_1)$, as desired.
	
	Now since $(0,+\infty)\ni\sigma\mapsto \mathcal{E}_{T}(\sigma)$ is decreasing, it follows that $(0,+\infty)\ni\sigma\mapsto  \sigma - \mathcal{E}_{T}(\sigma)$ is strictly increasing, i.e.~\eqref{sigma} holds strictly. The remaining conclusions follow directly from Proposition \ref{pr:LL not unique}.
\end{proof}

Propositions \ref{pr:LL not unique}, \ref{pr:LL really not unique}, and \ref{pr:sigma not unique} show that Lasry--Lions and even \eqref{sigma} type monotonicity conditions do not necessarily imply uniqueness of the \emph{measure}, even if the \emph{cost} is unique.
In terms of game theory, this implies that the payoffs may be uniquely determined, but the actions of the crowd are not.
One may ask whether this phenomenon is generic.
That is, when the optimal control problem does not always have a unique solution, must there always be more than one equilibrium?
We conclude this section by showing an example where this is not the case: although the optimal control problem for individuals may sometimes have more than one solution, nevertheless there is only one possible equilibrium in the game.

\subsection{Non-uniqueness of feedback strategies in general does not imply non-uniqueness of Nash equilibria}

We again work in dimension $d = 1$.
For the data, let us take
\begin{equation} \label{eq:special case1}
	G(x,m) = \phi(x)\sigma_T(m), \quad \sigma_T(m) = \int_{\R} \psi \dif m, \quad L(x,v) = \frac{1}{2}v^2,
\end{equation}
where $\psi$ is any function that is strictly increasing and continuous, and where $\phi$ is given by
\begin{equation} \label{eq:special case2}
	\phi(x) = \begin{cases}
		x, &\text{if}~x \leq 0,\\
		2x, &\text{if}~0 < x < 1,\\
		x+1, &\text{if}~x \geq 1.
	\end{cases}
\end{equation}
\begin{remark}
		Although $\phi$ and $\psi$ are both increasing functions, nevertheless it does not generally follow that $G$ is LL monotone.
Indeed, since $\phi(x) = x$ for $x \leq 0$, one may take any increasing function $\psi$ that is nonlinear on $\intoc{-\infty,0}$, then apply the argument found in the proof of Proposition \ref{pr:sigma not LLDX} below to conclude that $G$ is not LL monotone.
\end{remark}

\begin{theorem} \label{thm:special case}
	Let $m_0 \in \sr{P}_2(\bb{R})$ be any initial measure, and let the data be given by \eqref{eq:special case1} and \eqref{eq:special case2}.
	Then there exists a unique equilibrium measure $m$, i.e.~there is a unique fixed point $m \in \s{E}_T(m)$.
\end{theorem}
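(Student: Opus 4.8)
The plan is to reduce the problem to a scalar fixed-point equation, exploit monotonicity in the parameter $\sigma\in\mathcal{X}=\R$, and then argue that the unique equilibrium value of $\sigma$ pins down the whole equilibrium measure. Since $L(x,v)=\tfrac12 v^2$ does not depend on the measure, \eqref{L:reduced} holds, so an equilibrium corresponds to a fixed point $\sigma\in\mathcal{E}_T(\sigma)$ with $\Sigma_T(m)=\int_\R\psi\,\dif m$. For a given value $\sigma$, a curve starting at $x$ is optimal for $J^\sigma$ if and only if it is the affine curve joining $x$ to some point of $Y^*_\sigma(x):=\argmin_{y\in\R}f_{\sigma,x}(y)$, where $f_{\sigma,x}(y):=\tfrac{(x-y)^2}{2T}+\sigma\phi(y)$; this argmin is non-empty because $f_{\sigma,x}$ is coercive for every $\sigma$ (the quadratic dominates the linear growth of $\phi$). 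Consequently $\mathcal{E}_T(\sigma)$ is exactly the set of numbers $\int_\R\int_\R\psi(y)\,\dif\rho_x(y)\,\dif m_0(x)$, where $\{\rho_x\}$ ranges over Borel families of probabilities with $\spt\rho_x\subseteq Y^*_\sigma(x)$ for $m_0$-a.e.~$x$.

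First I would show that $\sigma\mapsto\mathcal{E}_T(\sigma)$ is decreasing, in the strong sense that $\tau_2\le\tau_1$ whenever $\sigma_1<\sigma_2$ and $\tau_i\in\mathcal{E}_T(\sigma_i)$. Writing $f_{\sigma_2,x}=f_{\sigma_1,x}+(\sigma_2-\sigma_1)\phi$ and noting that $(\sigma_2-\sigma_1)\phi$ is strictly increasing (the slopes of $\phi$ are $1,2,1$, all positive), Lemma~\ref{lem:F+G} gives $\sup Y^*_{\sigma_2}(x)\le\inf Y^*_{\sigma_1}(x)$ for every $x$; since $\psi$ is increasing, $\int\psi\,\dif\rho^{(2)}_x\le\int\psi\,\dif\rho^{(1)}_x$ for all admissible $\rho^{(i)}_x$, and integrating against $m_0$ gives $\tau_2\le\tau_1$. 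Because $\sigma_2>\sigma_1$, this makes $\sigma\mapsto\sigma-\mathcal{E}_T(\sigma)$ strictly increasing as a set-valued map, so it has at most one zero; together with the existence of an equilibrium (which holds by standard arguments, and which is also reflected in the fact that $\sigma-\mathcal{E}_T(\sigma)$ changes sign, since $\mathcal{E}_T(\sigma)\to\inf\psi$ as $\sigma\to+\infty$ and $\mathcal{E}_T(\sigma)\to\sup\psi$ as $\sigma\to-\infty$) this yields exactly one equilibrium value $\sigma^*$, so all equilibria have the same $\Sigma_T$.

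The heart of the argument is to show that $\sigma^*$ determines the equilibrium measure despite the possible non-uniqueness of optimal endpoints. The key structural fact is: for every $\sigma$, $f_{\sigma,x}$ has at most two global minimizers, and the set $S_\sigma:=\{x:f_{\sigma,x}\ \text{has two distinct global minimizers}\}$ contains at most one point. Indeed $\sigma\phi$ has at most one concave corner (at $y=1$ if $\sigma>0$, at $y=0$ if $\sigma<0$, and none if $\sigma=0$), so $f_{\sigma,x}$ is strictly convex on each of the two closed half-lines $(-\infty,c]$ and $[c,\infty)$ separated by that corner $c$; let $y_1(x)\le c\le y_2(x)$ be their unique minimizers. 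By the envelope theorem the difference $D(x):=f_{\sigma,x}(y_1(x))-f_{\sigma,x}(y_2(x))$ is continuously differentiable with $D'(x)=\tfrac1T(y_2(x)-y_1(x))\ge0$, and $D'(x)=0$ would force $y_1(x)=y_2(x)=c$, hence $\sigma(\phi'(c^-)-\phi'(c^+))\le0$; this is impossible because $\phi'(1^-)-\phi'(1^+)=1>0$ while $\sigma>0$, and $\phi'(0^-)-\phi'(0^+)=-1<0$ while $\sigma<0$. So $D$ is strictly increasing, hence vanishes at most once, and for every $x\notin S_\sigma$ the minimizer of $f_{\sigma,x}$ is unique.

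To conclude, let $m$ be any equilibrium, with associated family $\{\rho_x\}$ and value $\sigma^*$. For every $x$ outside the at most one point $x^*\in S_{\sigma^*}$, $Y^*_{\sigma^*}(x)=\{y^*(x)\}$ is a singleton, so $\rho_x=\delta_{y^*(x)}$ is forced, and the restriction of $m$ to $\R\setminus\{x^*\}$ equals the push-forward of the restriction of $m_0$ to $\R\setminus\{x^*\}$ by the measurable map $y^*$; this part is independent of the equilibrium. If $m_0(\{x^*\})=0$ (in particular if $S_{\sigma^*}=\emptyset$), we are done. Otherwise $Y^*_{\sigma^*}(x^*)=\{y_1,y_2\}$ with $y_1\ne y_2$, $\rho_{x^*}=\lambda\delta_{y_1}+(1-\lambda)\delta_{y_2}$ for some $\lambda\in[0,1]$, and $m=\nu+m_0(\{x^*\})(\lambda\delta_{y_1}+(1-\lambda)\delta_{y_2})$ with $\nu$ already determined. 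The equilibrium constraint $\int_\R\psi\,\dif m=\sigma^*$ then reads $m_0(\{x^*\})(\lambda\psi(y_1)+(1-\lambda)\psi(y_2))=\sigma^*-\int\psi\,\dif\nu$; since $m_0(\{x^*\})>0$ and $\psi(y_1)\ne\psi(y_2)$ (as $\psi$ is strictly increasing), this determines $\lambda$ uniquely, so $m$ is unique. I expect the main obstacle to be this last part: establishing that the non-uniqueness of the agents' optimal response is confined to at most a single initial point $x^*$, so that the single scalar equilibrium equation removes all the remaining freedom.
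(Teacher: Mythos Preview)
Your proof is correct and follows the same overall scheme as the paper: reduce to a scalar fixed point $\sigma\in\mathcal{E}_T(\sigma)$, use Lemma~\ref{lem:F+G} together with the strict increase of $\phi$ to show $\mathcal{E}_T$ is decreasing, establish that for each $\sigma$ at most one initial point admits two distinct optimal endpoints, and finally use the single scalar constraint $\int\psi\,\dif m=\sigma^*$ (with $\psi$ strictly increasing) to pin down the mass split at that exceptional point. The one substantive difference is in the third step: the paper computes the minimizers of $f_{\sigma,x}$ explicitly by case analysis on the sign of $\partial_y f_{\sigma,x}$ and exhibits the exceptional point $x^*=1+\tfrac{3\sigma T}{2}$ and the two minimizers $1\pm\tfrac{\sigma T}{2}$ directly, whereas you argue via the envelope theorem that $D(x)=f_{\sigma,x}(y_1(x))-f_{\sigma,x}(y_2(x))$ has derivative $(y_2(x)-y_1(x))/T>0$ and hence vanishes at most once. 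Your route is cleaner and, as the paper itself remarks after the theorem, points toward the extension to more general increasing $\phi$ with a single concave corner. One place where the paper is more careful is existence of the scalar fixed point: since $\mathcal{E}_T$ is genuinely set-valued, a sign change of $\sigma\mapsto\sigma-\mathcal{E}_T(\sigma)$ does not automatically yield a zero; the paper proves that $\mathcal{E}_T$ is \emph{maximal} decreasing (via a semicontinuity argument for $y_*$ and $y^*$) before concluding. Your appeal to standard MFG existence results covers this, but it is worth being aware that the paper handles the fixed-point existence directly.
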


\begin{remark}
	It seems to us that the conclusion of Theorem \ref{thm:special case} holds for a much more general class of increasing functions $\phi$.
	We have chosen to keep the structure simple so as not to obscure the main idea of the proof.
\end{remark}

\begin{proof}[Proof of Theorem \ref{thm:special case}]
	Define
\begin{equation}
	\begin{split}
		\Phi(x,y,\sigma) &= \frac{(x-y)^2}{2T} + \phi(y)\sigma, \quad Y(x,\sigma) = \argmin \Phi(x,\cdot,\sigma),\\
		y_*(x,\sigma) &= \min Y(x,\sigma), \quad y^*(x,\sigma) = \max Y(x,\sigma).
	\end{split}
\end{equation}
Using Lemma \ref{lem:F+G}, the strict convexity of the square, and the fact that $\phi$ is strictly increasing,
we deduce that
\begin{equation}\label{y_inc_sigma}
	y^*(x,\sigma_2) \leq y_*(x,\sigma_1) \quad \forall \sigma_2 \geq \sigma_1, \quad y^*(x_2,\sigma) \geq y_*(x_1,\sigma) \quad \forall x_2 \geq x_1.
\end{equation}

We claim that there is at most one $x$ such that $\Phi(x,\cdot,\sigma)$ has two minimizers.
To see this, we first compute
\begin{equation}
	\partial_y \Phi(x,y,\sigma) = \frac{1}{T}\begin{cases}
		y - (x-2\sigma T) & \text{if}~y \in (0,1),\\
		y - (x-\sigma T) & \text{if}~y \notin [0,1].
	\end{cases}
\end{equation}
We divide into two cases.
By examining the sign of the derivative, we can deduce intervals of increase and decrease to identify candidates for minimizers.
\begin{enumerate}
	\item First assume $\sigma T < 1$.
	If $x \leq 2\sigma T$, then $\Phi(x,\cdot,\sigma)$ has a unique minimizer, $\min\{x-\sigma T,0\}$, while if $2\sigma T < x \leq 1 + \sigma T$, then the unique minimizer is $x-2\sigma T$.
	If $x \geq 1 + 2\sigma T$, then the unique minimizer is $x - \sigma T$.
	
	In the remaining case where $1 + \sigma T < x < 1 + 2\sigma T$, both $x - 2\sigma T$ and $x - \sigma T$ are candidates for minimizer.
	We compute
	\begin{equation} \label{eq:min Phi}
		\begin{split}
			\Phi(x,x-2\sigma T,\sigma) &= 2\sigma x - 2T\sigma^2,\\
			\Phi(x,x-\sigma T,\sigma) &= \sigma (x+1) - \frac{T\sigma^2}{2},\\
		\end{split}
	\end{equation}
	and these are equal if and only if $x = 1 + \frac{3\sigma T}{2}$, in which case it follows that $1 \pm \frac{\sigma T}{2}$ are both minimizers.
	\item Now assume, to the contrary, that $\sigma T \geq 1$.
	If $x \leq 2\sigma T$, then the unique minimizer is $\min\{x-\sigma T,0\}$.
	If $x \geq 1 + 2\sigma T$, then the unique minimizer is $x-\sigma T$.
	
	In the remaining case where $2\sigma T \leq x < 1 + 2\sigma T$, both $x - 2\sigma T$ and $x - \sigma T$ are candidates for minimizer.
	We get the same values as in \eqref{eq:min Phi}.
\end{enumerate}
We conclude that there are exactly two distinct minimizers, $1 \pm \frac{\sigma T}{2}$, if and only if $x = 1 + \frac{3\sigma T}{2}$; otherwise, there is only one minimizer.
In other words, for $x \neq 1 + \frac{3\sigma T}{2}$, $y_*(x,\sigma) = y^*(x,\sigma)$, but on the other hand
\begin{equation*}
	Y\del{1 + \frac{3\sigma T}{2},\sigma} = \cbr{1 + \frac{\sigma T}{2},1 - \frac{\sigma T}{2}}.
\end{equation*}

In what follows it will be useful to note that in all cases,
\begin{equation} \label{eq:bounds on y*}
	\abs{y_*(x,\sigma)}, \abs{y^*(x,\sigma)} \leq \abs{x} + 2T\abs{\sigma}.
\end{equation}
	
Let $x_\sigma:=1 + \frac{3\sigma T}{2}.$ If $m_0(\{x_\sigma\})=0$ or $\sigma=0$ (in which case one has that $y_*(x_0,0)=y^*(x_0,0)$) all points $x$ from the support of $m_0$ have a unique destination $y_*(x,\sigma)=y^*(x,\sigma)$. In such case the target measure has the form $y_*(\cdot,\sigma)_\sharp m_0$ and so
$$
\mathcal{E}_{T}(\sigma) = \int_{\R} \psi\del{y_*(x,\sigma)}\dif m_0(x).
$$
As $\psi$ is strictly increasing and $\sigma\mapsto y_*(x,\sigma)$ is decreasing for any $x$ (cf. \eqref{y_inc_sigma}), we have that $\mathcal{E}_{T}$ is decreasing.

If $m_0(\{x_\sigma\}) >0$ and $\sigma\neq 0$, then from the point $x_\sigma$, $cm_0(\{x_\sigma\})$ amount of mass can travel to $1 + \frac{\sigma T}{2}$ while all the remaining $(1-c)m_0(\{x_\sigma\})$ amount of mass travels to $1 - \frac{\sigma T}{2}$, for an arbitrary $c\in[0,1].$ All the remaining points $x$ from the support of $m_0$ have a unique destination $y_*(x,\sigma)$. Therefore the target measure has the form 
\begin{equation} \label{eq:equil cond special case}
	m = y_*(\cdot,\sigma)_\sharp \left(m_0\mres(\R\setminus\{x_\sigma\})\right)+m_0(\{x_\sigma\})\del{c\delta_{1 + \frac{\sigma T}{2}}+(1-c)\delta_{1 - \frac{\sigma T}{2}}},
\end{equation}
where $m_0\mres{E}$ denotes the measure $m_0$ restricted to a set $E$ and $\delta_z$ is a Dirac mass concentrated at $z$.
Therefore, as $\psi$ is strictly increasing, $\mathcal{E}_{T}(\sigma)$ becomes set valued, i.e.
\begin{align*}
\mathcal{E}_{T}(\sigma) &=  \cbr{\int_{\R\setminus\{x_\sigma\}} \psi\del{y_*(x,\sigma)}\dif m_0(x) + m_0(\{x_\sigma\})\left[c\psi\left(1 + \frac{\sigma T}{2}\right)+(1-c)\psi\left(1 - \frac{\sigma T}{2}\right)\right] : c \in [0,1]}\\
& = \cbr{\int_{\R} \del{c\psi\del{y_*(x,\sigma)} + (1-c)\psi\del{y^*(x,\sigma)}}\dif m_0(x) : c \in [0,1]},
\end{align*}
which can be written more simply as the interval
\begin{equation*}
	\mathcal{E}_{T}(\sigma) = \intcc{\int_{\R} \psi\del{y_*(x,\sigma)}\dif m_0(x), \int_{\R} \psi\del{y^*(x,\sigma)}\dif m_0(x)},
\end{equation*}
where we have used the fact that $\psi\del{y_*(x,\sigma)} = \psi\del{y^*(x,\sigma)}$ for $x \neq x_\sigma$.
Again, by \eqref{y_inc_sigma}, we see that $\s{E}_T(\sigma)$ is decreasing.
We claim also that it is maximal, i.e.~if $(\sigma,\tau)$ is any pair satisfying
\begin{equation} \label{eq:monotonicity}
	(\tilde\tau -  \tau)(\tilde\sigma -  \sigma) \leq 0 \quad \forall \tilde \sigma \in \bb{R}, \tilde \tau \in \s{E}_T(\tilde \sigma),
\end{equation}
then it follows that $\tau \in \s{E}_T(\sigma)$.
Thus suppose \eqref{eq:monotonicity} holds.
Let $\sigma_n$ be a sequence that increases to $\sigma$ as $n \to \infty$.
Since $\tau_n = \int_{\R} \psi\del{y_*(x,\sigma_n)}\dif m_0(x) \in \s{E}_T(\sigma_n)$, \eqref{eq:monotonicity} implies
\begin{equation*}
	(\tau_n - \tau)(\sigma_n - \sigma) \leq 0 \ \forall n \quad \Rightarrow \tau \leq \tau_n \ \forall n.
\end{equation*}
Let us show that $\ds\liminf_{n\to+\infty} \tau_n \leq \int_{\R} \psi\del{y^*(x,\sigma)}\dif m_0(x)$.
By \eqref{eq:bounds on y*} and the dominated convergence theorem, it is enough to show that
\begin{equation}
	\liminf_{n\to+\infty} \psi\del{y_*(x,\sigma_n)} \leq \psi\del{y^*(x,\sigma)}
\end{equation}
for all $x$, and since $\psi$ is strictly increasing this is equivalent to
\begin{equation} \label{eq:liminf}
	\liminf_{n\to+\infty} y_*(x,\sigma_n) \leq y^*(x,\sigma).
\end{equation}
(In fact, we get equality, the opposite inequality being trivial.)
Let $y_n = y_*(x,\sigma_n)$ for some fixed $x$; since $\sigma_n \to \sigma$, $y_n$ is bounded by \eqref{eq:bounds on y*}.
Without relabeling, we pass to a subsequence $y_n$ that converges to some $y_\infty$.
Observe that, since $y_n \in Y(x,\sigma_n)$ (i.e.~$y_n$ is a minimizer of $\Phi(x,\cdot,\sigma_n)$),
\begin{equation}
	\Phi(x,y_n,\sigma) = \Phi(x,y_n,\sigma_n) - \Phi(x,y_n,\sigma_n) + \Phi(x,y_n,\sigma)
	\leq \Phi(x,y,\sigma_n) - \Phi(x,y_n,\sigma_n) + \Phi(x,y_n,\sigma) \quad \forall y,
\end{equation}

hence, letting $n \to \infty$ and using the continuity of $\Phi$, we get
\begin{equation}
	\Phi(x,y_\infty,\sigma) \leq \Phi(x,y,\sigma) \quad \forall y.
\end{equation}
It follows that $y_\infty \in Y(x,\sigma)$, so $y_\infty \leq y^*(x,\sigma)$.
The claim \eqref{eq:liminf} follows, from which we deduce, in turn, that 
\begin{equation}
	\tau \leq \liminf \tau_n \leq \int_{\R} \psi\del{y^*(x,\sigma)}\dif m_0(x).
\end{equation}
We next need to prove $\tau \geq \int_{\R} \psi\del{y_*(x,\sigma)}\dif m_0(x)$.
This is entirely analogous.
We take $\sigma_n$ decreasing to $\sigma$, then let $\tau_n = \int_{\R} \psi\del{y^*(x,\sigma_n)}\dif m_0(x)$ and are able to prove, by the mirror image of the same arguments, that
\begin{equation}
	\tau \geq \limsup \tau_n \geq \int_{\R} \psi\del{y_*(x,\sigma)}\dif m_0(x),
\end{equation}
as desired.
It follows that $\tau \in \s{E}_T(\sigma)$, which is what we wanted to show.
Since $\s{E}_T$ is a maximal decreasing set-valued function, it has a unique fixed point $\sigma \in \s{E}_T(\sigma)$.
From now on, we take $\sigma$ to be this fixed point.

If $m$ is any equilibrium measure,  i.e.~if $m \in E_T(m)$, then it must have the form \eqref{eq:equil cond special case} for some $c \in [0,1]$.
We now show that the equilibrium measure is unique.
In particular  the constant $c \in [0,1]$ in \eqref{eq:equil cond special case} is uniquely determined by the equilibrium condition.
\begin{enumerate}
	\item First consider the case $\sigma = 0$.
	In this case, there is in fact no mass splitting, and the formula \eqref{eq:equil cond special case} is independent of $c$ and completely determines $m$.
	\item We now suppose $\sigma \neq 0$, and we show that the constant $c$ is uniquely determined by the equilibrium condition.
	We must have
	\begin{equation*}
		\sigma = \sigma_T(m) = \int_{\R} \psi(x)\dif m(x),
	\end{equation*}
	which by \eqref{eq:equil cond special case} implies
	\begin{equation} \label{eq:c determined}
		\sigma = \int_{\{x_\sigma\}^c} \psi(y_*(x,\sigma))\dif m_0(x) + m_0\del{\cbr{x_\sigma}}\del{c\psi\del{1 + \frac{\sigma T}{2}} + (1-c)\psi\del{1 - \frac{\sigma T}{2}}}.
	\end{equation}
	Since $\psi$ is strictly increasing and $\sigma \neq 0$, it follows that $\psi\del{1 + \frac{\sigma T}{2}}$ and $\psi\del{1 - \frac{\sigma T}{2}}$ are distinct.
	Therefore \eqref{eq:c determined} uniquely defines $c$, which in turn uniquely defines the equilibrium measure $m$.
\end{enumerate}
\end{proof}

\begin{remark}
The proof of Theorem \ref{thm:special case} relies on a special construction of the data to make minimizers of the optimal control problem explicit.
More abstractly, we have used the following scheme:
\begin{enumerate}
	\item prove that there is a unique equilibrium point $\sigma \in \mathcal{E}_{T}(\sigma)$;
	\item prove that, for this $\sigma$, the equation $\sigma_T(m) = \sigma$ has only one solution.
\end{enumerate}
To prove the second point, it was useful to assume a sort of monotonicity property; in this case, $\sigma_T(m) = \int_{\R} \psi \dif m$ with $\psi$ monotone.
Note, however, that this is generally insufficient; we also needed the fact that for every $x$ but one, there was a unique optimal trajectory starting from $x$.
Thus one can see that in general, the issue of uniqueness can be rather complex, but not insurmountable.
\end{remark}

\section{No implications between conditions} \label{sec:no implications}

The purpose of this section is to highlight simple examples in which each of the four conditions given in Section \ref{sec:4 conditions} might hold, and to show that none of them necessarily implies any of the others.
For simplicity, let us assume $L(x,v) = \frac{1}{2}\abs{v}^2$, so that optimal trajectories starting from $x$ are straight lines ending up at a point $y$ satisfying
\begin{equation} \label{eq:1st order cond}
	y + T D_x G(y,m) = x,
\end{equation}
where $m$ is the final measure.
If we consider $(I + T D_x G(\cdot,m))^{-1}$ as a multi-valued function, where $I$ is the identity map, then \eqref{eq:E1hat} and \eqref{eq:E2} can be rewritten now as follows.
For a given initial random variable $X_0$ whose law is $m_0$, we have
\begin{equation}
	\tilde E_T(X) = \cbr{Y \in \bb{H} : Y \in \del{I + T D_x G(\cdot,\s{L}_X)}^{-1}(X_0) \text{ a.s.}}
\end{equation}
Similarly, if $G(x,m) = g(x,\sigma_T(m))$, recalling the definition \eqref{def:Omega}, we can write
\begin{equation}
	\Omega(\sigma) = \cbr{\sigma_T(\s{L}_Y) : Y \in \bb{H}, Y \in \del{I + T D_x g(\cdot,\sigma)}^{-1}(X_0) \text{ a.s.}}.
\end{equation}
In each of the examples below, we construct a function $G(x,m)$ such that at least one of the conditions given in Section \ref{sec:4 conditions} holds, but at least one another does not.
Through these formulas we can understand \eqref{X} and \eqref{sigma} as truly being properties of $G$ itself, as is evident in the case of conditions \eqref{LL} and \eqref{D}.
Our task now is to show that these properties have no necessary implications between them. First of all, we can formulate the following well-known theorem.

\begin{theorem}
The condition \eqref{LL} in general does not imply \eqref{D}, nor does \eqref{D} in general imply \eqref{LL}.
\end{theorem}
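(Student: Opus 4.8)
The plan is to prove the two non-implications by exhibiting explicit counterexamples, one in each direction. The guiding observation is that a coupling which is \emph{affine} in the state variable cannot separate the two conditions: for $G(x,m)=h(x)+x\cdot K(m)$ one checks that, up to a harmless convex term $h$, both \eqref{LL} and \eqref{D} reduce to the same scalar monotonicity of $m\mapsto K(m)$ tested against the barycenter map $m\mapsto\int_{\R^d} y\,\dif m(y)$. Hence I would build couplings that are genuinely nonlinear in $x$ and exploit the fact that \eqref{D} only ``sees'' the gradient $D_xG$, whereas \eqref{LL} ``sees'' $G$ itself.

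For the direction that \eqref{LL} does not imply \eqref{D}, I would take $G(x,m)=\phi(x)\int_{\R^d}\phi\,\dif m$ for a bounded, non-affine, smooth $\phi:\R^d\to\R$ chosen so that $\tfrac12\phi^2$ is not convex (for instance $\phi(x)=\sin x_1$). The Lasry--Lions quantity then collapses to $\del{\int_{\R^d}\phi\,\dif(m_1-m_2)}^2\ge 0$, so \eqref{LL} holds (indeed strongly, since $\phi$ is non-constant). On the other hand $D_xG(x,m)=\nabla\phi(x)\int_{\R^d}\phi\,\dif m$, and testing \eqref{D} against two \emph{deterministic} variables $X_1\equiv b$, $X_2\equiv a$ produces $(b-a)\cdot(\nabla\Psi(b)-\nabla\Psi(a))$ with $\Psi=\tfrac12\phi^2$; non-convexity of $\Psi$ makes this negative for an appropriate pair $(a,b)$, so \eqref{D} fails.

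For the direction that \eqref{D} does not imply \eqref{LL}, I would take $G(x,m)=\tfrac12\abs{x}^2-x\cdot\int_{\R^d}y\,\dif m(y)$, so that $D_xG(x,m)=x-\bar m$ with $\bar m:=\int_{\R^d} y\,\dif m$. Plugging this into \eqref{D} yields, for every $X_1,X_2\in\bb{H}$,
\[
\bb{E}\sbr{\del{D_xG(X_1,\mathcal{L}_{X_1})-D_xG(X_2,\mathcal{L}_{X_2})}\cdot(X_1-X_2)}=\bb{E}\abs{X_1-X_2}^2-\abs{\bb{E}(X_1-X_2)}^2\ge 0
\]
by Jensen's inequality (it is precisely the variance of $X_1-X_2$); hence \eqref{D} holds. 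But testing \eqref{LL} with $m_1=\delta_a$, $m_2=\delta_0$ for any $a\ne 0$ gives $G(x,m_1)-G(x,m_2)=-x\cdot a$, and integrating $-x\cdot a$ against $\delta_a-\delta_0$ yields $-\abs{a}^2<0$, so \eqref{LL} fails.

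All the displayed computations are elementary; the only real content is the selection of the examples, so the ``main obstacle'' is conceptual rather than technical. One must recognise that affine-in-$x$ couplings see \eqref{LL} and \eqref{D} identically, and therefore load the measure dependence into a nonlinear state profile for the first direction and, dually, into a term that is invisible to $D_x$ yet detected by the bilinear Lasry--Lions pairing for the second. A minor point is to verify that the \eqref{D} identity in the quadratic example holds for \emph{all} pairs $X_1,X_2$ and not merely for Diracs; this follows at once from Jensen's inequality. These examples are in the spirit of those already appearing in \cite{gangbo2020global,gangbo2021mean}.
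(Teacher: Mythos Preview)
Your proposal is correct. Both counterexamples are valid: for the first direction the Lasry--Lions integral collapses to a square and is nonnegative, while restricting \eqref{D} to Dirac masses reduces it to monotonicity of $\nabla(\tfrac12\phi^2)$, which fails since $\tfrac12\sin^2 x_1$ is not convex; for the second direction the \eqref{D} quantity is exactly the variance of $X_1-X_2$, hence nonnegative, whereas the Lasry--Lions integral equals $-\abs{\bar m_1-\bar m_2}^2$, which is strictly negative on Diracs.

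As for comparison with the paper: the paper does not actually supply a proof of this theorem. It simply refers the reader to \cite{gangbo2020global,gangbo2021mean,meszaros2021mean}, and you yourself note that your examples are in the spirit of those references. So you have written out explicitly what the paper leaves implicit. Your second example (the variance computation) is essentially the one appearing in \cite{gangbo2021mean}; your first example is a mild variant of the ``potential'' type counterexamples in those references (where LL monotone couplings of the form $\phi(x)\!\int\phi\,\dif m$ are shown to violate displacement monotonicity by exhibiting non-convexity of $\tfrac12\phi^2$). There is nothing to correct.
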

\begin{proof}
The proof of this result is well-known now in the literature, we refer to \cite{gangbo2020global,gangbo2021mean,meszaros2021mean} for the details.
\end{proof}

\subsection{\eqref{LL} does not imply \eqref{X} or \eqref{sigma}}

Here we show that \eqref{LL} can hold even when neither \eqref{X} nor \eqref{sigma} holds.
Actually, we show that none of the conditions \eqref{X}, -\eqref{X}, \eqref{sigma}, and -\eqref{sigma} hold.
In other words, changing monotonicity to ``anti-monotonicity'' does not affect the overall result, which is that Lasry-Lions monotonicity does not imply either of these other kinds of monotonicity.
We will abide by this same pattern in the following subsection: to say that a condition does not imply \eqref{X} (resp.~\eqref{sigma}) is also to say that it does not imply -\eqref{X} (resp.~-\eqref{sigma}).

\begin{proposition}
Let $d = 1$, let $f:\R\to\R$ be given, with at most cubic growth at $\pm\infty$. Let $g:\R\times\R\to\R$ be given by $g(x,\sigma) = f(x)\sigma$, $\sigma:\sP_3(\R)\to\R$ is defined as $\sigma_T(m) = \int_{\R} f \dif m$, and let $G:\R\times\sP_3(\R)\to\R$ be given $G(x,m) = g(x,\sigma_T(m))$.
\begin{enumerate}
\item Then $G$ is always Lasry--Lions monotone, regardless of the choice of $f$.
\item Let $f(x) = \frac{1}{3}x^3$. Then $G$ satisfies neither $\pm$\eqref{X} nor $\pm$\eqref{sigma}.
\end{enumerate}
\end{proposition}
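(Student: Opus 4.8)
The two parts are handled separately. For part (1) I would just unfold the Lasry--Lions pairing: writing $A_i := \int_{\R} f\,\dif m_i$ (a finite real number since $f$ has at most cubic growth and $m_i \in \sP_3(\R)$), we have $G(x,m_i) = f(x)A_i$, hence
\begin{equation*}
	\int_{\R}\del{G(x,m_1) - G(x,m_2)}\dif (m_1 - m_2)(x) = (A_1 - A_2)\int_{\R} f(x)\,\dif (m_1 - m_2)(x) = (A_1 - A_2)^2 \ge 0,
\end{equation*}
which is \eqref{LL} (indeed strongly, exactly as in Proposition~\ref{pr:LL not unique}, since equality forces $A_1 = A_2$ and hence $G(\cdot,m_1) \equiv G(\cdot,m_2)$). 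No structure of $f$ beyond the growth bound is used.

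For part (2) I would specialize to $m_0 = \delta_0$, so every agent starts at the origin and $X_0 \equiv 0$. With $f(x) = \frac13 x^3$ we get $D_x G(x,m) = x^2\int_{\R} f\,\dif m$, so the first-order condition \eqref{eq:1st order cond} from the origin, for a candidate final parameter $\sigma = \int_{\R} f\,\dif m$, is $y(1 + T\sigma y) = 0$. Hence for $\sigma \ne 0$ the multivalued inverse has the two branches $(I + T D_x G(\cdot,m))^{-1}(0) = \cbr{0,\,-\tfrac1{T\sigma}}$: a ``trivial'' response $y = 0$ and a ``nontrivial'' one $y = -\tfrac1{T\sigma}$. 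Since $m_0$ is a Dirac mass, the associated output parameters are $\sigma_T(\delta_0) = f(0) = 0$ and $\sigma_T\del{\delta_{-1/(T\sigma)}} = f\del{-\tfrac1{T\sigma}} = -\tfrac1{3T^3\sigma^3}$, with mixtures filling the segment between them. Translated through the formulas recorded at the start of Section~\ref{sec:no implications}, this says $0 \in \tilde E_T(X)$ and $-\tfrac1{T\sigma_X} \in \tilde E_T(X)$ for every $X \in \bb{H}$ with $\sigma_X := \tfrac13\bb{E}[X^3] \ne 0$, and likewise $0, -\tfrac1{3T^3\sigma^3} \in \mathcal{E}_T(\sigma)$ for every $\sigma \ne 0$.

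Using these branches I would then violate each of the four inequalities by a one-parameter family. The reversed conditions $-\eqref{X}$ and $-\eqref{sigma}$ fail already on the trivial branch: with $Y_1 = Y_2 = 0$ (resp.\ $\tau_1 = \tau_2 = 0$) and constant $X_1 = 1$, $X_2 = -1$ (resp.\ $\sigma_1 = 1$, $\sigma_2 = -1$) we get $\bb{E}[(Y_1 - Y_2)(X_1 - X_2)] = 0 < 4 = \bb{E}\abs{X_1 - X_2}^2$ (resp.\ $(\tau_1 - \tau_2)(\sigma_1 - \sigma_2) = 0 < 4 = (\sigma_1 - \sigma_2)^2$), contrary to the reversed inequalities. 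For \eqref{sigma} I would use the nontrivial branch near $0$: fixing $T > 0$ and taking $0 < \sigma_1 < \sigma_2$ small with $\tau_i = -\tfrac1{3T^3\sigma_i^3} \in \mathcal{E}_T(\sigma_i)$, the map $\sigma_i \mapsto \sigma_i - \tau_i = \sigma_i + \tfrac1{3T^3\sigma_i^3}$ has derivative $1 - T^{-3}\sigma_i^{-4} < 0$ for $\sigma_i$ small, so $\sigma_1 - \tau_1 > \sigma_2 - \tau_2$ although $\sigma_1 < \sigma_2$; equivalently $(\tau_1 - \tau_2)(\sigma_1 - \sigma_2) > (\sigma_1 - \sigma_2)^2$, which contradicts \eqref{sigma}. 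For \eqref{X} I would take constants $X_i = a_i$ with $a_1, a_2 > 0$ small and nontrivial responses $Y_i = -\tfrac3{T a_i^3}$; using $a_1^3 - a_2^3 = (a_1 - a_2)(a_1^2 + a_1 a_2 + a_2^2)$ one finds
\begin{equation*}
	\bb{E}[(Y_1 - Y_2)(X_1 - X_2)] = \frac{3(a_1^2 + a_1 a_2 + a_2^2)}{T a_1^3 a_2^3}\,(a_1 - a_2)^2,
\end{equation*}
whose ratio to $\bb{E}\abs{X_1 - X_2}^2 = (a_1 - a_2)^2$ blows up as $a_1, a_2 \to 0^+$ (for $a_1 = 2\epsilon$, $a_2 = \epsilon$ it equals $\tfrac{21}{8T\epsilon^4}$), so \eqref{X} fails once $a_i$ are small enough. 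Thus none of $\pm\eqref{X}$, $\pm\eqref{sigma}$ holds.

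The main point to be careful about is interpretive, not computational: throughout part (2) one must use the convention — already built into the formulas for $\tilde E_T$ and $\Omega$ at the start of Section~\ref{sec:no implications} — that $(I + T D_x G(\cdot,m))^{-1}$ denotes the \emph{full} solution set of $y + T D_x G(y,m) = x$, not the set of global minimizers of the agent's cost (which here is empty, since $y \mapsto \frac{(x - y)^2}{2T} + \frac{\sigma}{3} y^3$ is unbounded below whenever $\sigma \ne 0$). It is exactly this that makes the nontrivial branch $y = -\tfrac1{T\sigma}$ available, and hence forces \eqref{X} and \eqref{sigma} to fail as well. The remaining verifications are routine: $\range \Sigma_T = \R$ since $m \mapsto \tfrac13\int_{\R} x^3\,\dif m$ is surjective, constant random variables lie in $\bb{H}$, and $\delta_0 \in \sP_3(\R)$.
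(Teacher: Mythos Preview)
Your proof is correct. Part (1) is identical to the paper's argument. For part (2), your approach differs from the paper's in the choice of initial measure: you take $m_0 = \delta_0$, which makes the two branches of the first-order condition explicitly $y = 0$ and $y = -1/(T\sigma)$, whereas the paper takes $m_0 = \delta_1$ and obtains the two quadratic-formula roots $Y_\sigma^\pm = \dfrac{-1 \pm \sqrt{1 + \frac{4}{3}T\sigma^3}}{\frac{2}{3}T\sigma^3}$ for $X = \sigma$ constant. The paper then runs a single limiting argument, observing that $\lim_{\tilde\sigma \to \sigma}(Y_{\tilde\sigma}^+ - Y_\sigma^-) > 0$ (and similarly for the $\sigma$-condition), so that mixing the $+$ and $-$ branches violates all four inequalities for $\tilde\sigma$ close to $\sigma$. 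You instead handle $\pm$\eqref{X} and $\pm$\eqref{sigma} separately: the trivial branch $Y \equiv 0$ dispatches the reversed inequalities, and the nontrivial branch at small parameter dispatches the forward ones. Your choice of $\delta_0$ buys cleaner algebra; the paper's device is more uniform in that one limit handles all four cases at once.

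Both proofs rest on exactly the interpretive point you flag at the end --- that $(I + T D_x G(\cdot,m))^{-1}$ denotes the full solution set of the first-order condition rather than the (here empty) set of global minimizers. The paper adopts this convention tacitly in the displayed formulas at the start of Section~\ref{sec:no implications} but does not otherwise remark on it; your explicit acknowledgment is a useful clarification.
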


\begin{proof}
(1) this is immediate, using the definition. Indeed,
\begin{equation*}
	\int_{\R} \del{G(x,m_1) - G(x,m_2)}\dif\, (m_1 - m_2)(x) = \del{\int_{\R} f \dif\, (m_1 - m_2)}^2 \geq 0.
\end{equation*}

(2) We note that the first order condition \eqref{eq:1st order cond} becomes

\begin{equation}
	y + T\sigma y^2 = x,
\end{equation}
whose solution set is
\begin{equation}
	y = \frac{-1 \pm \sqrt{1 + 4T\sigma x}}{2T\sigma}
\end{equation}
whenever $1 + 4T\sigma x \geq 0$.
Let us fix $m_0$ to be the Dirac mass $\delta_1$, and let $X_0 = 1$ a.s.
Then $\tilde E_T(X)$ consists of the set of all random variables $Y$ such that for a.s.~$\omega$, we have
\begin{equation}
	Y(\omega) \in \cbr{\frac{-1 + \sqrt{1 + 4T\bb{E}\sbr{\frac{1}{3}X^3}X_0(\omega)}}{2T\bb{E}\sbr{\frac{1}{3}X^3}}, \frac{-1 - \sqrt{1 + 4T\bb{E}\sbr{\frac{1}{3}X^3}X(\omega)}}{2T\bb{E}\sbr{\frac{1}{3}X^3}}}.
\end{equation}
Now for each $\sigma > 0$ define $X_\sigma$ to be the constant random variable $\sigma$, and set
\begin{equation}
	Y_\sigma^\pm = \frac{-1 \pm \sqrt{1 + \frac{4}{3}T{\sigma}^{3}}}{\frac{2}{3}T{\sigma}^3}.
\end{equation}
Note that $Y_\sigma^\pm \in \tilde E_T(X_\sigma)$.
Observe that
\begin{equation*}
	\lim_{\tilde{\sigma} \to \sigma} Y_{\tilde{\sigma}}^{+} - Y_{\sigma}^- = \frac{3\sqrt{1 + \frac{4}{3}T{\sigma}^{3}}}{T{\sigma}^3} > 0.
\end{equation*}
Therefore, for $\tilde{\sigma} > \sigma$ small enough, we have
\begin{equation*}
	\bb{E}\sbr{(Y_{\tilde{\sigma}}^{+} - Y_{\sigma}^-)(X_{\tilde{\sigma}} - X_\sigma)}
	= (Y_{\tilde{\sigma}} - Y_{\sigma}^-)(\tilde{\sigma} - \sigma)
	> (\tilde{\sigma} - \sigma)^2 = \bb{E}\abs{X_{\tilde{\sigma}} - X_\sigma}^2.
\end{equation*}
Therefore \eqref{X} does not hold.
Likewise -\eqref{X} does not hold, because for $\tilde\sigma>\sigma$ small enough
\begin{equation*}
	\bb{E}\sbr{(Y_{\tilde{\sigma}}^- - Y_{\sigma}^{+})(X_{\tilde{\sigma}} - X_\sigma)}
	= (Y_{\tilde{\sigma}}^- - Y_{\sigma}^{+})(\tilde{\sigma} - \sigma)
	< 0.
\end{equation*}

We argue in similar fashion to see that neither \eqref{sigma} nor -\eqref{sigma} holds.
Indeed, if we let
\begin{equation*}
	y_\sigma^\pm = \frac{-1 \pm \sqrt{1 + 4T\sigma}}{2T\sigma}
\end{equation*}
and consider the Dirac masses $\delta_{y_\sigma^\pm}$, we see that each is simply a push-forward of the initial mass $\delta_1$ onto an optimal point.
Thus,
\begin{equation*}
	\sigma_T(\delta_{y_\sigma^\pm}) \in \Omega(\sigma),
\end{equation*}
We compute
\begin{equation*}
	\sigma_T(\delta_{y_\sigma^\pm}) = \frac{1}{3}\int_{\R} x^3 \dif \delta_{y_\sigma^\pm}(x) = \frac{1}{3}(y_\sigma^\pm)^3 = \frac{\pm (1+T\sigma)\sqrt{1+4T\sigma} - 3T\sigma - 1}{6(T\sigma)^3} =: \psi^\pm(\sigma).
\end{equation*}
Again we have
\begin{equation*}
	\lim_{\tilde{\sigma} \to \sigma} \psi^{+}(\tilde{\sigma}) - \psi^-(\sigma) = \frac{(1+T\sigma)\sqrt{1+4T\sigma}}{3(T\sigma)^3} > 0.
\end{equation*}
Hence for $\tilde{\sigma} > \sigma$ sufficiently small, we have
\begin{equation*}
	\psi^{+}(\tilde{\sigma}) - \psi^-(\sigma) > \tilde{\sigma} - \sigma
	\quad \text{and} \quad
	\psi^-(\tilde{\sigma}) - \psi^{+}(\sigma) < 0.
\end{equation*}
It follows that neither \eqref{sigma} nor -\eqref{sigma} holds.
\end{proof}

\subsection{\eqref{D} does not imply \eqref{X} or \eqref{sigma}}
In this subsection, we will consider cases where $G$ is displacement monotone, i.e. it satisfied \eqref{D}. It is well-known (see \cite[Lemma 2.6]{gangbo2021mean} and \cite[Lemma 2.3]{meszaros2021mean}) that this property implies that $x\mapsto G(x,m)$ is convex for all $m\in\sP_2(\R^d)$, and thus \eqref{eq:1st order cond} can be uniquely solved by
\begin{equation*}
	y = \del{I + T D_x G(\cdot,m)}^{-1}(x).
\end{equation*}
It follows that $\tilde E_T$ is a single-valued function; for a given initial random variable $X_0$ whose law is $m_0$ it is given by
\begin{equation} \label{eq:hat E1 1}
	\tilde E_T(X) = \del{I + T D_x G(\cdot,\s{L}_X)}^{-1}(X_0).
\end{equation}
Assuming the structure $G(x,m) = g(x,\sigma_T(m))$, we likewise have that $\Omega(\sigma)$ is single-valued with
\begin{equation} \label{eq:E2 1}
	\Omega(\sigma) = \sigma_T\del{\del{I + T D_x g(\cdot,\sigma)}^{-1}_\sharp m_0}.
\end{equation}
Equations \eqref{eq:hat E1 1} and \eqref{eq:E2 1} provide explicit formulas for $\tilde E_T$ and $\Omega$ in terms of $g$ (or $G$).

Let us take $d = k = 1$.
Recall \cite{gangbo2021mean,meszaros2021mean} that $G$ is displacement monotone if and only if
\begin{equation} \label{eq:displacement mon}
	\int_{\R} \partial_{xx}^2 G(x,m)v(x)^2 \dif m(x) + \iint_{\R\times\R} \partial_{xm}^2 G(x,m,y)v(x)v(y)\dif m(x)\dif m(y) \geq 0, 
\end{equation}
$\forall m \in \sr{P}_2(\R), \ \forall v \in L^2_m(\R).$
We suppose $G$ has the structure $G(x,m) = g(x,\sigma_T(m))$ where $\sigma_T:\sr{P}_2(\R) \to \bb{R}$.
Then \eqref{eq:displacement mon} becomes
\begin{equation} \label{eq:displacement mon1}
	\int_{\R} \partial_{xx}^2 g(x,\sigma_T(m))v(x)^2 \dif m(x) + \int_\R \partial_{x\sigma}^2 g(x,\sigma_T(m))v(x) \dif m(x)\int_\R D_m \sigma_T(m,y)v(y)\dif m(y) \geq 0,
\end{equation}
$ \quad \forall m \in \sr{P}_2(\R), \ \forall v \in L^2_m(\R).$ In the view of this, we can state the following result.

\begin{proposition}
Let $g:\R\times\R\to\R$ given by $g(x,\sigma) = \frac{1}{2} x^2 \phi(\sigma)$ and let $\sigma_T:\sP_2(\R)\to\R$ defined as $\sigma_T(m) = \frac{1}{2}\int_{\R} x^2 \dif m(x)$, where $\phi:\bb{R} \to \bb{R}$ is a smooth, non-negative function that satisfies
\begin{equation} \label{eq:diff ineq}
	\phi(\sigma) \geq -2\sigma \phi'(\sigma).
\end{equation}
Then, $G:\R\times\sP_2(\R)\to\R$ given by $G(x,m)=g(x,\sigma_T(m))$ satisfies \eqref{D}; however, in general it does not satisfy $\pm$\eqref{X} or $\pm$\eqref{sigma}.
\end{proposition}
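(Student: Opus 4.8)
The plan is to separate a positive part (verifying \eqref{D}) from a negative part (exhibiting one admissible $\phi$ for which all of $\pm$\eqref{X} and $\pm$\eqref{sigma} fail).

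\textbf{Positive part.} I would substitute the structure of $g$ and $\sigma_T$ into the displacement monotonicity criterion \eqref{eq:displacement mon1}. Since $\partial_{xx}^2 g(x,\sigma) = \phi(\sigma)$, $\partial_{x\sigma}^2 g(x,\sigma) = x\phi'(\sigma)$, and $D_m\sigma_T(m,y) = y$, the criterion becomes, with $\sigma = \sigma_T(m) = \tfrac12\int_\R x^2\dif m \ge 0$,
\[
\phi(\sigma)\int_\R v(x)^2\dif m(x) + \phi'(\sigma)\Big(\int_\R x\,v(x)\dif m(x)\Big)^2 \ge 0, \qquad \forall\, m\in\sP_2(\R),\ \forall\, v\in L^2_m(\R).
\]
If $\phi'(\sigma)\ge 0$ this is clear from $\phi\ge 0$. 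If $\phi'(\sigma)<0$, multiply the Cauchy--Schwarz bound $\big(\int_\R x\,v\dif m\big)^2\le\int_\R x^2\dif m\int_\R v^2\dif m = 2\sigma\int_\R v^2\dif m$ by the negative number $\phi'(\sigma)$, reversing it; the left-hand side above is then at least $\big(\phi(\sigma)+2\sigma\phi'(\sigma)\big)\int_\R v^2\dif m\ge 0$ by the standing hypothesis \eqref{eq:diff ineq}. Hence \eqref{D} holds.

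\textbf{Negative part.} Take $\phi(\sigma) = (1+\sigma)^{-1/2}$ on $[0,\infty)$, extended to a smooth nonincreasing nonnegative function on $\R$ (only arguments $\sigma\ge 0$ occur, since $\sigma_T$ maps into $[0,\infty)$, and \eqref{eq:diff ineq} is automatic for $\sigma<0$ when $\phi$ is nonincreasing there); note $\phi(\sigma)+2\sigma\phi'(\sigma) = (1+\sigma)^{-3/2}>0$, so \eqref{eq:diff ineq} holds. Since $D_x G(x,m) = x\phi(\sigma_T(m))$, the map $I+TD_xG(\cdot,m)$ is multiplication by $1+T\phi(\sigma_T(m))$, so \eqref{eq:hat E1 1} and \eqref{eq:E2 1} give $\tilde E_T(X) = (1+T\phi(\tfrac12\E[X^2]))^{-1}X_0$ and $\mathcal{E}_T(\sigma) = \Omega(\sigma) = (1+T\phi(\sigma))^{-2}\sigma_0$, where $\sigma_0 := \sigma_T(m_0)$. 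Now \eqref{sigma} amounts to nondecreasingness of $\sigma\mapsto\sigma-\mathcal{E}_T(\sigma)$ on $[0,\infty)$ and -\eqref{sigma} to nonincreasingness; but the derivative $1 + 2T\sigma_0\phi'(\sigma)(1+T\phi(\sigma))^{-3}$ of $\sigma-\mathcal{E}_T(\sigma)$ tends to $1>0$ as $\sigma\to\infty$ while at $\sigma=0$, $T=1$ it equals $1-\sigma_0/8$, which is $<0$ as soon as $m_0$ has $\sigma_0>8$. Thus $\sigma-\mathcal{E}_T(\sigma)$ is non-monotone and neither \eqref{sigma} nor -\eqref{sigma} holds. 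For \eqref{X} I would test the one-parameter family $X=\alpha X_0$, $\alpha>0$: here $\tilde E_T(\alpha X_0) = \beta(\alpha)X_0$ with $\beta(\alpha) = (1+T\phi(\alpha^2\sigma_0))^{-1}$, and the \eqref{X} inequality for $\alpha_1 X_0,\alpha_2 X_0$ reduces, after dividing by $\E[X_0^2]=2\sigma_0$, to $(\beta(\alpha_1)-\beta(\alpha_2))(\alpha_1-\alpha_2)\le(\alpha_1-\alpha_2)^2$, i.e.~to nondecreasingness of $\alpha\mapsto\alpha-\beta(\alpha)$ on $(0,\infty)$. The derivative $1+2T\alpha\sigma_0\phi'(\alpha^2\sigma_0)(1+T\phi(\alpha^2\sigma_0))^{-2}$ of $\alpha-\beta(\alpha)$ tends to $1>0$ as $\alpha\to 0^+$ and as $\alpha\to\infty$, but is $<0$ at $\alpha=\sigma_0^{-1/2}$ once $\sigma_0$ is large (with $T=1$); so $\alpha-\beta(\alpha)$ is non-monotone and neither \eqref{X} nor -\eqref{X} holds.

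The main obstacle is the bookkeeping in the negative part: one must confirm that $(1+\sigma)^{-1/2}$ genuinely satisfies \eqref{eq:diff ineq} — this borderline decay is exactly what keeps $\phi$ admissible while still allowing the derivative of $\sigma-\mathcal{E}_T(\sigma)$ (and of $\alpha-\beta(\alpha)$) to dip below $0$ for a suitable initial measure — and that it suffices to test \eqref{X} on scalar multiples of the fixed $X_0$; it does, since one violating pair is enough and $\range\tilde E_T$ already consists of such multiples. The positive part is routine once the reduction through \eqref{eq:displacement mon1} is carried out.
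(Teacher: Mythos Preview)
Your proof is correct and follows essentially the same approach as the paper. The positive part is identical (reduce \eqref{D} to \eqref{eq:displacement mon1}, split on the sign of $\phi'$, use Cauchy--Schwarz); the negative part differs only cosmetically: you take $\phi(\sigma)=(1+\sigma)^{-1/2}$ whereas the paper patches a smooth function equal to $\sigma^{-1/2}$ on $[1,\infty)$, and you test \eqref{X} along the one-parameter family $X=\alpha X_0$ whereas the paper computes the full Fr\'echet derivative $D\tilde E_T(X)Y$ before specializing to an equivalent family---both arguments ultimately hinge on the same computation that $\mathcal{E}_T'(\sigma)$ (resp.~$\beta'(\alpha)$) can be made to straddle $1$ by choosing the initial second moment large.
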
 

\begin{proof}
With our specific structural condition, the inequality \eqref{eq:displacement mon1} that we need to check, becomes
\begin{equation} \label{eq:displacement mon2}
	\phi\del{\frac{1}{2}\int_{\R} x^2 \dif m(x)}\int_\R v(x)^2 \dif m(x) + \phi'\del{\frac{1}{2}\int_\R x^2 \dif m(x)}\del{\int_\R xv(x) \dif m(x)}^2 \geq 0, \ \forall m \in \sr{P}_2(\R), \ \forall v \in L^2_m(\R).
\end{equation}

Now we claim that \eqref{eq:diff ineq} implies \eqref{eq:displacement mon2}. To see this, note that if $\phi'\del{\frac{1}{2}\int_{\R} x^2 \dif m(x)} \geq 0$ then there is nothing to show because both terms are non-negative.
If $\phi'\del{\frac{1}{2}\int_{\R} x^2 \dif m(x)} < 0$, then by \eqref{eq:diff ineq} and the Cauchy--Schwarz inequality, we obtain
\begin{equation*}
	\begin{split}
		\phi\del{\frac{1}{2}\int_{\R} x^2 \dif m(x)}\int_{\R} v(x)^2 \dif m(x)
		&\geq -\phi'\del{\frac{1}{2}\int_{\R} x^2 \dif m(x)}\int_{\R} x^2 \dif m(x)\int_{\R} v(x)^2 \dif m(x)\\
		&\geq -\phi'\del{\frac{1}{2}\int_{\R} x^2 \dif m(x)}\del{\int_{\R} xv(x) \dif m(x)}^2,
	\end{split}
\end{equation*}
which implies \eqref{eq:displacement mon2}.

Let us now show that, even if \eqref{eq:diff ineq} and therefore \eqref{D} hold, \eqref{X} may not hold.
Now \eqref{eq:hat E1 1} becomes
\begin{equation*}
	\tilde E_T(X) = \frac{X_0}{1 + T\phi\del{\frac{1}{2}\bb{E}[X^2]}}.
\end{equation*}
We differentiate to get
\begin{equation*}
	D\tilde E_T(X)Y = -\frac{X_0}{\del{1 + T\phi\del{\frac{1}{2}\bb{E}[X^2]}}^2}T\phi'\del{\frac{1}{2}\bb{E}[X^2]} \bb{E}[XY].
\end{equation*}
Let $\phi$ be any smooth non-negative function on $\intco{0,\infty}$ that satisfies \eqref{eq:diff ineq} and equals $\sigma^{-1/2}$ on $\intco{1,\infty}$.
Then as long as $\frac{1}{2}\bb{E}[X^2] \geq 1$ we have
\begin{equation}
	\bb{E}\sbr{\del{D\tilde E_T(X)Y}Y} = \frac{\sqrt{2}T\bb{E}[X_0 Y]\bb{E}[XY]}{\del{\bb{E}[X^2]^{1/2} + \sqrt{2}T}^2 \bb{E}[X^2]^{1/2}}.
\end{equation}

Choose $X_0$ (i.e.~choose $m_0$ with $\mathcal{L}_{X_0}=m_0$) and $T$ such that
\begin{equation*}
	(\bb{E}[X_0^2])^{\frac12} > \frac{\sqrt{2}(1+T)^2}{T}.
\end{equation*}
Let $X = \alpha \frac{X_0}{\bb{E}[X_0^2]^{1/2}}$ for some parameter $\alpha \geq \sqrt{2}$, and let $Y = \frac{X_0}{\bb{E}[X_0^2]^{1/2}}$.
Then we have
\begin{equation*}
	\bb{E}\sbr{\del{D\tilde E_T(X)Y}Y} = \frac{\sqrt{2}T (\bb{E}[X_0^2])^{\frac12}}{\del{\alpha + \sqrt{2}T}^2}.
\end{equation*}
Since
\begin{equation*}
	\lim_{\alpha \to \sqrt{2}} \frac{\sqrt{2}T (\bb{E}[X_0^2])^{\frac12}}{\del{\alpha + \sqrt{2}T}^2} = \frac{T(\bb{E}[X_0^2])^{\frac12}}{\sqrt{2}(1+T)^2} > 1,
\end{equation*}
we see that for some $X$ and $Y$, we have $\bb{E}\sbr{\del{D\tilde E_T(X)Y}Y} > 1 = \bb{E}[Y^2]$.
Thus \eqref{X} does not hold.
On the other hand, by taking $\alpha$ large we can also have $\bb{E}\sbr{\del{D\tilde E_T(X)Y}Y} < \bb{E}[Y^2]$, and thus -\eqref{X} does not hold, either.

Similarly, let us now show that, even if \eqref{eq:diff ineq} and therefore \eqref{D} hold, \eqref{sigma} may not hold.
Now \eqref{eq:E2 1} becomes
\begin{equation*}
	\Omega(\sigma) = \frac{1}{2}\int_{\R} \frac{x^2}{\del{1 + T\phi(\sigma)}^2}\dif m_0(x).
\end{equation*}
Differentiating, we get
\begin{equation*}
	\Omega'(\sigma) = -\int_{\R} \frac{x^2T\phi'(\sigma)}{\del{1 + T\phi(\sigma)}^3}\dif m_0(x).
\end{equation*}
Let us choose $m_0$ and $T$ such that
\begin{equation}
	\frac{T}{2(1+T)^3} \int_{\R} x^2 \dif m_0(x) > 1.
\end{equation}
Recall that $\phi$ is any smooth non-negative function on $\intco{0,\infty}$ that satisfies \eqref{eq:diff ineq} and equals $\sigma^{-1/2}$ on $\intco{1,\infty}$, which can be made even smaller if necessary.
We have
\begin{equation}
	\Omega'(\sigma) = \frac{T}{2(\sigma^{1/2} + T)^3}\int x^2 \dif m_0(x) \quad \forall \sigma \geq 1.
\end{equation}
Since
\begin{equation}
	\frac{T}{2(\sigma^{1/2} + T)^3}\int x^2 \dif m_0(x) \to \frac{T}{2(1+T)^3} \int x^2 \dif m_0(x) > 1 \quad \text{as} \quad \sigma \to 1+,
\end{equation}
we have $\Omega'(\sigma) > 1$ for $\sigma$ close to $1$.
It follows that \eqref{sigma} does not hold.
On the other hand, we will also have $\lim_{\sigma \to \infty} \Omega'(\sigma) = 0$, so $\Omega'(\sigma) < 1$ for all $\sigma$ large enough.
Thus -\eqref{sigma} does not hold either.
\end{proof}

\subsection{\eqref{X} does not imply \eqref{LL} or \eqref{D}}

Assume $G(x,m) = f(x)h(m)$ for some $f:\bb{R}^d \to \bb{R}$ and $h:\sr{P}_2(\bb{R}^d) \to \bb{R}$.
We will assume that $f$ and $h$ are smooth, and that $f$ is convex.
Equation \eqref{eq:hat E1 1} becomes
\begin{equation}
	\tilde E_T(X) = \del{I + Th(\s{L}_X)D f(\cdot)}^{-1}(X_0).
\end{equation}
By the implicit function theorem, one deduces that $\tilde E_T:\bb{H} \to \bb{H}$ is differentiable, and by implicit differentiation, one derives that
\begin{equation} \label{eq:DhatE1}
	D\tilde E_T(X)Y = -T\del{I + Th(\s{L}_X)D^2 f\del{\tilde E_T(X)}}^{-1}Df\del{\tilde E_T(X)} \bb{E}\sbr{D_m h(\s{L}_X,X)Y} \quad \forall X,Y \in \bb{H}.
\end{equation}

\begin{proposition} \label{prop:fhX}
	Suppose that $G(x,m) = f(x)h(m)$ for some $f:\bb{R}^d \to \bb{R}$ and $h:\sr{P}_2(\bb{R}^d) \to \bb{R}$.
	Assume that $h$ is continuously differentiable, and that for some $a,b > 0$ the following estimates hold:
	\begin{equation} \label{eq:h assumption}
		h(m) \geq a, \quad \abs{D_m h(m,x)} \leq b, \quad \forall m \in \sr{P}_2(\R^d), \forall x \in \bb{R}^d.
	\end{equation}
	Assume that $f:\R^d\to\R$ is $\s{C}^2$ smooth and convex, and that the following estimate holds:
	\begin{equation} \label{eq:f assumption}
		D^2 f(x) \geq \frac{b}{a}\abs{Df(x)} \quad \forall x \in \bb{R}^d.
	\end{equation}
	Then \eqref{X} is satisfied.
\end{proposition}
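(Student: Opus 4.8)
The plan is to use the explicit derivative formula \eqref{eq:DhatE1} together with a standard reduction of the monotonicity of $I-\tilde E_T$ to a pointwise differential inequality. Since $f$ is convex, $\tilde E_T$ is single-valued and, by the implicit function theorem, of class $C^1$ on $\bb H$; hence for any $X_1,X_2\in\bb H$, writing $X_t:=X_2+t(X_1-X_2)$,
\begin{equation*}
\langle \tilde E_T(X_1)-\tilde E_T(X_2),\,X_1-X_2\rangle_{\bb H}=\int_0^1\langle D\tilde E_T(X_t)(X_1-X_2),\,X_1-X_2\rangle_{\bb H}\dif t.
\end{equation*}
Thus it suffices to establish $\langle D\tilde E_T(X)Y,Y\rangle_{\bb H}\le\|Y\|_{\bb H}^2$ for all $X,Y\in\bb H$; integrating over $t\in[0,1]$ then gives exactly \eqref{X}, and uniformity in $m_0$ and $T$ is automatic since the argument below exploits no relation between them.

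For the differential bound I would fix $X,Y$ and abbreviate $Z:=\tilde E_T(X)$ and $\lambda:=h(\s L_X)$. By \eqref{eq:DhatE1}, $\langle D\tilde E_T(X)Y,Y\rangle_{\bb H}$ equals $-T$ times the scalar $\bb E[D_mh(\s L_X,X)\cdot Y]$ times the scalar $\bb E\bigl[(I+T\lambda D^2f(Z))^{-1}Df(Z)\cdot Y\bigr]$. The first scalar is bounded in absolute value by $b\|Y\|_{\bb H}$, using $|D_mh|\le b$ and Cauchy--Schwarz in $\bb H$; the second, again by Cauchy--Schwarz, by $\bigl\|(I+T\lambda D^2f(Z))^{-1}Df(Z)\bigr\|_{\bb H}\,\|Y\|_{\bb H}$. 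The crux is the pointwise matrix estimate: convexity of $f$, the lower bound $\lambda\ge a$ from \eqref{eq:h assumption}, and \eqref{eq:f assumption} together give $I+T\lambda D^2f(z)\ge(1+Tb|Df(z)|)I$ as symmetric matrices, so that
\begin{equation*}
\bigl|(I+T\lambda D^2f(z))^{-1}Df(z)\bigr|\le\frac{|Df(z)|}{1+Tb|Df(z)|}\le\frac{1}{Tb}\qquad\text{for all }z\in\R^d.
\end{equation*}
Multiplying the three bounds gives $|\langle D\tilde E_T(X)Y,Y\rangle_{\bb H}|\le T\cdot b\|Y\|_{\bb H}\cdot\tfrac1{Tb}\|Y\|_{\bb H}=\|Y\|_{\bb H}^2$, as required.

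The only delicate step is the matrix inequality in the second display, in particular that the constants cancel exactly: the factor $b/a$ in \eqref{eq:f assumption} is calibrated so that, after absorbing $\lambda\ge a$, the lower bound on $I+T\lambda D^2f(z)$ carries the coefficient $Tb$ --- precisely what is needed to cancel the leading $T$ in \eqref{eq:DhatE1} and the constant $b$ from $|D_mh|\le b$. Everything else --- the reduction to the differential inequality and the two applications of Cauchy--Schwarz --- is routine. I note in passing that the computation in fact yields the two-sided bound $|\langle D\tilde E_T(X)Y,Y\rangle_{\bb H}|\le\|Y\|_{\bb H}^2$, of which only the upper inequality is needed for \eqref{X}.
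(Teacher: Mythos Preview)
Your proof is correct and follows essentially the same approach as the paper's. The paper also reduces to the differential inequality $\bb E[(D\tilde E_T(X)Y)\cdot Y]\le\bb E|Y|^2$ via \eqref{eq:DhatE1} and the same pointwise matrix bound; the only cosmetic difference is that the paper keeps the factor $\bb E[|Y|]$ (rather than passing to $\|Y\|_{\bb H}$ via Cauchy--Schwarz) and applies Jensen at the end, and it leaves the passage from the differential inequality to \eqref{X} implicit, whereas you spell out the segment-integration argument.
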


\begin{proof}
	Using \eqref{eq:DhatE1} and the estimates on $h$ and $f$, we get
	\begin{equation}
		\begin{split}
			\bb{E}\sbr{(D\tilde E_T(X)Y) \cdot Y} &= -T\bb{E}\sbr{\del{\del{I + Th(\s{L}_X)D^2 f\del{\tilde E_T(X)}}^{-1}Df\del{\tilde E_T(X)}} \cdot Y} \bb{E}\sbr{D_m h(\s{L}_X,X)Y}\\
			&\leq \bb{E}\sbr{\del{\del{I + Ta \frac{b}{a}\abs{D f\del{\tilde E_T(X)}}}^{-1}T \abs{Df\del{\tilde E_T(X)}}} \abs{Y}} \bb{E}\sbr{b\abs{Y}}\\
			&\leq \bb{E}\sbr{\abs{Y}}^2 \leq \bb{E}\sbr{\abs{Y}^2} \quad \forall X,Y \in \bb{H},
		\end{split}
	\end{equation}
	from which we deduce \eqref{X}.
\end{proof}

\begin{remark}
Let us now remark on the applicability of Proposition \ref{prop:fhX}. The assumptions of this proposition, namely \eqref{eq:f assumption}, imply that $Df$ and $D^2 f$ grow exponentially and are therefore unbounded.

If one wishes to impose other type of potential assumptions (that are more natural in the literature on mean field games), such as $D_xG$ is Lipschitz continuous or has a linear growth at infinity, it is still possible to satisfy a conditional version of \eqref{X}.
	Namely, we fix a bounded set $B \subset \bb{R}^d$ and consider only those initial measures $m_0$ supported in $B$, hence only initial random variables $X_0$ with values in $B$.
	Then there exists another bounded set $B' \subset \bb{R}^d$ such that $\tilde E_T(X)$ necessarily takes values only in $B'$.
	In this case, the estimate \eqref{eq:f assumption} would only be required to hold for $x \in B'$, and it would still follow that \eqref{X} holds.
\end{remark}

We now construct an example satisfying \eqref{X} but neither \eqref{LL} nor \eqref{D}.
\begin{proposition}
Let $d = 1$, $f(x) = e^{-x}$, and $h(m) = 2 + \int_{\R} \sin x \dif m(x)$, and assume that $G:\R\times\sP_2(\R)\to\R$ is given by $G(x,m) = f(x)h(m)$. Then $G$ satisfies \eqref{X}, but it does not satisfy \eqref{D} and \eqref{LL}.
\end{proposition}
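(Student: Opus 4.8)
The plan is to obtain \eqref{X} as a direct application of Proposition \ref{prop:fhX}, and to refute \eqref{LL} and \eqref{D} by explicit test configurations; the product structure $G(x,m)=f(x)h(m)$ makes every relevant bilinear expression factor. For \eqref{X}: since $\sin\ge -1$ we have $h(m)=2+\int_{\R}\sin x\,\dif m(x)\ge 1$, so one takes $a=1$; $h$ is affine in $m$ with Wasserstein gradient $D_m h(m,x)=\cos x$ (independent of $m$), hence $\abs{D_m h(m,x)}\le 1$ and one takes $b=1$; and $f(x)=e^{-x}$ is smooth and convex with $D^2 f(x)=e^{-x}=\abs{Df(x)}=\tfrac{b}{a}\abs{Df(x)}$, so \eqref{eq:f assumption} holds (with equality). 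Proposition \ref{prop:fhX} then yields \eqref{X}.

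To refute \eqref{LL}: since $G(x,m_1)-G(x,m_2)=f(x)\del{h(m_1)-h(m_2)}$, the Lasry--Lions pairing equals $\del{h(m_1)-h(m_2)}\int_{\R} f\,\dif(m_1-m_2)=\del{\int_{\R}\sin\,\dif(m_1-m_2)}\del{\int_{\R}e^{-x}\,\dif(m_1-m_2)(x)}$. Choosing $m_1=\delta_{\pi/2}$ and $m_2=\delta_0$ makes the first factor $\sin(\pi/2)-\sin 0=1>0$ and the second $e^{-\pi/2}-1<0$, so the pairing is negative and \eqref{LL} fails.

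To refute \eqref{D}, I would use the infinitesimal characterization \eqref{eq:displacement mon}: here $\partial_{xx}^2 G(x,m)=f''(x)h(m)=e^{-x}h(m)$ and $\partial_{xm}^2 G(x,m,y)=f'(x)D_m h(m,y)=-e^{-x}\cos y$, so displacement monotonicity of $G$ is equivalent to
\[ h(m)\int_{\R} e^{-x}v(x)^2\,\dif m(x)\ \ge\ \del{\int_{\R} e^{-x}v(x)\,\dif m(x)}\del{\int_{\R}\cos y\,v(y)\,\dif m(y)}\qquad \forall m\in\sP_2(\R),\ \forall v\in L^2_m(\R). \]
I would then test with $m=\tfrac{1}{2}\delta_{-2\pi}+\tfrac{1}{2}\delta_0$, $v(-2\pi)=1$, $v(0)=5$: the atom at $-2\pi$ injects a huge weight $e^{2\pi}$ into $\int e^{-x}v\,\dif m$ while carrying $\cos(-2\pi)=1$, which is exactly the configuration that tips the balance. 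A direct evaluation gives left-hand side $e^{2\pi}+25$ and right-hand side $\tfrac{3}{2}(e^{2\pi}+5)$, whose difference $-\tfrac{1}{2} e^{2\pi}+\tfrac{35}{2}$ is negative since $e^{2\pi}>35$. Hence \eqref{D} fails.

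The \eqref{X} and \eqref{LL} steps are bookkeeping; the only genuinely delicate point is the construction for \eqref{D}. Two things must be got right: the signs in \eqref{eq:displacement mon} (the Wasserstein gradient $D_m h=\cos$ and $f'=-e^{-x}$), and, more substantially, the choice of $(m,v)$ for which the off-diagonal term overpowers the always-nonnegative diagonal term $h(m)\int e^{-x}v^2\,\dif m$. The heuristic guiding the choice is that the diagonal term scales with $\int e^{-x}v^2\,\dif m$ while the off-diagonal term scales with $\int e^{-x}v\,\dif m$ times an $O(1)$ quantity, so placing a small value of $v$ at a very negative $x$ with $\cos x=1$ makes the off-diagonal contribution --- which is linear in the huge weight $e^{-x}$ --- dominate. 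With this heuristic in hand, the explicit pair $m=\tfrac{1}{2}(\delta_{-2\pi}+\delta_0)$, $v=(1,5)$ closes the argument.
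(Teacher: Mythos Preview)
Your proof is correct and follows essentially the same strategy as the paper: Proposition~\ref{prop:fhX} with $a=b=1$ for \eqref{X}, and explicit test configurations to refute \eqref{LL} and \eqref{D}. The only differences are cosmetic. For \eqref{LL} you work directly from the definition with Dirac masses $m_1=\delta_{\pi/2}$, $m_2=\delta_0$, whereas the paper passes through the differential characterization $\bb{E}\big[\widetilde{\bb{E}}[D_{xm}^2 G(X,\s L_X,\tilde X)\tilde Y]\cdot Y\big]\ge 0$ and tests with $X=0$, $Y=1$; your route is slightly more elementary since it avoids invoking that equivalence. For \eqref{D} both arguments use the infinitesimal form \eqref{eq:displacement mon} but with different witnesses: the paper takes $X$ uniform on $\{\pm 2n\pi\}$ and $Y=e^{X/2}$ (so that $e^{-X}Y^2\equiv 1$ and the inequality collapses to $2-\cosh^2(n\pi)\ge 0$), while you take $m=\tfrac12(\delta_{-2\pi}+\delta_0)$ with $v=(1,5)$. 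Both exploit the same mechanism---a large weight $e^{-x}$ at a point where $\cos x=1$ inflates the cross term---and your arithmetic checks out.
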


\begin{proof}
Notice that the hypotheses of Proposition \ref{prop:fhX} are satisfied, with $a = b = 1$, and thus \eqref{X} holds.

Let us show that \eqref{D} does not hold.
Recall from \cite{gangbo2021mean,meszaros2021mean} that \eqref{D} is equivalent to
\begin{equation*}
	\bb{E}\sbr{\del{D_{xx}^2 G(X,\s{L}_X)Y} \cdot Y} + \bb{E}\sbr{\widetilde{\bb{E}}\sbr{D_{xm}^2 G(X,\s{L}_X,\tilde X)\tilde Y} \cdot Y} \geq 0,
\end{equation*}
for any $X,Y\in\bb{H}$ (where $\tilde Z$ stands for an independent copy of a random variable $Z\in\bb{H}$).
This in this case can be written
\begin{equation*}
	h(\s{L}_X)\bb{E}\sbr{\del{D^2 f(X)Y} \cdot Y} + \bb{E}\sbr{Df(X)Y} \cdot \bb{E}\sbr{D_m h(\s{L}_X,X)Y} \geq 0
\end{equation*}
or simply
\begin{equation} \label{eq:DX contradiction}
	\del{2 + \bb{E}\sbr{\sin(X)}}\bb{E}\sbr{e^{-X}Y^2} - \bb{E}\sbr{e^{-X}Y}\bb{E}\sbr{\cos(X)Y} \geq 0.
\end{equation}
For some $n \in \bb{N}$ to be determined, let $X$ be a random variable defined by $\bb{P}(X = 2n\pi) = \bb{P}(X = -2n\pi) = 1/2$, and let $Y = e^{X/2}$.
Note that $\sin(X) = 0$, $\cos(X) = 1$, and $e^{-X}Y^2 = 1$ a.s., and
\begin{equation*}
	\bb{E}\sbr{e^{-X}Y} = \bb{E}\sbr{e^{-X/2}} = \frac{1}{2}\del{e^{n\pi} + e^{-n\pi}} = \bb{E}\sbr{Y} = \cosh(n\pi).
\end{equation*}
Then \eqref{eq:DX contradiction} becomes
\begin{equation} \label{eq:DX contradiction1}
	2 - \cosh(n\pi) \geq 0.
\end{equation}
By choosing $n$ large enough, we derive a contradiction.
Thus \eqref{D} does not hold.

To show that \eqref{LL} does not hold is even more straightforward.
For this recall from \cite{gangbo2021mean,meszaros2021mean} that \eqref{LL} is equivalent to
\begin{equation}
	\bb{E}\sbr{\widetilde{\bb{E}}\sbr{D_{xm}^2 G(X,\s{L}_X,\tilde X)\tilde Y} \cdot Y} \geq 0, \ \ \forall X,Y\in\bb{H},
\end{equation}
which now reduces to
\begin{equation} \label{eq:LLX contradiction}
	- \bb{E}\sbr{e^{-X}Y}\bb{E}\sbr{\cos(X)Y} \geq 0.
\end{equation}
We may use the same example as above, or indeed simpler examples such as $X = 0$ and $Y = 1$, to contradict \eqref{eq:LLX contradiction}.

\end{proof}

\subsection{\eqref{X} does not imply \eqref{sigma}}

Definition \ref{def:X} makes \eqref{X} a rather strong condition, since it is supposed to be unconditional on the initial measure $m_0$ and the time horizon $T$.
It is unclear whether this necessarily implies the condition \eqref{sigma}.
In this subsection, we will provide an example for which \eqref{X} does hold, but only if we restrict to initial measures $m_0$ having uniformly bounded second moment.
That is, we assume that there exists a constant $M > 0$ such that $\int_{\R^d} \abs{x}^2 \dif m_0(x) \leq M^2$ for all initial measures $m_0$ in consideration.
Equivalently, this means that $\bb{E}[\, \abs{X_0}^2] \leq M^2$ for every initial random variable appearing in the definition of $\tilde E_T$ \eqref{eq:hat E1 1}.

\begin{proposition}
Take $d = k = 1$ and let $M>0$.
Let $G:\R\times\sP_2(\R)\to\R$ defined as $G(x,m) = g(x,\sigma_T(m))$, with $g:\R\times\R\to\R$, $\sigma_T:\sP_2(\R)\to\R$ and $\psi:\R\to\R$ be  given as $g(x,\sigma) = \frac{1}{2}x^2 \sigma$, $\sigma_T(m) =\int_{\R} \psi \dif m$, with $\psi$ continuously differentiable satisfying $\psi'(x)^2 \leq \frac{1}{M^2}\psi(x)$ and $\psi(x) \geq 1$, for $x\in\R$. Then, when restricted to the set $\left\{X\in\bb{H}: \bb{E}[\, \abs{X_0}^2] \leq M^2\right\}$, $\tilde E_T$ satisfies \eqref{X} strictly. However, in general neither \eqref{sigma} nor -\eqref{sigma} holds true on the set of measures $\left\{m\in\sP_2(\R): \int_{\R^d} \abs{x}^2 \dif m(x) \leq M^2\right\}.$
\end{proposition}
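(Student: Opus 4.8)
The plan is to handle the two assertions separately, relying on the explicit formulas \eqref{eq:hat E1 1} and \eqref{eq:E2 1}.

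\textbf{The conditional \eqref{X}.} Since $D_xg(x,\sigma)=x\sigma$ and $\sigma_T(m)=\int_\R\psi\dif m\ge 1$, the map $y\mapsto y+TD_xG(y,m)=y\del{1+T\sigma_T(m)}$ is invertible, so by \eqref{eq:hat E1 1} the operator $\tilde E_T$ is single valued:
$$
\tilde E_T(X)=\frac{X_0}{1+T\,\bb E[\psi(X)]}.
$$
The constraint $|\psi'(x)|\le M^{-1}\sqrt{\psi(x)}$ forces $\psi$ and $\psi'$ to have at most quadratic and linear growth respectively, so $X\mapsto\bb E[\psi(X)]$ is continuously differentiable on $\bb H$ and
$$
D\tilde E_T(X)Y=-\frac{T\,\bb E[\psi'(X)Y]}{\del{1+T\,\bb E[\psi(X)]}^2}\,X_0.
$$
Writing $s:=\bb E[\psi(X)]\ge 1$, Cauchy--Schwarz together with $\bb E[X_0^2]\le M^2$ and $\bb E[\psi'(X)^2]\le M^{-2}\bb E[\psi(X)]$ gives $|\bb E[X_0Y]|\,|\bb E[\psi'(X)Y]|\le s^{1/2}\bb E[Y^2]$, whence
$$
\abs{\bb E\sbr{(D\tilde E_T(X)Y)\cdot Y}}\le\frac{T\,s^{1/2}}{\del{1+Ts}^2}\,\bb E[Y^2]\le\tfrac14\,\bb E[Y^2],
$$
using $(1+Ts)^2\ge 4Ts$ and $s\ge1$. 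Integrating this bound along the segment $t\mapsto X^2+t(X^1-X^2)$, along which $X_0$ (hence the constraint $\bb E[X_0^2]\le M^2$) is unchanged, yields $\bb E\sbr{(\tilde E_T(X^1)-\tilde E_T(X^2))\cdot(X^1-X^2)}\le\tfrac14\bb E\abs{X^1-X^2}^2<\bb E\abs{X^1-X^2}^2$ for $X^1\ne X^2$, i.e.\ the strict form of \eqref{X} on the stated set.

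\textbf{Failure of the $\Sigma$-type monotonicities.} By \eqref{eq:E2 1}, $g(\cdot,\sigma)$ being uniformly convex, $\Omega$ is single valued,
$$
\Omega(\sigma)=\int_\R\psi\del{\tfrac{x}{1+T\sigma}}\dif m_0(x),\qquad
\Omega'(\sigma)=-\frac{T}{(1+T\sigma)^2}\int_\R x\,\psi'\del{\tfrac{x}{1+T\sigma}}\dif m_0(x),
$$
and, $\Omega$ being a $C^1$ scalar map, \eqref{sigma} amounts to $\Omega'\le1$ and $-$\eqref{sigma} to $\Omega'\ge1$ on $\range\Sigma_T$. To break $-$\eqref{sigma} it is enough to produce one parameter with $\Omega'<1$: already $m_0=\delta_0$ gives $\Omega\equiv\psi(0)$, so $\Omega'\equiv0$; more robustly, the admissible choice $\psi(x)=1+x^2/(4M^2)$ gives $\Omega'(\sigma)<0$ for every non-trivial $m_0$. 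For the failure of \eqref{sigma} one picks $\psi$ with a strictly decreasing branch that is as steep as $\psi'(x)^2\le M^{-2}\psi(x)$ permits, together with an initial Dirac mass $\delta_{x_0}$ landing in that branch, so that $\Omega'(\sigma)=\frac{Tx_0}{(1+T\sigma)^2}\abs{\psi'\del{\tfrac{x_0}{1+T\sigma}}}>0$, and calibrates $x_0$, $M$ and the time horizon $T$ so that $\Omega'(\sigma)>1$ for admissible $\sigma$ — which forces one to allow the initial second moment to be large, consistent with the fact that \eqref{X} is established here only conditionally.

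\textbf{Main obstacle.} The crux is precisely this last calibration. The inequality $\psi'(x)^2\le M^{-2}\psi(x)$ is a severe damping of the data, and the very Cauchy--Schwarz estimate that proves \eqref{X} also yields $|\Omega'(\sigma)|\le\frac{T}{(1+T\sigma)^2}\,\Omega(\sigma)^{1/2}$; so to violate \eqref{sigma} one must trade the steepness of $\psi$ against the location and spread of $m_0$ with care, and it is exactly this tension that explains why the validity of \eqref{X} in this example is conditional. Everything else — the differentiability of $\tilde E_T$ and $\Omega$, and the passage between the differential and secant forms of these monotonicity conditions — is routine.
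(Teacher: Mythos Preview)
Your argument for \eqref{X} is essentially the same as the paper's, with the minor refinement of the explicit constant $1/4$ and the integration along a segment (which the paper leaves implicit). Your observation that $m_0=\delta_0$ gives $\Omega\equiv\psi(0)$ and hence $\Omega'\equiv 0$ is a clean way to see that $-$\eqref{sigma} can fail --- simpler than the paper's computation.

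However, your treatment of the failure of \eqref{sigma} is incomplete. You correctly identify that one needs $\Omega'(\sigma)>1$ for some admissible $\sigma$, and you correctly observe in your ``Main obstacle'' paragraph that the constraint $\psi'^2\le M^{-2}\psi$ together with $|x_0|\le M$ makes this delicate. But you stop short of producing concrete $\psi$, $m_0$, $T$ for which $\Omega'>1$ actually occurs; your phrase ``calibrates $x_0$, $M$ and the time horizon $T$'' is not a proof, and the remark that this ``forces one to allow the initial second moment to be large'' is at best ambiguous (the moment must stay $\le M^2$). So the second half of the proposition remains unproved in your attempt. The paper fills this gap with an explicit choice: $\psi(x)=1+\del{\tfrac{x}{2M}-a}^2$ (which satisfies $\psi'^2=M^{-2}(\psi-1)\le M^{-2}\psi$), $m_0=\delta_M$, and $a$ a free parameter. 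Then
\[
\Omega'(\sigma)=\del{a-\tfrac{1}{2(1+T\sigma)}}\tfrac{T}{(1+T\sigma)^2},
\]
and for $\sigma$ near $1$ this exceeds $1$ once $a$ is taken large enough relative to $T$. The point you were circling around --- that the bound $|\Omega'|\le\tfrac{T}{(1+T\sigma)^2}\Omega(\sigma)^{1/2}$ does \emph{not} preclude $\Omega'>1$, because $\Omega(\sigma)\approx a^2$ can be made arbitrarily large by shifting $\psi$ --- is exactly what makes the example work, but it needs to be carried out rather than merely gestured at.
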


\begin{proof}
We notice that \eqref{eq:hat E1 1} becomes
\begin{equation} \label{eq:hat E1 2}
	\tilde E_T(X) = \frac{X_0}{1 + T\bb{E}[\psi(X)]},
\end{equation}
and \eqref{eq:E2 1}  becomes
\begin{equation} \label{eq:E2 2}
	\Omega(\sigma) = \int_{\R} \psi\del{\frac{x}{1+T\sigma}}\dif m_0(x).
\end{equation}
Differentiate \eqref{eq:hat E1 2} to get
\begin{equation}
	D\tilde E_T(X)Y = -\frac{X_0}{\del{1 + T\bb{E}[\psi(X)]}^2}T\bb{E}[\psi'(X)Y].
\end{equation}
We use the assumption $\bb{E}[X_0^2] \leq M^2$ to get
\begin{equation}
	\bb{E}\sbr{\del{D\tilde E_T(X)Y}Y} = -\frac{T\bb{E}[X_0Y]\bb{E}[\psi'(X)Y]}{\del{1 + T\bb{E}[\psi(X)]}^2}
	\leq \frac{TM\bb{E}[\psi'(X)^2]^{1/2}}{\del{1 + T\bb{E}[\psi(X)]}^2}\bb{E}[Y^2].
\end{equation}
We now use the fact that $\psi'(x)^2 \leq \frac{1}{M^2}\psi(x)$ and $\psi(x) \geq 1$ to deduce
\begin{equation}
	\bb{E}\sbr{\del{D\tilde E_T(X)Y}Y}
	\leq \frac{T\bb{E}[\psi(X)]^{1/2}}{\del{1 + T\bb{E}[\psi(X)]}^2}\bb{E}[Y^2]
	< \bb{E}[Y^2].
\end{equation}
It follows that \eqref{X} holds (strictly).
Note that this is unconditional on $T$, and the only condition on $m_0$ is the moment condition $\int_{\R} x^2 \dif m_0(x) \leq M^2$.

We now show that \eqref{sigma} need not hold in general. For this, consider $\psi(x) = 1 + \del{\frac{x}{2M} - a}^2$, where $a\in\R$ is a given parameter. It is immediate to see that $\psi$ satisfies the assumptions.
We will take $m_0 = \delta_M$, a Dirac mass concentrated at $M$.
We rewrite \eqref{eq:E2 2} and to get
\begin{equation*}
	\Omega(\sigma) = 1 + \del{\frac{1}{2(1+T\sigma)} - a}^2 \quad \Rightarrow \quad \Omega'(\sigma) = \del{a- \frac{1}{2(1+T\sigma)}}\frac{T}{2(1+T\sigma)^2}.
\end{equation*}
Direct computation yields that when restricted to the set of measures having second moments uniformly bounded by $M^2$, $\range\sigma_T=\left[1,\max\{1+a^2,1+(a-1/2)^2\}\right]$.

Suppose that $|a|\ge|a-1/2|$, then we find that $\lim_{\sigma \to 1+a^2} \Omega'(\sigma) < 1$, so for values of $\sigma$ sufficiently close to $1+a^2$, we have $\Omega'(\sigma) < 1$.

Similarly, if $|a|<|a-1/2|$, we find that $\lim_{\sigma \to 1+(a-1/2)^2} \Omega'(\sigma) < 1$, so for values of $\sigma$ sufficiently close to $1+(a-1/2)^2$, we have $\Omega'(\sigma) < 1$

This shows that the monotonicity condition \eqref{sigma} fails.

To show that -\eqref{sigma} fails, we need to impose a condition on $a$ and $T$, e.g.
	\begin{equation}
		a > \frac{2(1+T)^2}{T} + \frac{1}{2(1+T)}.
	\end{equation}
	Then it follows that $\Omega'(\sigma) > 1$ for $\sigma \geq 1$ close enough to 1.
	From here we indeed see that -\eqref{sigma} does not hold.
\end{proof}

\subsection{\eqref{sigma} does not imply \eqref{LL}, \eqref{D}, or \eqref{X}} \label{sec:sigma does not imply others}
In this subsection we restrict our attention to a special class of data $G$. First, we have the following result.
\begin{lemma}\label{lem:sigmanot}
Consider $G:\R^d\times\sP_2(\R^d)\to\R$ to have the form $G(x,m) = x \cdot \sigma_T(m)$, where $\sigma_T:\sr{P}_2(\R^d) \to \bb{R}^d$.
Conditions \eqref{LL}, \eqref{D} and \eqref{X} are equivalent.
\end{lemma}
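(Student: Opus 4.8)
The plan is to compute each of \eqref{LL}, \eqref{D}, and \eqref{X} explicitly for $G(x,m) = x\cdot\sigma_T(m)$ and to observe that all three collapse to one and the same scalar condition on the barycenters of the measures. Throughout write $\bar m := \int_{\R^d} x\,\dif m(x)$ for the barycenter of $m\in\sP_2(\R^d)$, and for $m_1,m_2\in\sP_2(\R^d)$ set
\[
A(m_1,m_2) := \bigl(\sigma_T(m_1)-\sigma_T(m_2)\bigr)\cdot(\bar m_1-\bar m_2).
\]
I claim each of the three conditions is equivalent to the statement that $A(m_1,m_2)\ge 0$ for all $m_1,m_2\in\sP_2(\R^d)$.

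First I would dispatch \eqref{LL} and \eqref{D}. Since $G(x,m_1)-G(x,m_2)=x\cdot(\sigma_T(m_1)-\sigma_T(m_2))$ is affine in $x$, integrating against $m_1-m_2$ gives $\int_{\R^d}(G(x,m_1)-G(x,m_2))\,\dif(m_1-m_2)(x)=A(m_1,m_2)$, so \eqref{LL} for $G$ says precisely $A\ge 0$ everywhere. For \eqref{D}, note $D_xG(x,m)=\sigma_T(m)$ is independent of $x$; hence for $X_1,X_2\in\bb{H}$ the deterministic vectors $\sigma_T(\s{L}_{X_i})$ factor out of the expectation and
\[
\bb{E}\bigl[(D_xG(X_1,\s{L}_{X_1})-D_xG(X_2,\s{L}_{X_2}))\cdot(X_1-X_2)\bigr]=(\sigma_T(\s{L}_{X_1})-\sigma_T(\s{L}_{X_2}))\cdot\bb{E}[X_1-X_2]=A(\s{L}_{X_1},\s{L}_{X_2}),
\]
using that $\bb{E}[X_i]$ is the barycenter of $\s{L}_{X_i}$. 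Since every pair $(m_1,m_2)$ is realized as $(\s{L}_{X_1},\s{L}_{X_2})$ for suitable $X_1,X_2\in\bb{H}$, \eqref{D} is again equivalent to $A\ge 0$ everywhere, so \eqref{LL}$\Leftrightarrow$\eqref{D}.

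Next I would treat \eqref{X}. Because $D_xG(\cdot,m)\equiv\sigma_T(m)$ is a constant vector field, $I+TD_xG(\cdot,m)$ is the translation $x\mapsto x+T\sigma_T(m)$, a bijection with inverse $x\mapsto x-T\sigma_T(m)$; hence, by the reformulation of $\tilde E_T$ recorded at the beginning of Section \ref{sec:no implications} (valid since $L(x,v)=\frac12\abs{v}^2$ there), for any fixed initial random variable $X_0$ with $\s{L}_{X_0}=m_0$ the operator $\tilde E_T$ is single-valued with $\tilde E_T(X)=X_0-T\sigma_T(\s{L}_X)$. For $X_1,X_2\in\bb{H}$ the initial datum cancels in the difference, $\tilde E_T(X_1)-\tilde E_T(X_2)=-T(\sigma_T(\s{L}_{X_1})-\sigma_T(\s{L}_{X_2}))$, so
\[
\bb{E}\bigl[(\tilde E_T(X_1)-\tilde E_T(X_2))\cdot(X_1-X_2)\bigr]=-T\,A(\s{L}_{X_1},\s{L}_{X_2}).
\]
Thus \eqref{X} amounts to the requirement that $-T\,A(m_1,m_2)\le\bb{E}|X_1-X_2|^2$ for all $T>0$, all $m_0$, and all $X_i\sim m_i$. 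If $A(m_1,m_2)\ge 0$ this is automatic; conversely, if $A(m_1,m_2)<0$ for some pair $m_1,m_2$, then fixing any $X_i\sim m_i$ and letting $T\to\infty$ makes the left-hand side tend to $+\infty$ while the right-hand side stays fixed, a contradiction. Hence \eqref{X} is also equivalent to $A\ge 0$ everywhere, completing the chain of equivalences.

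The only genuinely delicate point is this last equivalence: it is essential that \eqref{X} be imposed for \emph{all} time horizons $T>0$ (for a single fixed $T$ the inequality $-T\,A(m_1,m_2)\le W_2(m_1,m_2)^2$ is strictly weaker than $A\ge 0$), and one should record that the dependence on $m_0$ disappears because $X_0$ cancels in the difference $\tilde E_T(X_1)-\tilde E_T(X_2)$. Everything else is a direct computation once one identifies the barycenter as the only feature of the measure that enters.
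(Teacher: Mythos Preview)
Your proof is correct and follows essentially the same route as the paper's: both reduce each of \eqref{LL}, \eqref{D}, and \eqref{X} to the single condition $(\sigma_T(m_1)-\sigma_T(m_2))\cdot(\bar m_1-\bar m_2)\ge 0$, computing $\tilde E_T(X)=X_0-T\sigma_T(\s{L}_X)$ and then letting $T\to\infty$ to extract the equivalence for \eqref{X}. Your explicit remark that $X_0$ cancels in the difference (so the condition is automatically uniform in $m_0$) and that the argument genuinely requires \eqref{X} for all $T>0$ matches the paper's own observation in the remark following the lemma.
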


\begin{remark}
	In Lemma \ref{lem:sigmanot}, it is crucial that \eqref{X} be required to hold independently of the time horizon $T$.
	Otherwise, the equivalence may not hold.
\end{remark}

\begin{proof}
By definition it is immediate to see that \eqref{LL} and \eqref{D} are both equivalent to the condition
\begin{equation} \label{eq:LLD1}
	\del{\sigma_T(m_1) - \sigma_T(m_2)} \cdot\int_{\bb{R}^d} x \dif\, (m_1 - m_2)(x) \geq 0 \quad \forall m_1,m_2\in\sP_2(\R^d).
\end{equation}
Now let us consider condition \eqref{X}.
The optimal trajectory starting at $x$ finishes at $y = x - T\sigma_T(m)$, so that $E_T(m)$ is a single-valued function
\begin{equation*}
	E_T(m) = (x \mapsto x - T\sigma_T(m))_\sharp m_0.
\end{equation*}
The lifted version is
\begin{equation*}
	\tilde E_T(X) = X_0 - T\sigma_T(\s{L}_X),
\end{equation*}
where $X_0$ is any random variable whose law is $m_0$.
Condition \eqref{X} becomes
\begin{equation} \label{eq:X1}
	-T \del{\sigma_T(\s{L}_{X_1}) - \sigma_T(\s{L}_{X_2})} \cdot\bb{E}\sbr{X_1 - X_2} \leq \bb{E}\abs{X_1 - X_2}^2 \quad \forall X_1, X_2 \in \bb{H}.
\end{equation}
For \eqref{eq:X1} to hold independently of $T$, as required by condition \eqref{X}, we must have
\begin{equation}
	\del{\sigma_T(\s{L}_{X_1}) - \sigma_T(\s{L}_{X_2})}\cdot \bb{E}\sbr{X_1 - X_2} \geq 0 \quad \forall X_1, X_2 \in \bb{H},
\end{equation}
which is the same as \eqref{eq:LLD1}.
\end{proof}

\begin{proposition} \label{pr:sigma not LLDX}
Let $G$ be as in Lemma \ref{lem:sigmanot}. Then, in general \eqref{sigma} does not imply any of the conditions \eqref{LL}, \eqref{D} or \eqref{X}.
\end{proposition}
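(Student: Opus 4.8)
The plan is to lean on Lemma~\ref{lem:sigmanot}: for data of the form $G(x,m)=x\cdot\sigma_T(m)$ the three conditions \eqref{LL}, \eqref{D} and \eqref{X} are all equivalent to the single bilinear inequality \eqref{eq:LLD1}, so it suffices to exhibit one choice of $\sigma_T$ for which \eqref{sigma} holds while \eqref{eq:LLD1} fails. I would work in dimension $d=1$ (so that the parameter space is $\mathcal{X}=\R$) and set $\sigma_T(m):=\int_{\R}\psi\,\dif m$ for a bounded, $\s{C}^1$, strictly increasing, non-affine function $\psi$ --- for concreteness $\psi=\arctan$, so that $G(x,m)=x\int_{\R}\arctan(y)\,\dif m(y)$. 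Boundedness makes $\sigma_T$ well defined on $\sP_2(\R)$ and $G$ as regular as the statements require.

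First I would verify \eqref{sigma}. With the quadratic Lagrangian $L(x,v)=\tfrac12|v|^2$ in force throughout Section~\ref{sec:no implications}, the computation from the proof of Lemma~\ref{lem:sigmanot} shows that for a frozen parameter $\sigma$ the unique optimal trajectory from $x$ is the straight line to $x-T\sigma$ (the function $y\mapsto\tfrac{|x-y|^2}{2T}+y\sigma$ has a single minimizer), hence $\mathcal{E}_T$ is the single-valued map $\mathcal{E}_T(\sigma)=\sigma_T\big((\cdot-T\sigma)_\sharp m_0\big)=\int_{\R}\psi(x-T\sigma)\,\dif m_0(x)$. Since $\psi$ is strictly increasing and $T>0$, for $\sigma_1<\sigma_2$ we have $\psi(x-T\sigma_1)>\psi(x-T\sigma_2)$ pointwise in $x$, so $\mathcal{E}_T$ is strictly decreasing on $\R$ --- a fortiori on $\range\Sigma_T$ --- for every $m_0\in\sP_2(\R)$ and every $T>0$. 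Consequently $(\tau_1-\tau_2)(\sigma_1-\sigma_2)<0\le(\sigma_1-\sigma_2)^2$ whenever $\sigma_1\ne\sigma_2$, which is precisely the strict form of \eqref{sigma}, uniformly in the initial measure and the horizon.

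Next I would violate \eqref{eq:LLD1}, which in the present situation reads $\big(\int_{\R}\psi\,\dif(m_1-m_2)\big)\big(\int_{\R}x\,\dif(m_1-m_2)\big)\ge0$ for all $m_1,m_2\in\sP_2(\R)$. Because $\psi$ is not affine there exist $a<c$ and $\lambda\in(0,1)$ such that, with $b:=\lambda a+(1-\lambda)c$, one has $\psi(b)\ne\lambda\psi(a)+(1-\lambda)\psi(c)$; after interchanging the two constructions below we may assume $\psi(b)<\lambda\psi(a)+(1-\lambda)\psi(c)$. Take $m_2:=\lambda\delta_a+(1-\lambda)\delta_c$ and $m_1:=\delta_{b'}$ with $b'>b$ chosen close enough to $b$ that $\psi(b')<\lambda\psi(a)+(1-\lambda)\psi(c)$ still holds. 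Then $\int_{\R}x\,\dif(m_1-m_2)=b'-b>0$ whereas $\int_{\R}\psi\,\dif(m_1-m_2)=\psi(b')-\lambda\psi(a)-(1-\lambda)\psi(c)<0$, so the product is strictly negative and \eqref{eq:LLD1} fails; by Lemma~\ref{lem:sigmanot} none of \eqref{LL}, \eqref{D}, \eqref{X} can hold. (If instead $\psi(b)>\lambda\psi(a)+(1-\lambda)\psi(c)$ one takes $b'<b$; and, by the sign-reversed version of Lemma~\ref{lem:sigmanot}, testing the reversed bilinear inequality against two Dirac masses $\delta_a,\delta_c$ with $a>c$ shows that the anti-monotone versions -\eqref{LL}, -\eqref{D}, -\eqref{X} fail as well.)

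The argument is short once Lemma~\ref{lem:sigmanot} is available; the only steps that demand a little care are (i) confirming that strict monotonicity of $\psi$ forces $\mathcal{E}_T$ to be strictly monotone \emph{uniformly} in $m_0$ and $T$ --- which is immediate, since both the pushforward by $x\mapsto x-T\sigma$ and integration against $m_0$ preserve monotonicity in $\sigma$ --- and (ii) arranging $\psi$ so that the bilinear inequality \eqref{eq:LLD1} genuinely fails, which reduces to nothing more than $\psi$ being strictly increasing yet not affine, so that some interior point escapes the relevant secant line. I expect (ii) to be the only place requiring attentiveness, and it is handled by the three-point test above; this is also exactly the argument referenced in the remark preceding Theorem~\ref{thm:special case}.
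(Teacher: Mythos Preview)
Your proof is correct and follows essentially the same route as the paper's: both take $G(x,m)=x\cdot\int\psi\,\dif m$ with $\psi$ strictly increasing but not affine, both verify \eqref{sigma} by computing $\mathcal{E}_T(\sigma)=\int\psi(x-T\sigma)\,\dif m_0(x)$ and observing it is decreasing, and both invoke Lemma~\ref{lem:sigmanot} to reduce the failure of \eqref{LL}, \eqref{D}, \eqref{X} to the failure of the single bilinear inequality \eqref{eq:LLD1}. The only genuine difference lies in how \eqref{eq:LLD1} is violated: the paper passes to the differential form $\bb{E}[\psi'(X)Y]\,\bb{E}[Y]\ge 0$ and constructs a two-point random pair $(X,Y)$ exploiting $\psi'(x_1)\ne\psi'(x_2)$, whereas you work directly with the integral form using a three-point secant argument (two Diracs versus one slightly shifted Dirac). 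Your version is arguably more elementary since it avoids differentiation and needs only continuity and non-affinity of $\psi$; the paper's version is marginally cleaner algebraically once the derivative form is granted. Either way the content is the same.
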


\begin{proof}
For concreteness, we will assume that $\sigma_T(m) = \int_{\bb{R}^d} \psi \dif m$ for some $\psi:\bb{R}^d \to \bb{R}^d$.
Then \eqref{eq:LLD1} becomes
\begin{equation} \label{eq:LLD2}
	\int_{\bb{R}^d} \psi(x) \dif\,(m_1 - m_2)(x) \cdot \int_{\bb{R}^d} x \dif\, (m_1 - m_2)(x) \geq 0 \quad \forall m_1,m_2\in\sP_2(\R^d).
\end{equation}

Let us now examine the condition \eqref{sigma}.
To see this, write $g(x,\sigma) = x \cdot \sigma$ so that $G(x,m) = g(x,\sigma_T(m))$.
Observe that
\begin{equation}
	\Omega(\sigma) = \sigma_T\del{(x \mapsto x - T\sigma)_\sharp m_0} = \int_{\bb{R}^d} \psi(x-T\sigma)\dif m_0(x).
\end{equation}
Equation \eqref{sigma} is now equivalent to
\begin{equation} \label{eq:sigma1}
	\int_{\bb{R}^d} \del{\psi(x-T\sigma_1) - \psi(x-T\sigma_2)} \cdot (\sigma_1 - \sigma_2)\dif m_0(x)
	\leq \abs{\sigma_1 - \sigma_2}^2 \quad \forall \sigma_1,\sigma_2 \in \bb{R}^d.
\end{equation}
For \eqref{eq:sigma1} to hold without conditions on $m_0$ and $T$, it is necessary and sufficient for $\psi$ to be a monotone vector field, i.e.
\begin{equation} \label{eq:psi monotone}
	\del{\psi(\sigma_1) - \psi(\sigma_2)} \cdot (\sigma_1 - \sigma_2) \geq 0 \quad \forall \sigma_1,\sigma_2 \in \bb{R}^d.
\end{equation}
To see that \eqref{eq:psi monotone} is necessary, in \eqref{eq:sigma1} let $m_0 = \delta_0$ be a Dirac mass at the origin, replace $\sigma_i$ with $-\frac{1}{T}\sigma_i$ for $i = 1,2$  and rearrange to get
\begin{equation}
	\del{\psi(\sigma_1) - \psi(\sigma_2)} \cdot (\sigma_1 - \sigma_2) \geq -\frac{1}{T}\abs{\sigma_1 - \sigma_2}^2.
\end{equation}
Then let $T \to \infty$ to get \eqref{eq:psi monotone}.
Conversely, if \eqref{eq:psi monotone} holds, then for arbitrary $T$ and $m_0$ we have
\begin{equation}
	\int_{\bb{R}^d} \del{\psi(x-T\sigma_1) - \psi(x-T\sigma_2)} \cdot (\sigma_1 - \sigma_2)\dif m_0(x)
	\leq 0 \quad \forall \sigma_1,\sigma_2 \in \bb{R}^d,
\end{equation}
which implies \eqref{eq:sigma1}.

We can now construct a simple example to show that the condition \eqref{sigma} may be satisfied even when none of the conditions \eqref{LL}, \eqref{D}, or \eqref{X} are satisfied.
For simplicity we assume the dimension $d = 1$.
It suffices to take any increasing differentiable function $\psi$ such that $\psi'$ is not constant.
In this case the monotonicity of $\psi$ implies \eqref{sigma}.
On the other hand, \eqref{LL}, \eqref{D}, and \eqref{X} are all equivalent to \eqref{eq:LLD2}, which in turn implies
\begin{equation}
	\bb{E}\sbr{\psi'(X)Y}\bb{E}[Y] \geq 0 \quad \forall X,Y \in \bb{H}.
\end{equation}
Let $x_1,x_2 \in \bb{R}$ satisfy $\psi'(x_1) \neq \psi'(x_2)$ and $\psi'(x_1) \neq 0$, and let $(X,Y)$ be equal to $(x_1,-\psi'(x_1)-\psi'(x_2))$ with probability $1/2$ and $(x_2,2\psi'(x_1))$ with probability $1/2$.
Then
\begin{equation}
	\begin{split}
		\bb{E}\sbr{\psi'(X)Y}\bb{E}[Y] &= \frac{1}{4}\del{-\psi'(x_1)^2 -\psi'(x_1)\psi'(x_2) + 2\psi'(x_2)\psi'(x_1)}\del{\psi'(x_1) - \psi'(x_2)}\\
		&= -\frac{1}{4}\psi'(x_1)\del{\psi'(x_1) - \psi'(x_2)}^2 < 0.
	\end{split}
\end{equation}
It follows that none of the conditions \eqref{LL}, \eqref{D}, or \eqref{X} are satisfied.

\end{proof}

\appendix

\section{Non-uniqueness for an optimal control problem} \label{sec:nonunique optimizers}

In this Section, we restate and then prove Theorem \ref{thm:nonunique optimizers}.
\begin{theorem}
	Let $\phi:\bb{R}^d \to \bb{R}$ be a continuous function.
	Assume that $\phi$ is not convex, non-constant, bounded below and has sub-quadratic growth at infinity, i.e.~there exist $C>0$ and $\alpha \in (0,1)$ such that $\phi(x)\le C(1+|x|^{1+\alpha})$ for all $x\in\R^d.$
	Then there exists $t^* > 0$ such that for every $t \geq t^*$, there exists $x \in \bb{R}^d$ such that $\frac{\abs{x-y}^2}{2t} + \phi(y)$ has at least two distinct minimizers.
\end{theorem}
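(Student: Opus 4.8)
The plan is to argue by contradiction: I will show that if, for some arbitrarily large $t$, the minimizer of $y\mapsto \frac{|x-y|^2}{2t}+\phi(y)$ were unique for every $x$, then non-convexity of $\phi$ forces $t$ to be bounded. First I would record the usual facts: coercivity (the penalty is quadratic, $\phi$ is subquadratic) guarantees that minimizers exist for every $x$, and the elementary comparison $\frac{|x-Y|^2}{2t}+\phi(Y)=u(t,x)\le\phi(x)$ together with $|\phi(y)|\le C(1+|y|^{1+\alpha})$ (which is exactly what ``bounded below $+$ subquadratic growth'' provides) yields an a priori estimate of the form $|Y-x|\le C_t(1+|x|)^{(1+\alpha)/2}$ for any minimizer $Y$ at $x$; since $\alpha<1$ this is $o(|x|)$ as $|x|\to\infty$. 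Next, since $\phi$ is not convex, I would fix points $y_0\neq y_1$ and a weight $\theta\in(0,1)$ such that, writing $z:=(1-\theta)y_0+\theta y_1$, the gap $\delta:=\phi(z)-(1-\theta)\phi(y_0)-\theta\phi(y_1)$ is strictly positive. Subtracting from $\phi$ the affine function agreeing with it at $y_0$ and $y_1$ (which changes neither the above growth bound nor, up to a $t$-dependent translation in $x$, the set of minimizers), I may assume $\phi(y_0)=\phi(y_1)=0$ and $\phi(z)=\delta$.

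Now set $t^\ast:=1+\frac{\theta(1-\theta)|y_1-y_0|^2}{2\delta}$, fix $t\ge t^\ast$, and assume for contradiction that the minimizer $Y(t,x)$ is unique for every $x$. A standard stability argument for argmins (closedness of the graph plus the a priori bound, combined with uniqueness) makes $x\mapsto Y(t,x)$ continuous, and the estimate above shows that the continuous map $x\mapsto x-Y(t,x)+z$ sends the closed ball $\overline{B_R(z)}$ into itself once $R$ is large. Brouwer's fixed point theorem then produces $x_\ast$ with $Y(t,x_\ast)=z$; that is, $z$ is the minimizer of $y\mapsto\frac{|x_\ast-y|^2}{2t}+\phi(y)$.

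Comparing this minimizer with $y=y_0$ and $y=y_1$ gives $\frac{|x_\ast-z|^2}{2t}+\delta\le\frac{|x_\ast-y_i|^2}{2t}$ for $i=0,1$; expanding $|x_\ast-y_i|^2-|x_\ast-z|^2=(z-y_i)\cdot(2x_\ast-y_i-z)$ and using $z-y_0=\theta(y_1-y_0)$, $z-y_1=-(1-\theta)(y_1-y_0)$, I obtain
\begin{equation*}
(y_1-y_0)\cdot(2x_\ast-y_0-z)\ \ge\ \frac{2t\delta}{\theta},\qquad (y_1-y_0)\cdot(2x_\ast-y_1-z)\ \le\ -\frac{2t\delta}{1-\theta}.
\end{equation*}
Subtracting, the left-hand sides differ by exactly $(y_1-y_0)\cdot(y_1-y_0)=|y_1-y_0|^2$, so $|y_1-y_0|^2\ge\frac{2t\delta}{\theta(1-\theta)}$, i.e.\ $t\le\frac{\theta(1-\theta)|y_1-y_0|^2}{2\delta}<t^\ast$, contradicting $t\ge t^\ast$. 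Hence for every $t\ge t^\ast$ the minimizer fails to be unique at some $x$, which (minimizers always existing) is the assertion.

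The routine parts are the growth/coercivity estimates and the optimality inequalities of the last step. The one point requiring care is that non-uniqueness must be produced at a precise location: the natural candidate is the ``bump point'' $z$ of the non-convexity, and to run the optimality inequalities one needs $z$ to be an honest minimizer for \emph{some} $x$. Exhibiting such an $x$ is what calls for a topological (Brouwer, or degree-theoretic) argument, and it is here — via the a priori control on minimizers — that the subquadratic-growth hypothesis is used in an essential way; this is also the feature that makes ``$t$ large'' unavoidable, since for small $t$ a non-convex but mildly-concave $\phi$ (e.g.\ a smoothed $\arctan$) does have unique minimizers everywhere.
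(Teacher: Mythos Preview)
Your argument is correct and takes a genuinely different route from the paper's proof. The paper first observes that for $t$ large the function $\psi(x)=\frac{|x|^2}{2t}+\phi(x)$ is itself non-convex, then reduces the problem to finding a linear tilt $a\in\bb{R}^d$ for which $a\cdot x+\psi(x)$ has two minimizers. To locate such an $a$ it builds an auxiliary map $F:\bb{R}^d\to\bb{R}^d$ whose $i$th component compares the infima of $a\cdot x+\psi(x)$ over the half-spaces $\{x_i\ge 0\}$ and $\{x_i\le 0\}$, checks that $F$ is continuous, monotone, and coercive (this last step is where the sub-quadratic growth enters, via somewhat lengthy estimates), and invokes the Browder--Minty theorem to produce a zero of $F$; a short final argument rules out a single minimizer at the origin.

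Your approach is by contradiction and more direct: under the assumption of unique minimizers you make the argmin map continuous, and then Brouwer's fixed-point theorem (rather than Browder--Minty) forces the non-convexity ``bump'' point $z$ to actually be a minimizer for some $x_\ast$. The optimality inequalities at $y_0,y_1$ then yield a clean quantitative bound $t\le \frac{\theta(1-\theta)|y_1-y_0|^2}{2\delta}$, giving an explicit $t^\ast$ in terms of the convexity defect---something the paper's argument does not produce. The sub-quadratic growth is used in the same place (to control $|x-Y(t,x)|$ and make the self-map land in a large ball), but your estimates are lighter. One small point worth making explicit in your write-up: after subtracting the affine function, $\phi$ need no longer be bounded below, only bounded below by an affine function; the a~priori estimate $|x-Y|=o(|x|)$ still goes through, but the constant and the absorption of the extra $|a||Y|$ term deserve one line.
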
	

\begin{proof}
	\firststep
	In this step, we show that there exists $t^* > 0$ such that if $t \geq t^*$, then the function $\frac{\abs{x}^2}{2t} + \phi(x)$ is not convex.
	Since $\phi$ is not convex, there exist $x_0, h \in \bb{R}^d$ such that
	\begin{equation}
		\phi(x_0) > \frac{1}{2}\del{\phi(x_0 + h) + \phi(x_0 - h)}.
	\end{equation}
	Let $x \in \bb{R}^d$.
	If $\psi(x) = \frac{\abs{x}^2}{2t} + \phi(x)$, then
	\begin{equation*}
		\psi(x_0) - \frac{1}{2}\del{\psi(x_0 + h) + \psi(x_0 - h)} = \phi(x_0)- \frac{1}{2}\del{\phi(x_0 + h) + \phi(x_0 - h)} - \frac{\abs{h}^2}{2t}.
	\end{equation*}
	Taking $t^*$ large enough, we see that the right-hand side is positive for any $t \geq t^*$, which means that $\psi$ is not convex.
	
	\nextstep From now one we fix $t \geq t^*$ and set $\psi(x) = \frac{\abs{x}^2}{2t} + \phi(x)$.
	Note that
	\begin{equation*}
		\frac{\abs{x-y}^2}{2t} + \phi(y) = \frac{\abs{x}^2}{2t} - \frac{1}{t}x \cdot y + \psi(y).
	\end{equation*}
	To show that $\frac{\abs{x-y}^2}{2t} + \phi(y)$ has at least two distinct minimizers, it is enough to show that the same is true of $- \frac{1}{t}x \cdot y + \psi(y)$.
	Thus, to reach the desired conclusion, it is enough to prove the following claim:
	\begin{quote}
		There exists $a \in \bb{R}^d$ such that $a \cdot x + \psi(x)$ has at least two distinct minimizers.
	\end{quote}
	Recall that $\psi$ is not convex, so there exist $x_0,h \in \bb{R}^d$ (the ones found before) such that
	\begin{equation*}
		\psi(x_0) > \frac{1}{2}\del{\psi(x_0 + h) + \psi(x_0 - h)}.
	\end{equation*}
	Define
	\begin{equation}
		\tilde\psi(x) := \psi(x + x_0) - \psi(x_0) - b \cdot x, \quad b := \frac{\psi(x_0 + h) - \psi(x_0 - h)}{2\abs{h}^2}h.
	\end{equation}
	Then $\tilde \psi(0) = 0$ and $\tilde \psi(h) = \tilde{\psi}(-h) < 0$.
	Note that $x$ is a minimizer of $a\cdot x + \psi(x)$ if and only if $x-x_0$ is a minimizer of $(a+b)\cdot x + \tilde\psi(x)$, so it is enough to prove the claim with $\psi$ replaced with $\tilde \psi$.
	Thus, without loss of generality, $\psi(0) = 0$ and $\psi(-h) = \psi(h) < 0$ for some $h \in \bb{R}^d$.
	
	\nextstep Define $F:\bb{R}^d \to \bb{R}^d$ as follows.
	For each $i = 1,\ldots,d$ and $a\in\R^d$ let
	\begin{equation*}
		F_i^{\pm}(a) = \min\cbr{a \cdot x + \psi(x) : \pm x_i \geq 0}, \quad F_i(a) = F_i^{+}(a) - F_i^-(a).
	\end{equation*}
	Note that $F_i^{\pm}$ is well-defined because $a \cdot x + \psi(x) \to \infty$ as $\abs{x} \to \infty$, so it suffices to search for minimizers on a compact set.
	It is straightforward to see that $F$ is continuous.
	We now claim that it is coercive, i.e.
	\begin{equation} \label{eq:F coercive}
		\lim_{\abs{a} \to \infty} \frac{F(a) \cdot a}{\abs{a}} = \infty.
	\end{equation}
	For any $x \in \bb{R}^d$ we will use the notation $x_{-i}$ to mean the vector in $\bb{R}^{d-1}$ obtained by removing the $i$th coordinate from $x$.
	If $a_i \geq 0$, then $a \cdot x \geq a_{-i} \cdot x_{-i}$ whenever $x_i \geq 0$ and so
	\begin{equation*}
		F_i^{+}(a) \geq \inf \phi + \min\cbr{a_{-i} \cdot x_{-i} + \frac{\abs{x_{-i}}^2}{2t} : x_{-i} \in \bb{R}^{d-1}} = \inf \phi - \frac{t\abs{a_{-i}}^2}{2}.
	\end{equation*}
	On the other hand, by setting $x_{-i} = -ta_{-i}$ and $x_i = -s\abs{a}^\alpha$, with $s > 0$ to be determined, we get the estimate
	\begin{equation*}
		F_i^-(a) \leq C \cdot 2^\alpha(1+t^{1+\alpha}|a|^{1+\alpha} + s^{1+\alpha}|a|^{(1+\alpha)\alpha}) - s\abs{a}^\alpha a_i + \frac{s^2\abs{a}^{2\alpha}}{2t} - \frac{t\abs{a_{-i}}^2}{2}.
	\end{equation*}
	Subtracting these two inequalities, we get
	\begin{equation*}
		F_i(a) \geq \inf \phi - C \cdot 2^\alpha(1+t^{1+\alpha}|a|^{1+\alpha} + s^{1+\alpha}|a|^{(1+\alpha)\alpha}) - \frac{s^2}{2t}\abs{a}^{2\alpha} + s\abs{a}^{\alpha}a_i.
	\end{equation*}
	If $a_i \leq 0$, then apply the mirror image of this argument to get
	\begin{equation*}
		F_i(a) \leq -\inf \phi + C \cdot 2^\alpha(1+t^{1+\alpha}|a|^{1+\alpha} + s^{1+\alpha}|a|^{(1+\alpha)\alpha}) - \frac{s^2}{2t}\abs{a}^{2\alpha} + s\abs{a}^{\alpha}a_i.
	\end{equation*}
	In either case, we can conclude that
	\begin{equation*}
		\begin{split}
			F_i(a)a_i &\geq s|a|^\alpha a_i^2 -\del{-\inf \phi + C \cdot 2^\alpha(1+t^{1+\alpha}|a|^{1+\alpha} + s^{1+\alpha}|a|^{(1+\alpha)\alpha}) + \frac{s^2}{2t}\abs{a}^{2\alpha}}\abs{a_i}\\
		\Rightarrow
		F(a)\cdot a &\geq s\abs{a}^{2+\alpha}  - \del{-\inf \phi + C \cdot 2^\alpha(1+t^{1+\alpha}|a|^{1+\alpha} + s^{1+\alpha}|a|^{(1+\alpha)\alpha}) + \frac{s^2}{2t}\abs{a}^{2\alpha}}d^{1/2}\abs{a}\\
		&= \del{s - C \cdot 2^\alpha t^{1+\alpha}d^{1/2}}\abs{a}^{2+\alpha}  - \del{-\inf \phi + C \cdot 2^\alpha(1+s^{1+\alpha}|a|^{(1+\alpha)\alpha}) + \frac{s^2}{2t}\abs{a}^{2\alpha}}d^{1/2}\abs{a}\\
		\Rightarrow
		\frac{F(a)\cdot a}{|a|} &\geq \del{s - C \cdot 2^\alpha t^{1+\alpha}d^{1/2}}\abs{a}^{1+\alpha}  - \del{-\inf \phi + C \cdot 2^\alpha(1+s^{1+\alpha}|a|^{(1+\alpha)\alpha}) + \frac{s^2}{2t}\abs{a}^{2\alpha}}d^{1/2}.
		\end{split}
	\end{equation*}
	Choosing some $s > Ct^{1+\alpha}d^{1/2}$, and noticing that $1+\alpha > \max\cbr{2\alpha,(1+\alpha)\alpha}$, we then deduce \eqref{eq:F coercive}.
	
	\nextstep As each coordinate function $F_i$ is non-decreasing in the $a_i$ variable, the map $F$ is monotone. As it is also continuous and coercive, by the Browder--Minty Theorem (see for instance \cite[Theorem 3]{browder-minty}), there exists an $a \in \bb{R}^d$ such that $F(a) = 0$, hence $F_i(a) = F_i^-(a)$ for each $i = 1,\ldots,d$.
	We can conclude that all of these values are in fact the minimum of $a \cdot x + \psi(x)$, and that there exist minimizers $x^{\pm,i}$ such that $\pm x^{\pm,i}_i \geq 0$.
	If any two of these $2d$ minimizers are distinct, we are done.
	Suppose they are all identical.
	In this case they must all be 0 (every coordinate must be both non-negative and non-positive).
	Thus the minimum of $a \cdot x + \psi(x)$ is attained at 0 and is therefore equal to 0 (as $\psi(0)=0$).
	But notice that, since $\psi(h) = \psi(-h) < 0$, we must have either $a \cdot h + \psi(h) < 0$ or $a \cdot (-h) + \psi(-h) < 0$.
	This is a contradiction.
	Therefore, $a \cdot x + \psi(x)$ must have at least two distinct minimizers, which is what we needed to show.
\end{proof}
	
\noindent {\bf Acknowledgements.} PJG acknowledges the support of the National Science Foundation through NSF Grants DMS-2045027 and DMS-1905449. We acknowledge the support of the Heilbronn Institute for Mathematical Research and the UKRI/EPSRC Additional Funding Programme for Mathematical Sciences through the focused research grant ``The master equation in Mean Field Games''. ARM has also been partially supported by the EPSRC via the NIA with grant number EP/X020320/1	 and by the King Abdullah University of Science and Technology Research Funding (KRF) under Award No. ORA-2021-CRG10-4674.2. Last but not least, we would like to thank the referee for their careful reading of our manuscript and for the important comments they gave.
	
\medskip

\noindent {\bf Conflict of interest.} The authors declare that they do not have any conflicts of interests.	

\medskip

\noindent {\bf Data Availability Statement.} Data sharing not applicable to this article as no datasets were generated or analyzed during the current study.	
	
\bibliographystyle{plain}	
\bibliography{mybib}

\end{document}